\newcommand{\al}{\alpha}
\newcommand{\be}{\beta}
\newcommand{\la}{\lambda}
\newcommand{\de}{\delta}
\newcommand{\si}{\sigma}
\newcommand{\ga}{\gamma}
\newcommand{\om}{\omega}
\newcommand{\Om}{\Omega}
\newcommand{\ka}{\kappa}
\newtheorem{proposition}{Proposition}[section]
\newtheorem{definition}{Definition}[section]
\newtheorem{lemma}{Lemma}[section]
\newtheorem{theorem}{Theorem}[section]
\theoremstyle{remark}% \newtheorem{Cor}{Corollary}
\newtheorem{remark}[theorem]{Remark}
\begin{document}

\title[]{Global pathwise solutions of an abstract stochastic equation}

\author{Y.-X. Lin}
\address{Yuan-Xin Lin
\newline\indent
School of Mathematical Sciences, Shanghai Jiao Tong University,
Shanghai, P. R. China}
\email{yuanxinlin@sjtu.edu.cn}

	\author{Y.-G. Wang}
\address{Ya-Guang Wang
	\newline\indent
	School of Mathematical Sciences, Center for Applied Mathematics, MOE-LSC and SHL-MAC, Shanghai Jiao Tong University,
	Shanghai, P. R. China}
\email{ygwang@sjtu.edu.cn  }
\maketitle

\date{}

\begin{abstract} We establish the existence and uniqueness of the maximal pathwise solution for an abstract nonlinear stochastic evolutional equation, which takes the two and three dimensional stochastic Navier-Stokes equations as a typical model, forced by a multiplicative white noise, and show that the pathwise solution exists globally in time in a positive probability when the initial data is sufficiently small. Moreover, a global pathwise solution is obtained for the stochastic Navier-Stokes equations defined on torus when the data is properly regular and small.
\end{abstract}
~\\
{\textbf{\scriptsize{2020 Mathematics Subject Classification:}}\scriptsize{ 35Q30, 35R60, 76D03}}.
~\\
\textbf{\scriptsize{Keywords:}} {\scriptsize{Stochastic Navier-Stokes equations, stochastic compactness methods, global pathwise solutions, multiplicative white noise.}}

%++++++++++++++++++++++++++++++++++++++++++++++++++++++++++++++++++++++++++++++++++++++++
%
%
%                              Section 1
%
%
%
%++++++++++++++++++++++++++++++++++++++++++++++++++++++++++++++++++++++++++++++++++++++++++

\section{Introduction}
In this paper, we consider the following initial value problem for an abstract nonlinear stochastic evolutional system which takes the stochastic Navier-Stokes equations as a special case:
\begin{equation}\label{ASE}
    \left\{
    \begin{aligned}
       du+[Au+B(u,u)]dt &=G(u)dW,\\
        u(0) &=u_0,
    \end{aligned}
\right.
\end{equation}
where $u$ is the unknown function, $A$ and $B$ are deterministic operators on $u$, and $W$ is a cylindrical Brownian motion evolving over a separable Hilbert space $\mathcal{H}$. For the detailed descriptions and hypotheses on these terms, see Section \ref{NH}.

When studying the theory of stochastic evolutional system, we usually consider two kinds of solutions. The first one is the martingale (probabilistic weak) solutions, whose underlying stochastic elements can change and are considered as parts of solutions. The second one is the pathwise (probabilistic strong) solutions, whose driving noise is given in advance. From the existence of martingale solutions and the uniqueness of pathwise solutions, we can obtain the existence of pathwise solutions by the classical Yamada–Watanabe lemma.

The well-posedness of the stochastic Navier-Stokes equations was first studied by Bensoussan and Temam in \cite{BT,BT2} in the 1970s. After half century's development, the research results in this field are abundant. If nonlinear multiplicative noise is applied, the problem can be considered in the framework of martingale solutions, see \cite{MV,ABC,MD,Flandoli,RB}, these works mainly consider the weak solution in the sense of PDE, that is the solution evolving in $L^2$. On the other hand, local and global pathwise solutions are considered in \cite{AJ,HB,RB,BP}. Afterwards, authors in \cite{G-H-Z} obtain the existence and uniqueness of the local pathwise solution of the initial boundary value problem when the initial data belongs to $H^1$, and the pathwise solution exists globally in two-dimensional case. The local existence of pathwise solutions for the initial value problem in $\mathbb{R}^3$ has been established in \cite{KIM}, and it proves that the global solution can be obtained under a certain positive probability as long as the initial data is sufficiently small and the driving noise is non-degenerate and multiplicative. Authors in \cite{De-G} obtain the local pathwise solutions to an abstract nonlinear stochastic evolutional system containing the Navier-Stokes equations as a special case, as a consequence, they also give another proof for the local existence result obtained in \cite{G-H-Z}.

The aim of this paper is to extend the work of \cite{De-G} to study the existence of the global pathwise solution for an abstract nonlinear evolutional system in the sense of positive probability. As a consequence, we obtain the existence of the global pathwise  solution for the initial boundary value problem of the stochastic Navier-Stokes equations, it is worth to note that the resulting probability is dependent on the viscosity. Furthermore, by improving the regularity of the initial data we get that the probability is independent of the viscosity. In addition, the hypotheses imposed on the noise in this paper are weaker than those in \cite{De-G}.

The exposition is organized as follows. In Section \ref{PM}, we give some hypotheses about the problem \eqref{ASE}, and state the main results. The existence and uniqueness of the maximal pathwise solution to the problem \eqref{ASE} under the hypotheses on the noise weaker than those in \cite{De-G} will be established in Section \ref{LMPS}. The global pathwise solution to the problem \eqref{ASE} will be obtained in Section \ref{GPS} under some further hypotheses. In Section \ref{NS}, we apply the results of the first two sections to the initial boundary value problem of the stochastic Navier-Stokes equations, and deduce the global pathwise solution under a certain positive probability depending on the viscosity. In Section \ref{NSM}, we study the stochastic Navier-Stokes equations defined on torus $\mathbb{T}^d$ (d=2 or 3) with the initial data being in $H^m$ ($m>\frac{d}{2}$+1) and small, and obtain  the global pathwise solution with a positive probability independent of the viscosity. In the Appendix, we collect several lemmas in stochastic analysis and compact embedding theorems used in this paper, such as the Burkholder-Davis-Gundy inequality, the Gronwall inequality for stochastic processes, etc.

\section{Preliminaries and main results}\label{PM}

To study the initial value problem \eqref{ASE} for an abstract nonlinear stochastic evolutional system, in this section we first give some notations and hypotheses on the operators $A$, $B(\cdot, \cdot)$ and the noise $G(u)$ appeared in the system \eqref{ASE}, then state the main results of this paper.
 
\subsection{Notations and Hypotheses}\label{NH}
We consider two separable Hilbert spaces $V \subset H$, with the embedding being dense and compact. Denote by $V'$ the dual of $V$, relative to $H$, and $(\cdot , \cdot)$, $((\cdot , \cdot ))$ and $|\cdot|$, $||\cdot||$ the inner products and norms in $H$, $V$ respectively. The duality pair between $V$ and $V'$ is denoted by $\langle \cdot , \cdot \rangle $.

\subsubsection{Hypotheses on operators $A$ and $B(\cdot, \cdot)$}\label{HA}
The hypotheses on the operator $A$ are the same as given in \cite{De-G}. Assume that $A: D(A)\subset H \rightarrow H $ is an unbounded, densely defined, bijective, linear operator such that
\begin{equation}
    ( Au,v ) =((u,v))\,\,\,\,\,\,\,\,\, \mathrm{for} \,\,\,\mathrm{all}\,\,\, u \in D(A),\,\,\,v \in  V,
\end{equation}
from which we know that the linear operator $A$ is symmetric and can be understood as a bounded operator from $V$ to $V'$ with the duality pair given by
\begin{equation}\label{AS}
    \langle Au,v \rangle =((u,v))\,\,\,\,\,\,\,\,\, \mathrm{for} \,\,\,\mathrm{all}\,\,\, u,v \in V.
\end{equation}
We further see that $A^{-1}$, the inverse of $A$, is continuous from $H$ into $V$, thus $A^{-1}$ is a compact operator on $H$, due to the embedding $V\subset H$ being compact. By applying the well known theory in functional analysis for the symmetric compact operator $A^{-1}$, we know that all the eigenfunctions of $A^{-1}$, denoted by $(e_j)_{j \geq 1}$, form an orthogonal base of $H$, and it is easy to verify that they are also eigenfunctions of $A$, that is
$$Ae_j=\la_je_j,\,\,\,\,\,\,\,\,\,e_j\in D(A),$$
and from the above assumptions, we know that the corresponding eigenvalues $(\la_j)_{j \geq 1}$ satisfying
$$ 0<\lambda_1 \leq \lambda_2 \leq \cdots \leq \lambda_n \leq \cdots \,\,\,\,\,\,\,\,\, \mathrm{and} \,\,\,\,\,\,\,\,\, \lim_{n\rightarrow \infty}\la_n=+\infty . $$

Introduce the finite dimensional spaces $H_n=span \{e_1,\cdots e_n\}$, which will be used in the Galerkin approximation later, and denote by $P_n$ and $Q_n=I - P_n$
the projections from $H$ to $H_n$ and the orthogonal complement of $H_n$ respectively.

For any given $\alpha >0$, we define
$$    D(A^\alpha) :=\left\{u\in H : \sum_{k=1}^{\infty}\lambda_k^{2\alpha}|(u,e_k)|^2< \infty\right\}$$
and the operator $A^\alpha : D(A^\alpha) \rightarrow H$ by

  $$  A^\alpha u := \sum_{k=1}^\infty \lambda_k^ \alpha (u,e_k) e_k,$$
equipped with the inner product and norm
   $$ \langle u, v \rangle _\alpha := \sum_{k=1}^\infty \lambda_k^{2\alpha}(u,e_k)(v,e_k),\,\,\,\,\,\,
    |u|_\alpha:=|A^\alpha u|=\left(\sum_{k=1}^\infty \lambda_k^{2\alpha}|(u,e_k)|^2\right)^{\frac{1}{2}}.$$
For any $0<\alpha_1 < \alpha_2$, it is easy to have the following inequalities:
\begin{equation}
    |P_nu|_{\alpha_2} \leq \lambda_n^{\alpha_2 - \alpha_1}|P_nu|_{\alpha_1},\,\,\,\,\,\,|Q_nu|_{\alpha_1} \leq \frac{1}{\lambda_n^{\alpha_2 - \alpha_1}}|Q_nu|_{\alpha_2}.
\end{equation}

Note that we have $D(A^{\frac{1}{2}})=V$ due to \eqref{AS}, and we can verify that the embedding $D(A^{\alpha_2}) \subset D(A^{\alpha_1})$ is compact for any $0< \alpha_1 < \alpha_2$ as in \cite{T}, therefore, we obtain the compact embedding $D(A) \subset V$.

%\subsubsection{Assumptions on the bilinear term}\label{HB}

As in \cite{De-G}, we impose the following assumptions on the bilinear term $B(\cdot , \cdot)$:
\begin{itemize}
    \item[$\bullet$] $B(u,v)$ is continuous from $V\times D(A) $ to $V'$, and satisfies
    \begin{equation}\label{B1}
    \langle B(u, v) , v \rangle =0 , \,\,\,\,\,\,\,\,\, \forall \,\,u\in V,\,\, v \in D(A),
\end{equation}
\begin{equation}\label{B2}
    |\langle B(u,v) , w \rangle| \leq C_B ||u||\,|Av|\, ||w||,\,\,\,\,\,\,\,\,\, \forall \,\,u,w \in V ,\,\, v\in D(A),
\end{equation}
for a constant $C_B>0$.
\item[$\bullet$]$B(u,v)$ is continuous from $D(A)\times D(A)$ to $H$, and satisfies
\begin{equation}\label{B3}
    |(B(u,v),w)|\leq C_B ||u||^{\frac{1}{2}} \, |Au|^{\frac{1}{2}} \, ||v||^{\frac{1}{2}} \, |Av|^{\frac{1}{2}} \, |w| , \,\,\,\,\,\,\,\,\, \forall u,v \in D(A),\,\,w \in H.
\end{equation}
\end{itemize}

%Throughout this paper, we assume that the above hypothesis about $A$ and $B$ are true for the abstract system $\eqref{ASE}_1$.

In order to study the global pathwise solutions, we need to add a further assumption on $A$ and $B(\cdot , \cdot)$ as follows: 
    \begin{equation}\label{HGP}
    |(B(u,u),Au)|\leq C||u||^a\,|Au|^b, \,\,\,\,\,\,\,\,\, \forall u \in D(A),
\end{equation}
with two fixed constants $a,b$ satisfying $0<b<2$, $a+b>2$.

\subsubsection{Hypotheses on the noise}\label{AFN}
  In this subsection, we introduce the assumptions on the stochastic driving term $G(u)dW$ given in \eqref{ASE}.

  We first introduce the concept of stochastic basis. For a given probability space $(\Om,\mathcal{F},\mathbb{P})$ equipped with a complete, right-continuous filtration $\{\mathcal{F}_t\}_{t\geq 0}$, which means that $\mathbb{P}(A)=0$ implies $A\in \mathcal{F}_0$ and $\mathcal{F}_t=\bigcap_{s>t}\mathcal{F}_s$, and $\{\beta_k\}_{k\ge 1}$ a sequence of mutually independent standard real valued  Brownian motions  relative to $\mathcal{F}_t$, we say that the quintuple
  $$\mathcal{S}=\left(\Om,\mathcal{F},\{\mathcal{F}_t\}_{t\geq 0},\mathbb{P},\{\beta_k\}_{k\geq 1}\right),$$
  is a stochastic basis.

  Fix a separable Hilbert space $\mathcal{H}$ with an orthonormal basis $\{h_k\}_{k\geq 1}$, we say that $W$ is a cylindrical Brownian motion evolving over $\mathcal{H}$ if it has the following formal expansion:
  \begin{equation}\label{FE}
      W=\sum_{k=1}^\infty \beta_k h_k.
  \end{equation}
  Fix another separable Hilbert space $\mathcal{U}$, we denote the collection of Hilbert-Schmidt operators between $\mathcal{H}$ and $\mathcal{U}$ by $L_2(\mathcal{H},\mathcal{U})$, which contains all bounded linear operators $A\in L(\mathcal{H},\mathcal{U})$ such that
  $$||A||_{L_2(\mathcal{H},\mathcal{U})}^2:=\sum_{k=1}^\infty||Ah_k||_\mathcal{U}^2< \infty.$$
  For an $\mathcal{U}$-valued predictable process $G \in L^2(\Om;L^2_{loc}([0,\infty),L_2(\mathcal{H},\mathcal{U})))$, one may define the Itô stochastic integral
  \begin{equation}
      M_t:=\int_0^tGdW=\sum_{k=1}^\infty \int_0^t Gh_k \, d\beta_k,
  \end{equation}
  see \cite{GJ} for detail construction.

  One needs to note that the expansion \eqref{FE} does not converge in general in $\mathcal{H}$, to have the convergent, as in \cite{GJ}, we need to introduce a large space $\mathcal{H}_0 \supset \mathcal{H}$ defined by
  $$\mathcal{H}_0:= \left\{ h=\sum_{k\geq 1} a_kh_k : \sum_{k\geq 1} \frac{a_k^2}{k^2}<\infty  \right\} ,$$
  endowed with the usual norm $||h||_{\mathcal{H}_0}:= \left( \sum_{k\geq 1} \frac{a_k^2}{k^2} \right)^{\frac{1}{2}}$. One can obtain that $W \in C([0,\infty); \mathcal{H}_0)$ almost surely.

  Now let's introduce some hypotheses on $G$, first introduce the following notations. For given Banach spaces $\mathcal{X},\mathcal{Y},\mathcal{Z}$ with $\mathcal{X} \subset \mathcal{Z}$, we define
  \begin{equation}
      \begin{aligned}
      Bnd_{u,loc}(\mathcal{X},\mathcal{Y};\mathcal{Z}) := & \{f \in C([0,\infty)\times \mathcal{X}) :  \\
      & ||f(t,x)||_{\mathcal{Y}} \leq  \beta (||x||_{\mathcal{Z}}) (1+||x||_{\mathcal{X}}) , \,\,\,
  \forall x \in \mathcal{X}, t \geq 0\},
      \end{aligned}
  \end{equation}
  where $\beta(\cdot)$ is a positive increasing function being locally bounded and independent of $t$. In addition, we define the space of locally Lipschitz functions by
  \begin{equation}
      \begin{aligned}
       Lip_{u,loc}(\mathcal{X},\mathcal{Y};\mathcal{Z}) := &  \{f \in Bnd_{u,loc}(\mathcal{X},\mathcal{Y};\mathcal{Z}) : \\
 & ||f(t,x)-f(t,y)||_{\mathcal{Y}} \leq  \beta (||x||_{\mathcal{Z}}+||y||_{\mathcal{Z}}) ||x-y||_{\mathcal{X}} , \,\,\forall x,y \in \mathcal{X}, t \geq 0\}.
      \end{aligned}
  \end{equation}
  For the noise term given in the problem \eqref{ASE}, we assume that $G:[0,\infty) \times H \rightarrow L_2(\mathcal{H};H)$ satisfies
    \begin{equation}\label{G-Bnd}
        G\in Bnd_{u,loc}(H,L_2(\mathcal{H},H);H) \cap Bnd_{u,loc}(V,L_2(\mathcal{H},V);H)  \cap Bnd_{u,loc}(D(A),L_2(\mathcal{H},D(A));H),
    \end{equation}
  and
   \begin{equation}\label{G-Lip}
        G\in Lip_{u,loc}(H,L_2(\mathcal{H},H);H) \cap Lip_{u,loc}(V,L_2(\mathcal{H},V);H) \cap Lip_{u,loc}(D(A),L_2(\mathcal{H},D(A));H),
    \end{equation}
  where the condition \eqref{G-Bnd} is used in the proof of existence of martingale solutions and the condition \eqref{G-Lip} is used in the proof of existence of pathwise solutions. Note that the hypotheses imposed on the noise term in \cite{De-G} are the global Lipschitz conditions, but we assume that the noise term satisfies the local Lipschitz conditions only in this paper.

  %Under the hypothesis \eqref{G-Lip} on $G$ it is possible to verify that the stochastic integral $\int_0^tG(u)\, dW$ is well-defined, taking values in $H$ for each $u \in C([0,\infty),H)$ almost surely.

In considering the global pathwise solutions, we further assume that $G(u)$ satisfies
  \begin{equation}\label{G-H1}
    ||((G(u),u))||_{L_2{(\mathcal{H},\mathbb{R})}} ^2 \geq \beta ^2 ||u||^4,
\end{equation}
\begin{equation}\label{G-H2}
    ||G(u)||_{L_2{(\mathcal{H},V)}} ^2 \leq \alpha^2 ||u||^2,
\end{equation}
for two constants $\alpha , \beta$ satisfying
\begin{equation}\label{G-H3}
    \alpha^2 < 2 \beta^2.
\end{equation}

\subsection{Main results}
Before stating the main results, as in \cite{De-G,G-H-Z}, we introduce several definitions for solutions to the problem \eqref{ASE}. The first one is the martingale solution, which is weak in the probability sense, and its stochastic elements may change and be a part of solutions.
\begin{definition}(Local Martingale Solution)
Suppose that $\mu_0$ is a probability measure on $V$. A triple $(\Tilde{\mathcal{S}},\Tilde{u},\Tilde{\tau})$ is a local Martingale solution to the problem \eqref{ASE}, provided that
$\Tilde{\mathcal{S}}=(\Tilde{\Omega},\Tilde{\mathcal{F}},\{ \Tilde{\mathcal{F}_t} \}_{t \geq 0},\Tilde{\mathbb{P}},\Tilde{W})$ is a stochastic basis, $\Tilde{\tau}$ is a $\Tilde{\mathcal{F}_t}$-stopping time and $\Tilde{u}(\cdot)=\Tilde{u}(\cdot
\land \Tilde{\tau}): \Tilde{\Omega} \times [0,\infty)\rightarrow V$ is $\Tilde{\mathcal{F}_t}$-adapted such that

\begin{equation}\label{E1}
\begin{aligned}
&  \Tilde{u}(\cdot \land \Tilde{\tau})\in L^2(\Tilde{\Om};C([0,\infty);V)), \\
&  \Tilde{u}\mathbb{I}_{t\leq \Tilde{\tau}} \in L^2(\Tilde{\Om};L^2_{loc}([0,\infty);D(A))),
\end{aligned}
\end{equation}
the law of $\Tilde{u}_0$ is $\mu_0$, and for any $t \geq 0$ there holds in $H$ that
\begin{equation}\label{E2}
    \Tilde{u}(t\land \Tilde{\tau})+\int_0^{t\land \Tilde{\tau}}(A\Tilde{u}+B(\Tilde{u},\Tilde{u}))ds=\Tilde{u}(0)+\int_0^{t\land \Tilde{\tau}}G(\Tilde{u})d\Tilde{W}.
\end{equation}

\end{definition}

The second one is the pathwise solution, which is strong in the probability sense, and its probability space is the original one.
\begin{definition}\label{Def2}(Local Pathwise Solution)
Fix a stochastic basis $\mathcal{S}=(\Omega,\mathcal{F}, \{ \mathcal{F}_t \}_{t \geq 0},\mathbb{P},W)$ and assume that $u_0$ is a $V$-valued, $\mathcal{F}_0$-measurable random variable. A pair $(u,\tau)$ is called a local pathwise solution to the problem \eqref{ASE} provided that $\tau$ is a strictly positive stopping time and $u(\cdot \land \tau)$ is a $\mathcal{F}_t$-adapted process in $V$ such that \eqref{E1} and \eqref{E2} holds without tildes.
\end{definition}

\begin{definition}(Maximal Pathwise Solution)\label{MS}
We say that a triple $(u,\{ \tau_n \}_{n\in \mathbb{N}},\tau^M)$ is a maximal pathwise solution to the problem \eqref{ASE} if the following holds:
\begin{itemize}
    \item[$\bullet$] $\tau^M$ is a $\mathcal{F}_t$-stopping time and strictly positive a.s.,
    \item[$\bullet$] The pair $(u,\tau_n)$ is a local pathwise solution for each n,
    \item[$\bullet$]  $\{ \tau_n \}_{n\in \mathbb{N}}$ is an increasing sequence of $\mathcal{F}_t$-stopping times such that $\tau_n \rightarrow \tau^M$ a.s., and
\begin{equation}\label{BU}
    \sup_{t\in [0,\tau^M]}||u||^2+\int_0^{\tau^M} |Au|^2 ds = \infty,
\end{equation}
almost surely on $\{ \tau^M < \infty \}$. \\
\end{itemize}
In addition, if
\begin{equation}\label{FBU}
    \sup_{t\in [0,\tau_n]}||u||^2+\int_0^{\tau_n} |Au|^2 ds = n,
\end{equation}
is true for almost surely $\om \in \{\tau^M < \infty \} $, then we say $\{ \tau_n \}_{n\in \mathbb{N}}$ announces any finite time blow up.
\end{definition}

\begin{definition}(Global Pathwise Solution)\label{DGPS}
In Definition \ref{MS}, if there exists $\epsilon \in (0,1)$, such that 
\begin{equation}\label{Global Sense}
    \mathbb{P}(\tau^M=\infty)>1-\epsilon,
\end{equation}
then we call that $(u, \tau^M)$ is a global pathwise solution
in a positive probability sense.
%If the $\tau$ mentioned in Definition \ref{MS} is infinite almost surely, then we say the solution is global.
\end{definition}

Now, we state the main results of this paper. The first one concerns the existence of a unique maximal pathwise solution to the problem \eqref{ASE}, which will be established in Section \ref{LMPS}.

\begin{theorem}\label{MPS}
Let $\mathcal{S}=(\Om, \mathcal{F}, (\mathcal{F}_t)_{t \geq 0}, \mathbb{P}, W)$ be a fixed stochastic basis. For the problem \eqref{ASE}, assume that the stochastic force satisfies the hypotheses \eqref{G-Bnd}-\eqref{G-Lip}, and the initial data $u_0$ is a $V$-valued, $\mathcal{F}_0$-measurable random variable, satisfying
\begin{equation}
    u_0\in L^2(\Om,V).
\end{equation}
Then there exists a unique maximal pathwise solution $(u,\{\tau_n\}_{n \in \mathbb{N}}, \tau^M)$ to the problem \eqref{ASE}. Moreover, we can construct $\{\tau_n\}_{n \in \mathbb{N}}$ to make it announces any finite time blow up.

\begin{remark}
As noted here, for simplicity, we omit the deterministic force  $F(u)$ in the problem \eqref{ASE}, the conclusion of Theorem \ref{MPS} can be extended to the situation with a deterministic external force, with certain hypotheses imposed on $F(u)$ as given in \cite{De-G}.
\end{remark}

%For a fixed stochastic basis $\mathcal{S}=(\Om, \mathcal{F}, (\mathcal{F}_t)_{t \geq 0}, \mathbb{P}, W)$. Assume $u_0$ is a V-valued random variable such that $u_0$ is $\mathcal{F}_0$ measurable and $u_0 \in L^2(\Omega;V)$, and \eqref{G-Bnd}, \eqref{G-Lip} holds. Then there exists a unique maximal pathwise solution $(u,\{\tau_n\}_{n \in \mathbb{N}}, \tau)$ of \eqref{ASE}. Moreover, we can construct $\tau_n$ to make it announce any finite time blow up.
\end{theorem}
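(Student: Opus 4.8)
The plan is to follow the now-standard three-step scheme for probabilistically strong solutions — construct martingale (probabilistically weak) solutions, prove pathwise uniqueness, and then invoke a Gy\"ongy--Krylov/Yamada--Watanabe argument — but carried out on a truncated system so as to accommodate the merely locally Lipschitz noise \eqref{G-Lip} and the supercritical nonlinearity governed by \eqref{B3}. Concretely, I would first introduce a smooth cut-off $\theta_R$ (with $\theta_R\equiv 1$ on $[0,R]$ and $\theta_R\equiv 0$ on $[2R,\infty)$) multiplying the bilinear term, the cut-off being evaluated at a running energy quantity that dominates both $\sup_{s\le t}\|u(s)\|^2$ and $\int_0^t|Au|^2\,ds$. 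This renders the truncated drift and, after a further localization, the noise globally bounded and Lipschitz on the relevant spaces, so that the truncated system is globally well posed in time; the original local solution is then recovered by stopping at the first time the running energy reaches $R$.

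For the truncated system I would set up the Galerkin approximation in $H_n=\mathrm{span}\{e_1,\dots,e_n\}$ and derive $n$-uniform a priori bounds. The $H$-level estimate closes immediately from the cancellation \eqref{B1}, giving control of $\sup_t|u_n|^2$ and $\int\|u_n\|^2$; the $V$-level estimate, obtained by pairing the equation with $Au_n$, produces the dangerous term $(B(u_n,u_n),Au_n)$, which by \eqref{B3} is bounded by $C_B\|u_n\|\,|Au_n|^2$ --- here the cut-off is essential, since it makes $\|u_n\|$ and $\int|Au_n|^2$ a priori bounded by $R$ on the support of $\theta_R$, so the nonlinear contribution becomes a finite ($R$-dependent) constant rather than something requiring absorption. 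Together with the bounds \eqref{G-Bnd} on $G$ and the Burkholder--Davis--Gundy inequality I would upgrade these to higher-moment estimates in $L^2(\Om;C([0,\infty);V))\cap L^2(\Om;L^2_{loc}([0,\infty);D(A)))$, and complement them with fractional-in-time H\"older estimates for the stochastic integral. The compact embeddings $D(A)\subset V\subset H$ then yield tightness of the laws of $(u_n)$ in a path space such as $L^2_{loc}([0,\infty);V)\cap C([0,\infty);H)$; Prokhorov and Skorokhod produce a.s.-convergent copies on a new probability space, and the continuity of $B$ from $V\times D(A)$ to $V'$ lets me identify the limit of the nonlinear term and conclude that the limit is a martingale solution of the truncated system.

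Pathwise uniqueness for the truncated system is the next step: for two solutions $u,v$ with the same data and noise I would estimate the difference $w=u-v$, writing $B(u,u)-B(v,v)=B(w,u)+B(v,w)$, using $\langle B(v,w),w\rangle=0$ from \eqref{B1} and bounding $\langle B(w,u),w\rangle$ through \eqref{B2}, while the stochastic term is controlled by the local Lipschitz property \eqref{G-Lip}. Because the coefficient multiplying $\|w\|^2$ involves $|Au|$, which is only in $L^2_t$, I would localize by stopping times that bound $\int_0^t(|Au|^2+|Av|^2)\,ds$ and close the argument with the stochastic Gronwall lemma from the Appendix, concluding $w\equiv 0$ up to the stopping time and then letting it tend to infinity. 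With existence of martingale solutions and pathwise uniqueness in hand, the Gy\"ongy--Krylov characterization of convergence in probability upgrades these to a pathwise solution of the truncated problem on the original stochastic basis.

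Finally I would remove the truncation and assemble the maximal solution. Setting $\tau_R$ to be the first time the running energy reaches $R$, on $[0,\tau_R]$ the truncated solution solves \eqref{ASE}, so $(u,\tau_R)$ is a local pathwise solution in the sense of Definition \ref{Def2}; pathwise uniqueness guarantees that the solutions corresponding to increasing truncation levels coincide on their common intervals, so they glue into a single process. Defining $\tau_n$ through \eqref{FBU} and $\tau^M=\lim_n\tau_n$, the monotonicity yields a well-defined maximal solution, and the blow-up alternative \eqref{BU} follows because on $\{\tau^M<\infty\}$ the running energy must diverge --- otherwise the already established local existence would allow a strict extension, contradicting maximality. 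I expect the main obstacle to be the stochastic compactness step together with its interaction with the weakened, merely locally Lipschitz noise \eqref{G-Lip}: the truncation must simultaneously tame the supercritical nonlinearity and localize the noise to a region where it is effectively bounded and Lipschitz, and one must verify that all the a priori bounds, the passage to the limit in the nonlinear and stochastic terms, and the uniqueness estimate survive this localization uniformly enough to carry the Gy\"ongy--Krylov argument through.
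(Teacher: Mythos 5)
Your overall architecture (Galerkin $\to$ tightness $\to$ Prokhorov/Skorokhod $\to$ martingale solution $\to$ pathwise uniqueness $\to$ Gy\"ongy--Krylov $\to$ glue local solutions into a maximal one) matches the paper's, but your choice of cut-off contains a genuine gap, and it is exactly at the point you flagged as the ``main obstacle.'' Under the hypotheses of Theorem \ref{MPS} the only bound on the dangerous term is \eqref{B3}, which gives $|(B(u,u),Au)|\le C_B\|u\|\,|Au|^2$ --- the \emph{full} power of the dissipation appears, so Young's inequality cannot trade it down. Consequently a cut-off of $\sup_{s\le t}\|u(s)\|^2$ alone at a \emph{large} radius $R$ is useless: on its support you only get $C_B\sqrt{2R}\,|Au|^2$, which cannot be absorbed into the dissipation $|Au|^2$ when $R$ is large. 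You correctly sensed this and therefore put $\int_0^t|Au|^2\,ds$ into the truncated quantity, so that the time-integrated nonlinear contribution becomes an $R$-dependent constant. But this repair destroys the compactness step: the uniform estimates give tightness only in a space like $L^2(0,T;V)\cap C([0,T];V')$, and in that topology $\int_0^t|Au^n|^2\,ds$ is merely weakly lower semicontinuous (and $\sup_{s\le t}\|u^n(s)\|^2$ does not converge at all), so after Skorokhod you cannot conclude $\theta_R\bigl(E_t(u^n)\bigr)\to\theta_R\bigl(E_t(u)\bigr)$, and the identification of the limit equation --- both in the drift and in the stochastic integral, where this factor multiplies $B$ and $G$ --- fails. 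So with \eqref{B3} as the only structural hypothesis (recall \eqref{HGP}, with its subcritical exponent $b<2$, is assumed only for the global result, Theorem \ref{GPT}), neither variant of your truncation closes.

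The paper (following \cite{De-G}) resolves this tension with a different cut-off: $\theta(\|u(t)-u_*(t)\|)$, where $u_*$ solves the \emph{linear} problem \eqref{LS} with the same data, and, crucially, the radius is \emph{small}, $\kappa\le\frac{1}{64C_B}$. Absorption then works by smallness rather than boundedness: writing $\bar u=u-u_*$, the critical term is bounded by $C_B\|\bar u\|\,|A\bar u|^2\le 2\kappa C_B|A\bar u|^2$ on the support of the cut-off, which the dissipation swallows, while the cross terms involve only $u_*$ and are controlled by the deterministic estimates \eqref{EFLS}--\eqref{EFLS2}. Since $\bar u(0)=0$, the cut-off is initially inactive no matter how large $u_0$ is, so the recovered stopping time $\tilde\tau=\inf\{t:\|u-u_*\|\ge\kappa\}$ is strictly positive; and since the cut-off is a pointwise-in-time function of a $V$-norm, it passes to the limit a.e.\ under the $L^2(0,T;V)$ convergence that Skorokhod actually provides. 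Two smaller omissions in your write-up: the argument is first run for initial data bounded a.s.\ in $V$ (Proposition \ref{WI}) and then extended to general $u_0\in L^2(\Omega;V)$ via the decomposition $u_0=\sum_k u_0\mathbb{I}_{k\le\|u_0\|<k+1}$ --- with your cut-off, strict positivity of the stopping time would anyway force such a preliminary boundedness reduction; and the passage from local to maximal solutions announcing blow-up is quoted from \cite{Jacob,G-H-Z2} rather than reproved, which matches your sketch.
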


The second result is the existence of a global pathwise solution to the problem \eqref{ASE}, which will be obtained in Section \ref{GPS}.
\begin{theorem}\label{GPT}
Under the hypotheses in Theorem \ref{MPS}, we further assume that $G(u)$ satisfies \eqref{G-H1}-\eqref{G-H3}, and \eqref{HGP} holds for the deterministic terms $A$ and $B$. Then for any $\epsilon \in (0,1)$, there exists some $\delta =\delta(\epsilon) >0$, such that for any initial data $u_0$ satisfying $\mathbb{E}||u_0||\leq \delta$, the pathwise solution constructed in Theorem \ref{MPS} is global in the sense of Definition \ref{DGPS}.

%Suppose the assumptions in theorem \ref{MPS} holds, in additon, we further assume that \eqref{G-H1}-\eqref{G-H3} holds for G and \eqref{HGP} holds for A and B. Then for any $\epsilon >0$, there exists a $\delta >0$, such that for any initial data $u_0$ satisfying $\mathbb{E}||u_0||\leq \delta$, the pathwise solution constructed in theorem \ref{MPS} is global in the sense of
%\begin{equation}
%    \mathbb{P}(\tau=\infty) \geq 1-\epsilon.
%\end{equation}
\end{theorem}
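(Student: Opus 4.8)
The plan is to convert the abstract equation \eqref{ASE} into a one-dimensional Itô process for the $V$-energy $\phi:=\|u\|^2=(Au,u)$, and then exploit the noise lower bound \eqref{G-H1} together with the smallness condition $\alpha^2<2\beta^2$ to manufacture a strictly negative drift for $\log\phi$; smallness of the data will keep $\phi$ from ever reaching a fixed threshold $R_0$, and by the blow-up criterion \eqref{BU} this forces $\tau^M=\infty$ with high probability. Concretely, I would first apply Itô's formula to $\phi$ along the solution (rigorously on the Galerkin level and passing to the limit, as in the construction behind Theorem \ref{MPS}), obtaining
\[
 d\phi = \big(-2|Au|^2 - 2(B(u,u),Au) + \|G(u)\|_{L_2(\mathcal H,V)}^2\big)\,dt + dM, \qquad dM = 2((G(u)\,dW,u)),
\]
with quadratic variation $d\langle M\rangle = 4\|((G(u),u))\|_{L_2(\mathcal H,\mathbb R)}^2\,dt$. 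Using \eqref{HGP} and Young's inequality with exponents $2/b$ and $2/(2-b)$, one has $2|(B(u,u),Au)|\le |Au|^2 + C_1\|u\|^{2\gamma}$ with $\gamma=a/(2-b)>1$ (this is exactly where $0<b<2$ and $a+b>2$ enter), while \eqref{G-H2} bounds the last drift term by $\alpha^2\phi$. Hence $d\phi \le (-|Au|^2 + \alpha^2\phi + C_1\phi^{\gamma})\,dt + dM$.

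The crucial step is the logarithmic transform. From \eqref{G-H1} we get $d\langle M\rangle\ge 4\beta^2\phi^2\,dt$, whereas Cauchy--Schwarz together with \eqref{G-H2} gives the matching upper bound $d\langle M\rangle\le 4\alpha^2\phi^2\,dt$. Applying Itô's formula to $Y:=\log\phi$, the correction $-\tfrac1{2\phi^2}\,d\langle M\rangle\le -2\beta^2\,dt$ combines with the $\alpha^2\,dt$ coming from $\tfrac1\phi\cdot\alpha^2\phi\,dt$ to yield, after discarding the favourable term $-|Au|^2/\phi\le 0$,
\[
 dY \le \big(-\kappa + C_1\phi^{\gamma-1}\big)\,dt + dN, \qquad \kappa:=2\beta^2-\alpha^2>0,
\]
where $N=\int \phi^{-1}\,dM$ obeys $d\langle N\rangle = \phi^{-2}\,d\langle M\rangle\le 4\alpha^2\,dt$. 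Now fix $R_0>0$ with $C_1R_0^{\gamma-1}\le\kappa/2$ (possible precisely because $\gamma>1$) and set $\xi:=\inf\{t<\tau^M:\phi(t)\ge R_0\}$; on $[0,\xi)$ the drift of $Y$ is at most $-\kappa/2$.

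Next I would fix $\theta\in(0,\tfrac12]\cap(0,\tfrac{\kappa}{4\alpha^2}]$ and check that $e^{\theta Y_{t\wedge\xi}}=\phi(t\wedge\xi)^{\theta}$ is a nonnegative supermartingale, since its drift coefficient $\theta\big(-\kappa/2+2\theta\alpha^2\big)\le 0$ by the choice of $\theta$ and the bound on $\langle N\rangle$. Because $\phi(\xi)=R_0$ on $\{\xi<\tau^M\}$, Doob's maximal inequality gives
\[
 \mathbb P(\xi<\tau^M)\le \frac{\mathbb E[\phi(0)^{\theta}]}{R_0^{\theta}} = \frac{\mathbb E[\|u_0\|^{2\theta}]}{R_0^{\theta}}\le \frac{(\mathbb E\|u_0\|)^{2\theta}}{R_0^{\theta}}\le\Big(\tfrac{\delta^2}{R_0}\Big)^{\theta},
\]
the penultimate inequality being Jensen's (valid since $2\theta\le1$), so choosing $\delta=\delta(\epsilon)$ small forces the right-hand side below $\epsilon$. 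It remains to show $\{\tau^M<\infty\}\subset\{\xi<\tau^M\}$: on $\{\tau^M<\infty\}$ the blow-up \eqref{BU} must occur through $\sup_{[0,\tau^M)}\|u\|^2=\infty$, for if that supremum were finite then the energy estimate of the first paragraph—whose martingale then has finite quadratic variation—would give $\int_0^{\tau^M}|Au|^2\,ds<\infty$, contradicting \eqref{BU}; hence $\phi$ attains $R_0$ before $\tau^M$. Therefore $\mathbb P(\tau^M=\infty)\ge 1-\mathbb P(\xi<\tau^M)>1-\epsilon$, which is Definition \ref{DGPS}.

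The delicate points, and the ones I expect to cost the most work, are twofold. First, Itô's formula is applied to the singular functional $\log\|u\|^2$ and the supermartingale property is asserted across the whole random interval $[0,\tau^M)$; I would make this rigorous by localizing with the announcing times $\tau_n$ from \eqref{FBU}, running the estimate on $[0,\xi\wedge\tau_n]$ where $\phi$ stays bounded away from $0$ and $\infty$, and then passing $n\to\infty$. Second, one must match the exponent in $\mathbb E[\|u_0\|^{2\theta}]$ to the hypothesis $\mathbb E\|u_0\|\le\delta$; this is what pins the admissible range of $\theta$ to $\theta\le 1/2$ and clarifies why only a first moment of the initial data is required. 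Everything else reduces to quantitative bookkeeping of the constants $\kappa$, $R_0$, and $\theta$.
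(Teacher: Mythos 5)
Your core strategy coincides with the paper's: both raise the $V$-energy to a small power (your $\theta$, the paper's $\lambda/2$), use the It\^o correction term together with the noise lower bound \eqref{G-H1} and the margin \eqref{G-H3} to dominate the drift contributions from \eqref{G-H2} and from the nonlinearity (controlled via \eqref{HGP} and Young below a norm threshold --- exactly where $0<b<2$, $a+b>2$ enter in both arguments), stop at that threshold, and combine a tail bound --- with Jensen's inequality at exponent $\le 1$, which is why only $\mathbb{E}\|u_0\|$ is needed --- with the fact that finite-time blow-up forces the threshold to be crossed first. Within this common skeleton you deviate in two places. First, where the paper controls the stochastic integral by BDG (Lemma \ref{BDG}) and the stochastic Gronwall lemma (Lemma \ref{SGL}) to reach \eqref{G9}, and then applies Chebyshev at the stopped time in \eqref{G7}, you observe that $\phi^\theta(\cdot\wedge\xi)$ is a nonnegative local, hence true, supermartingale and invoke Doob's maximal inequality; this is cleaner and avoids the Gronwall machinery. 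Second, where the paper's key Lemma \ref{GP} is proved in the appendix using the announcing sequence \eqref{FBU} and a martingale tail inequality from \cite{RS}, you argue that on $\{\tau^M<\infty\}$ the supremum part of \eqref{BU} must itself blow up, since otherwise the energy martingale has finite quadratic variation, hence a finite limit, making $\int_0^{\tau^M}|Au|^2\,ds$ finite and contradicting \eqref{BU}. This alternative proof is correct and uses only \eqref{BU}, not the quantitative announcing property \eqref{FBU}.

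The one genuine defect is your plan for making the singular It\^o steps rigorous. You propose to work on $[0,\xi\wedge\tau_n]$, ``where $\phi$ stays bounded away from $0$ and $\infty$'': the upper bound is fine, but nothing provides a lower bound --- the times $\tau_n$ control $\sup\|u\|^2$ and $\int|Au|^2\,ds$, not the distance of $\|u(t)\|$ from zero, and \eqref{G-H2} even forces $G(0)=0$, so the solution may approach or hit $0$. Since $x\mapsto\log x$ and $x\mapsto x^\theta$ ($0<\theta<1$) have unbounded derivatives at $0$, this localization does not justify the It\^o formula for $Y=\log\phi$ or for $\phi^\theta$, nor the supermartingale property. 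The repair is standard and is precisely what the paper's $\eta$-shift accomplishes: redo your computation for $(\phi+\eta)^\theta$ with $\eta>0$; the It\^o correction then only controls $\phi^2/(\phi+\eta)^2$ rather than $1$, leaving an additive error of order $\eta^{\theta}t$ exactly as in \eqref{G3}--\eqref{G5}, which vanishes as $\eta\to 0$ and yields $\mathbb{E}\,\phi^\theta(t\wedge\xi)\le\mathbb{E}\,\phi^\theta(0)$ for each $t$; Chebyshev at the stopped time, as in \eqref{G7}, then replaces Doob and completes the argument with your constants $\kappa$, $R_0$, $\theta$ unchanged. (Alternatively, stop additionally at $\inf\{t:\phi(t)\le 1/m\}$ and dispose of the event that $\phi$ reaches $0$ by absorption: $G(0)=0$ and pathwise uniqueness, Proposition \ref{LPS}, imply $u\equiv 0$ afterwards, so the threshold is never reached on that event.) With either fix, the rest of your proof is sound.
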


Applying Theorems \ref{MPS} and \ref{GPT} to the initial boundary value problem of the stochastic Navier-Stokes equations \eqref{SNS}, we immediately come to the following conclusion, which shall be discussed in detail in Section \ref{NS}.
\begin{theorem}\label{GPSSNS}
Assume that the hypotheses for noise and initial data in Theorem \ref{GPT} hold for the problem \eqref{SNS}. Then there exists a  unique maximal pathwise solution $(u,\{\tau_n\}_{n \in \mathbb{N}}, \tau^M)$ to the problem \eqref{SNS}. Furthermore, for any $\epsilon \in (0,1)$, there exists some $\delta =\delta(\epsilon,\nu) >0$, such that for any initial data $u_0$ satisfying $\mathbb{E}||u_0||\leq \delta$, the pathwise solution is global in the sense of Definition \ref{DGPS}.

%^For a fixed stochastic basis $\mathcal{S}=(\Om, \mathcal{F}, (\mathcal{F}_t)_{t \geq 0}, \mathbb{P}, W)$. Assume $u_0$ is a V-valued random variable such that $u_0$ is $\mathcal{F}_0$ measurable with $u_0 \in L^2(\Omega;V)$, and \eqref{G-Bnd}, \eqref{G-Lip} holds. Then there exists a unique maximal pathwise solution $(u,\{\tau_n\}_{n \in \mathbb{N}}, \tau)$ of \eqref{SNS}. Moreover, we can construct $\tau_n$ to make it announce any finite time blow up. In addition, If we further assume that \eqref{G-H1}-\eqref{G-H3} hold for G and \eqref{HGP} holds for A and B, then for any $\epsilon >0$, there exists a $\delta = \delta(\nu) >0$, such that for any initial data $u_0$ satisfying $\mathbb{E}||u_0||\leq \delta$, the pathwise solution mentioned above is global in the sense of
%\begin{equation}\label{DOGL}
%    \mathbb{P}(\tau=\infty) \geq 1-\epsilon.
%\end{equation}
\end{theorem}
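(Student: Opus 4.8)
The plan is to treat Theorem \ref{GPSSNS} as a direct specialization of the abstract Theorems \ref{MPS} and \ref{GPT}: the entire task is to recast the stochastic Navier-Stokes system \eqref{SNS} into the abstract form \eqref{ASE} and then check, one by one, that the structural hypotheses of Section \ref{NH} are satisfied by the concrete operators. First I would fix the standard functional setting: let $H$ be the closure in $L^2$ of the smooth, divergence-free fields subject to the prescribed boundary conditions, let $V=H\cap H^1_0$, define $A=\nu\mathcal{A}$ with $\mathcal{A}=-P\Delta$ the Stokes operator ($P$ the Leray projection), and set $B(u,v)=P((u\cdot\nabla)v)$. With this identification the deterministic part of \eqref{SNS} is exactly $Au+B(u,u)$, the viscosity $\nu$ having been absorbed into $A$; it is precisely this absorption that will later make the smallness threshold depend on $\nu$.

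Next I would verify the hypotheses on $A$ and $B$. The Stokes operator is classically unbounded, densely defined, self-adjoint and positive with compact inverse (the embedding $V\hookrightarrow H$ being compact on the bounded domain), so its eigenfunctions form an orthonormal basis of $H$, the eigenvalues satisfy $0<\lambda_1\le\lambda_2\le\cdots\to\infty$, and $D(A^{1/2})=V$, giving \eqref{AS} and the required compact embeddings. The bounds \eqref{B1}, \eqref{B2} and \eqref{B3} are the standard orthogonality and boundedness properties of the trilinear form $\langle B(u,v),w\rangle=\int (u\cdot\nabla)v\cdot w$, obtained by integration by parts together with H\"older and the Ladyzhenskaya/Agmon/Sobolev interpolation inequalities valid in dimensions $d=2,3$. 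The one genuinely load-bearing estimate is \eqref{HGP}: taking $v=u$, $w=Au$ and using $||u||_{L^6}\le C||u||$ and $||\nabla u||_{L^3}\le C||u||^{1/2}|Au|^{1/2}$ in three dimensions yields $|(B(u,u),Au)|\le C||u||^{3/2}|Au|^{3/2}$, so one may take $a=b=3/2$; then $0<b=3/2<2$ and $a+b=3>2$, and the two-dimensional case is at least as favorable. The noise conditions \eqref{G-Bnd}, \eqref{G-Lip} and \eqref{G-H1}--\eqref{G-H3} are imposed directly as hypotheses on the concrete $G$, so no further verification of the noise is needed.

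With all hypotheses in hand I would invoke Theorem \ref{MPS} to obtain the unique maximal pathwise solution $(u,\{\tau_n\},\tau^M)$, and Theorem \ref{GPT} to upgrade it to a global solution in the sense of Definition \ref{DGPS} whenever $\mathbb{E}||u_0||\le\delta$. The final and most delicate point is to trace the dependence of $\delta$ on $\nu$: the threshold $\delta(\epsilon)$ furnished by Theorem \ref{GPT} is assembled from the abstract constants $C_B$, the constant $C$ and exponents $a,b$ in \eqref{HGP}, the noise parameters $\alpha,\beta$, and the first eigenvalue $\lambda_1$. Under the identification $A=\nu\mathcal{A}$ these quantities rescale by powers of $\nu$: the eigenvalue becomes $\lambda_1=\nu\mu_1$, and rewriting the $\nu$-free version of \eqref{HGP} in terms of $||u||$ and $|Au|$ forces the constant $C$ to pick up a negative power of $\nu$. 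Propagating these factors through the proof of Theorem \ref{GPT} produces a threshold of the form $\delta=\delta(\epsilon,\nu)$. I expect the only parts requiring genuine care to be the verification of \eqref{HGP} with admissible exponents in three dimensions and the bookkeeping of these viscosity constants; everything else is a routine restatement of classical Navier-Stokes functional analysis, which is why the statement is presented as an immediate corollary.
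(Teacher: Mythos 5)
Your proposal is correct and follows essentially the same route as the paper: recast \eqref{SNS} via the Leray projection into the abstract form, verify \eqref{B1}--\eqref{B3} by H\"older/Ladyzhenskaya/Sobolev interpolation, verify the key hypothesis \eqref{HGP} through the estimate $|(B(u,u),Au)|\leq C\|u\|^{3/2}|Au|^{3/2}$ (i.e.\ $a=b=3/2$, exactly the paper's Lemma \ref{ABFN2}), and then invoke Theorems \ref{MPS} and \ref{GPT}, with the $\nu$-dependence of $\delta$ arising from absorbing the viscosity into the abstract operator. No substantive difference from the paper's argument.
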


Note that the $\delta$ in Theorem \ref{GPSSNS} depends on the viscosity $\nu$, in Section \ref{NSM} we shall show that if we consider the stochastic Navier-Stokes equations defined on the torus $\mathbb{T}^2$ or $\mathbb{T}^3$, then the $\delta$ can be independent of $\nu$ providing that the initial data is sufficiently smooth and small, namely in $H^m$ ($m > \frac{d}{2}+1 $). Denote by $X^m$ the divergence free subspace of $H^m$.

\begin{theorem}\label{GPSmT}
For a fixed $m > \frac{d}{2}+1$, let $\mathcal{S}=(\Om, \mathcal{F}, \{ \mathcal{F}_t \}_{t \geq 0}, \mathbb{P}, W)$ be a fixed stochastic basis, for the problem \eqref{SNS} defined on the torus $\mathbb{T}^2$ or $\mathbb{T}^3$, assume that the stochastic force satisfies the hypotheses \eqref{GPSm-H1}-\eqref{GPSm-H4} and \eqref{G-H3}, and the initial data $u_0$ is a $X^m$-valued, $\mathcal{F}_0$-measurable random variable. Then for any $ \epsilon \in (0,1)$, there exists $\de=\de(\epsilon) > 0 $ such that for any initial data $u_0$ satisfying $\mathbb{E}||u_0||_{H^m} \leq \de$, the problem \eqref{SNS} admits a global pathwise solution in $C([0,\infty);X^m)$.

%For a fixed stochastic basis $\mathcal{S}=(\Om, \mathcal{F}, (\mathcal{F}_t)_{t \geq 0}, \mathbb{P},W)$. Assume $m>\frac{d}{2}+1$ and $u_0$ is a $X^m$-valued random variable such that $u_0$ is $\mathcal{F}_0$ measurable, assume G satisfies \eqref{GPSm-H1}-\eqref{GPSm-H4}, \eqref{G-H3}. Then for $\forall \epsilon > 0$, there exists $\de=\de(\epsilon;\al,\be) > 0 $ such that for any initial data $u_0$ satisfies $\mathbb{E}||u_0|| \leq \de$ (c.f. \eqref{GPSm2}), there exists a unique global pathwise solution  in the sense of \eqref{DOGL}.
\end{theorem}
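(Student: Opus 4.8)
The plan is to transfer the abstract machinery of Theorems~\ref{MPS} and~\ref{GPT} to the scale $X^{m-1}\subset X^{m}\subset X^{m+1}$ built on the Stokes operator on $\mathbb{T}^{d}$, and then to exploit that for $m>\frac d2+1$ the convective nonlinearity is subcritical with respect to the energy norm $\|\cdot\|_{H^m}$, so that no viscous dissipation has to be spent to control it. First I would rerun the Galerkin/compactness construction and pathwise-uniqueness argument behind Theorem~\ref{MPS} with $(H,V,D(A))$ replaced by $(X^{m-1},X^{m},X^{m+1})$: the cancellation $\langle B(u,v),v\rangle=0$ from \eqref{B1} together with the $H^m$ product and Kato--Ponce commutator estimates play the role of \eqref{B2}--\eqref{B3}, while the hypotheses \eqref{GPSm-H1}--\eqref{GPSm-H4} and \eqref{G-H3} are the $H^m$ counterparts of the boundedness/Lipschitz conditions \eqref{G-Bnd}--\eqref{G-Lip} and of the structural bounds \eqref{G-H1}--\eqref{G-H3} used for Theorem~\ref{GPT}. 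This yields a unique maximal pathwise solution $(u,\{\tau_n\},\tau^M)$ with $u\in C([0,\tau^M);X^{m})$ and the blow-up alternative that, on $\{\tau^M<\infty\}$,
\[
\sup_{t\in[0,\tau^M]}\|u\|_{H^m}^2+\int_0^{\tau^M}\|u\|_{H^{m+1}}^2\,ds=\infty ,
\]
the $H^m$-analogue of \eqref{BU}.

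The heart of the matter is a $\nu$-independent estimate for the nonlinearity. Since $B(u,u)$ is the Leray projection of the convective term $(u\cdot\nabla)u$ and $\nabla\cdot u=0$, integration by parts removes the top-order transport contribution in the $H^m$ inner product, and a Kato--Ponce commutator bound leaves
\[
\big|\langle B(u,u),u\rangle_{H^m}\big|\le C\,\|\nabla u\|_{L^\infty}\,\|u\|_{H^m}^2\le C\,\|u\|_{H^m}^3 ,
\]
where the last step uses the embedding $H^{m-1}\hookrightarrow L^\infty$ valid for $m>\frac d2+1$, and $C$ depends only on $m,d$. Applying the It\^o formula to $\log\|u\|_{H^m}^2$, inserting the lower bound $\|((G(u),u))_{H^m}\|_{L_2(\mathcal H,\mathbb R)}^2\ge\beta^2\|u\|_{H^m}^4$ and the upper bound $\|G(u)\|_{L_2(\mathcal H,X^m)}^2\le\alpha^2\|u\|_{H^m}^2$, and discarding the nonpositive viscous term $-2\nu\|u\|_{H^{m+1}}^2/\|u\|_{H^m}^2$, the drift of $\log\|u\|_{H^m}^2$ is bounded above by $(\alpha^2-2\beta^2)+2C\|u\|_{H^m}$. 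Crucially, both $C$ and the constants $\alpha,\beta$ are independent of $\nu$, which is the whole point of raising the regularity.

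Fix $R>0$ so small that $2CR<2\beta^2-\alpha^2$, possible by \eqref{G-H3}, set $\gamma:=2\beta^2-\alpha^2-2CR>0$ and $\tau_R:=\inf\{t\ge0:\|u(t)\|_{H^m}\ge R\}$. On $[0,\tau_R)$ one then has $\log\|u(t)\|_{H^m}^2\le\log\|u_0\|_{H^m}^2-\gamma t+M_t$, where $M$ is a continuous martingale whose quadratic variation satisfies $d\langle M\rangle\le 4\alpha^2\,dt$ by Cauchy--Schwarz. An exponential-supermartingale (Doob maximal) estimate applied to $\exp(\lambda(M_t-\gamma t))$ with a suitable $\lambda\in(0,\tfrac12]$ then gives, conditionally on $\mathcal F_0$,
\[
\mathbb{P}(\tau_R<\infty\mid u_0)\le\Big(\tfrac{\|u_0\|_{H^m}}{R}\Big)^{2\lambda}.
\]
Taking expectations and using Jensen's inequality yields $\mathbb{P}(\tau_R<\infty)\le(\delta/R)^{2\lambda}$ under $\mathbb{E}\|u_0\|_{H^m}\le\delta$, which is $<\epsilon$ once $\delta$ is small enough. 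Finally, on $\{\tau_R=\infty\}$ the norm $\|u\|_{H^m}$ stays below $R$, and integrating the $H^m$ energy balance shows $\int_0^T\|u\|_{H^{m+1}}^2\,ds<\infty$ a.s. for every finite $T$; hence the blow-up quantity is finite and $\tau^M=\infty$ there. Therefore $\mathbb{P}(\tau^M=\infty)\ge\mathbb{P}(\tau_R=\infty)>1-\epsilon$, and the resulting solution lies in $C([0,\infty);X^m)$, giving a global pathwise solution in the sense of Definition~\ref{DGPS}.

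I expect the main obstacle to be twofold. First, re-establishing local existence, uniqueness and the blow-up criterion directly in the high-regularity space $X^m$---in particular checking that the $H^m$ commutator estimates furnish the analogues of \eqref{B2}--\eqref{B3} uniformly and that the Galerkin projections remain compatible with the $H^m$ inner product---requires care, even though it is modeled on Theorem~\ref{MPS}. Second, and conceptually more important, is isolating the genuinely $\nu$-free bound $|\langle B(u,u),u\rangle_{H^m}|\le C\|u\|_{H^m}^3$: this is what lets the destabilizing contribution enter the drift as $2C\|u\|_{H^m}$ rather than being forced to borrow from the viscous term $\nu\|u\|_{H^{m+1}}^2$ (as happens under \eqref{HGP} in the $H^1$ theory behind Theorem~\ref{GPT}), and it is precisely this that renders the threshold $\delta$---hence the probability---independent of the viscosity.
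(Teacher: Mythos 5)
Your proposal is essentially correct and rests on the same two pillars as the paper's own proof: the $\nu$-independent cubic estimate $|(B(u,u),u)_{H^m}|\le C\|u\|_{H^m}^3$ (the paper derives it from \eqref{EFB2} together with the embedding $H^m\subset W^{1,\infty}$; your Kato--Ponce commutator bound plus $H^{m-1}\hookrightarrow L^\infty$ is the same estimate), and the structural noise conditions \eqref{GPSm-H3}, \eqref{GPSm-H4}, \eqref{G-H3}, which make the It\^o correction supply a strictly negative effective drift once $\|u\|_{H^m}$ sits below a fixed threshold. Where you genuinely differ is in how the negative drift is converted into a probability bound. The paper applies It\^o's formula to the regularized fractional power $\left(\|u\|_{H^m}^2+\eta\right)^{\lambda/2}$ with the explicit choice $\lambda=\frac{2\beta^2-\alpha^2}{2\beta^2+4C_4\alpha^2}$, notes that the drift is nonpositive up to the hitting time $\bar\sigma_\xi$ (whose occurrence before $\tau^M$ is inherited from Lemma \ref{GP} by comparing stopping times), lets $\eta\to0$ to get $\mathbb{E}\|u(t\wedge\bar\sigma_\xi)\|_{H^m}^\lambda\le(\mathbb{E}\|u_0\|_{H^m})^\lambda$, and finishes with Chebyshev. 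You instead work with $\log\|u\|_{H^m}^2$, get the additive drift bound $(\alpha^2-2\beta^2)+2C\|u\|_{H^m}$, and invoke an exponential supermartingale/Doob maximal inequality, closing the blow-up alternative by hand via the $H^m$ energy balance on $\{\tau_R=\infty\}$. Since the paper's functional is $\exp\bigl(\tfrac{\lambda}{2}\log(\|u\|_{H^m}^2+\eta)\bigr)$, the two routes are exponentially conjugate: yours is a different implementation of the same idea, buying an explicit tail $(\delta/R)^{2\lambda}$ and dispensing with the stochastic Gronwall lemma, while the paper's buys a computation in which every It\^o step is automatically legitimate.

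That last point is your one technical soft spot. It\^o's formula cannot be applied to $\log\|u\|_{H^m}^2$ on trajectories where the solution reaches $0$, and the naive regularization $\log(\|u\|_{H^m}^2+\eta)$ destroys the uniform lower bound $2\|(G(u),u)_{H^m}\|^2_{L_2(\mathcal{H},\mathbb{R})}\big/\bigl(\|u\|_{H^m}^2+\eta\bigr)^2\ge 2\beta^2$ near $u=0$; this degeneration is exactly what the paper's power-$\lambda$ splitting in \eqref{G3} is designed to absorb. The fix inside your scheme is short but must be stated: \eqref{GPSm-H4} forces $G(0)=0$, so $u\equiv0$ solves the equation and, by pathwise uniqueness, the zero state is absorbing; hence $\{\tau_R<\infty\}$ is contained in the event that the solution stays strictly positive up to $\tau_R$, and it suffices to run the log/supermartingale argument stopped at the hitting time of $0$. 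With that emendation --- and with the local $X^m$ theory and its blow-up criterion actually carried out, which both you and the paper only sketch (the paper via a $W^{1,\infty}$ cut-off following \cite{TZ} and Galerkin bounds at the level $H^{m+3}$, you via a shifted abstract scale in which commutator estimates must replace the unavailable cancellation \eqref{B1} and bounds \eqref{B2}--\eqref{B3}) --- your argument is a valid proof of Theorem \ref{GPSmT}, including the $\nu$-independence of $\delta$.
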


\section{Local existence of maximal pathwise solutions}\label{LMPS}

In this section, we prove Theorem \ref{MPS}. Let's first establish the following proposition.
\begin{proposition}\label{WI}
Let $\mathcal{S}=(\Om, \mathcal{F}, (\mathcal{F}_t)_{t \geq 0}, \mathbb{P}, W)$ be a fixed stochastic basis, for the problem \eqref{ASE}, assume that the stochastic force satisfies the hypotheses \eqref{G-Bnd}-\eqref{G-Lip}, and the initial data $u_0$ is a $V$-valued, $\mathcal{F}_0$-measurable random variable, satisfying
\begin{equation}\label{WH}
    ||u_0|| \leq M , \,\,\,\,\,\,\,\,\, \mathrm{a.s.}\,\,\om \in \Om,
\end{equation}
for a positive constant M. Then there exists a unique local pathwise solution $(u,\tau)$ to the problem \eqref{ASE}.

%Then there exists a unique maximal pathwise solution $(u,\{\tau_n\}_{n \in \mathbb{N}}, \tau)$ of \eqref{ASE}. Moreover, we can construct $\tau_n$ to make it announce any finite time blow up.
%For a fixed stochastic basis $\mathcal{S}=(\Om, \mathcal{F}, (\mathcal{F}_t)_{t \geq 0}, \mathbb{P}, W)$. Assume $u_0$ is a V-valued random variable such that $u_0$ is $\mathcal{F}_0$ measurable and
%\begin{equation}\label{WH}
%    u_0 \leq M \,\,\,\,\,\,\,\,\, a.s.\,\,\om \in \Om,
%\end{equation}
%where M is a constant, and assume \eqref{G-Bnd}, \eqref{G-Lip} hold. Then there exists a unique local pathwise solution $(u,\{\tau_n\}_{n \in \mathbb{N}}, \tau)$ of \eqref{ASE}.
\end{proposition}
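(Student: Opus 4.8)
The plan is to follow the classical program for stochastic Navier--Stokes type equations: first produce a martingale solution by a Galerkin scheme combined with stochastic compactness, then establish pathwise uniqueness, and finally upgrade to a pathwise solution on the fixed basis $\mathcal{S}$ via the Gy\"ongy--Krylov characterization of convergence in probability (equivalently the Yamada--Watanabe lemma). Because the bilinear term obeys only the super-linear estimates \eqref{B2}--\eqref{B3} and $G$ is merely locally Lipschitz, I would first truncate: fix a smooth cutoff $\theta_R:[0,\infty)\to[0,1]$ with $\theta_R\equiv 1$ on $[0,R]$ and $\theta_R\equiv 0$ on $[2R,\infty)$, and study
\begin{equation*}
du+[Au+\theta_R(\|u\|)B(u,u)]\,dt=\theta_R(\|u\|)G(u)\,dW.
\end{equation*}
On the set $\{\|u\|\le 2R\}$ the hypotheses \eqref{G-Bnd}--\eqref{G-Lip} become effective global bounds, since the Lipschitz constant in the $V$-estimate of $G$ depends only on $\|u\|_H\le C\|u\|$, hence on $R$; thus the truncated coefficients are globally controlled. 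Choosing $R>M$ and recovering the original equation on $\tau:=\inf\{t\ge0:\|u(t)\|\ge R\}$, which is strictly positive a.s.\ because $\|u_0\|\le M<R$, will then produce the local pathwise solution of Definition \ref{Def2}.

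For the truncated system I would run the Galerkin approximation $u^n\in H_n$ solving the projected finite-dimensional SDE; its coefficients are globally Lipschitz after truncation, so $u^n$ exists for all time. Testing the $n$-th equation against $u^n$ and against $Au^n$, and applying It\^o's formula together with \eqref{AS}, \eqref{B1}, the cutoff, \eqref{G-Bnd} and the Burkholder--Davis--Gundy inequality, yields uniform bounds
\begin{equation*}
\mathbb{E}\sup_{t\le T}\|u^n\|^2+\mathbb{E}\int_0^T|Au^n|^2\,ds\le C\big(R,T,\mathbb{E}\|u_0\|^2\big).
\end{equation*}
The dangerous term $\theta_R(\|u^n\|)(B(u^n,u^n),Au^n)$ is precisely where the cutoff is needed: by \eqref{B3} it is bounded by $C_B\|u^n\|^{3/2}|Au^n|^{3/2}$, and $\theta_R$ confines $\|u^n\|$ so that Young's inequality absorbs $|Au^n|^2$ into the left-hand side. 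Combining the $L^2_tD(A)$ bound with a fractional time-regularity estimate for the It\^o integral (again via Burkholder--Davis--Gundy), I would obtain tightness of the laws of $(u^n)$ in $L^2_{loc}([0,\infty);V)$ through the compact embedding $D(A)\subset V$ and an Aubin--Lions--Simon type lemma, together with weak continuity in $V$. Prokhorov's theorem and the Skorokhod representation theorem then furnish a.s.-convergent copies on a new basis; the strong $L^2_tV$ convergence identifies the nonlinear limit, and a standard martingale argument identifies the stochastic integral, giving a martingale solution of the truncated equation.

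Next I would prove pathwise uniqueness on the original basis $\mathcal{S}$. Given two solutions $u_1,u_2$ with the same data, set $w=u_1-u_2$ and write $B(u_1,u_1)-B(u_2,u_2)=B(w,u_1)+B(u_2,w)$. Applying It\^o's formula to $|w|^2$, using the cancellation \eqref{B1}, the bilinear bounds \eqref{B2}--\eqref{B3} with interpolation between $V$ and $D(A)$, and the local Lipschitz property \eqref{G-Lip} of $G$, I expect a differential inequality of the form
\begin{equation*}
d|w|^2\le K(t)\,|w|^2\,dt+dM_t,
\end{equation*}
where $K\in L^1_{loc}$ a.s.\ thanks to $u_1,u_2\in L^2_{loc}([0,\infty);D(A))$ and $dM_t$ is a martingale differential. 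A stochastic Gronwall lemma from the Appendix then forces $w\equiv0$, and the truncation keeps all constants uniform.

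With existence of a martingale solution and pathwise uniqueness of the truncated equation established, the Gy\"ongy--Krylov criterion upgrades these to a unique pathwise solution $u_R$ of the truncated equation on $\mathcal{S}$. Defining $\tau:=\inf\{t:\|u_R(t)\|\ge R\}$ with $R>M$ and setting $u:=u_R$ on $[0,\tau]$ produces the local pathwise solution, uniqueness guaranteeing that $u$ is independent of $R$ on overlapping intervals. I expect the main obstacle to lie in the compactness step: closing the a priori estimate against the super-linear bilinear term (the very reason for the cutoff) and, more delicately, extracting enough temporal regularity despite the low time-regularity of the It\^o integral, so as to secure the strong $L^2_tV$ convergence needed to pass $\theta_R(\|\cdot\|)B(\cdot,\cdot)$ to the limit; identifying the stochastic integral under the Skorokhod copies is the other technical point requiring care.
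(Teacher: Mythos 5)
Your overall program (truncate, Galerkin, stochastic compactness via Prokhorov--Skorokhod, pathwise uniqueness, Gy\"ongy--Krylov) is the same as the paper's, but your choice of truncation contains a gap that is fatal in the abstract setting of Proposition \ref{WI}. You cut off with $\theta_R(\|u\|)$ for an arbitrary radius $R>M$ and claim that \eqref{B3} bounds the dangerous term by $C_B\|u^n\|^{3/2}|Au^n|^{3/2}$, which Young's inequality could then absorb. This misquotes \eqref{B3}: taking $u=v$ and $w=Au$ in \eqref{B3} gives
\begin{equation*}
|(B(u,u),Au)|\le C_B\,\|u\|^{\frac12}|Au|^{\frac12}\,\|u\|^{\frac12}|Au|^{\frac12}\,|Au| \;=\; C_B\,\|u\|\,|Au|^2 ,
\end{equation*}
which is \emph{critical}: the dissipation $|Au|^2$ appears with exponent exactly $2$, so it can only be absorbed when the coefficient $C_B\|u\|$ is below a fixed threshold. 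With your cutoff this coefficient is of size $2RC_B$, which is large precisely when $M$ (hence $R$) is large, and the a priori estimate cannot be closed. The subcritical bound $C\|u\|^{3/2}|Au|^{3/2}$ you invoke is the Navier--Stokes--specific estimate \eqref{EB1} of Lemma \ref{ABFN2}; it is not implied by \eqref{B2}--\eqref{B3}, and the extra hypothesis \eqref{HGP} is assumed only for the global result (Theorem \ref{GPT}), not for Proposition \ref{WI}. This is exactly why the paper (following \cite{De-G}) truncates instead on $\|u-u_*\|$ with a \emph{small} radius $\kappa\le \frac{1}{64 C_B}$, where $u_*$ solves the linear problem \eqref{LS} with the same initial data: the cutoff keeps the critical coefficient below the absorption threshold, the terms involving $u_*$ are controlled by the deterministic parabolic estimates \eqref{EFLS}--\eqref{EFLS2}, and the stopping time $\inf\{t:\|u-u_*\|\ge\kappa\}$ is strictly positive for data of arbitrary size $M$ because $\|u(0)-u_*(0)\|=0$. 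Your scheme offers no mechanism reconciling ``small cutoff radius'' with ``large initial data'', which is the actual difficulty the proposition addresses.

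A secondary issue: you run pathwise uniqueness with It\^o's formula for $|w|^2$ (the $H$-norm). In the abstract framework this does not close either, since the $H$-energy identity only produces the dissipation $\|w\|^2$, while the cross term is estimated via \eqref{B2} by $C_B|Au_1|\,\|w\|^2$ with $|Au_1|$ merely in $L^2_t$; this can neither be absorbed by $\|w\|^2$ nor dominated by $K(t)|w|^2$, and the interpolation $\|w\|^2\le |w|\,|Aw|$ requires $|Aw|$, which the $H$-level estimate does not control. The paper's Proposition \ref{LPS} instead applies It\^o's formula to $\|w\|^2$, so the dissipation is $|Aw|^2$ and \eqref{B3} plus Young's inequality yield the Gronwall coefficient $K(t)=C(\|u_1\|^2|Au_1|^2+\|u_2\|^2|Au_2|^2)\in L^1_t$ on the stopped interval (see \eqref{PU9}), together with the localization \eqref{STSK}. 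You should adopt the $V$-norm there as well.
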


Inspired by \cite{De-G}, noting that $G$ satisfies the hypotheses \eqref{G-Bnd}-\eqref{G-Lip}, we introduce the following cut-off problem:
\begin{equation}\label{CS}
    \left\{
    \begin{aligned}
       du+(Au+\theta (||u-u_*||)B(u,u))dt &=\theta (||u-u_*||)G(u)dW,\\
        u(0) &=u_0,
    \end{aligned}
\right.
\end{equation}
where $\theta :\mathbb{R} \rightarrow [0,1] $ is a smooth cut-off function satisfying
\begin{equation}\label{CF}
    \theta(x)=
\begin{cases}
1, &|x| \leq \kappa,\\
0, & |x|\geq 2\kappa,
\end{cases}
\end{equation}
for some constants $\ka \leq \frac{1}{64C_B}$, with $C_B$ being the constant given in \eqref{B2}, and $u_*$ is the solution to the following linear problem:
\begin{equation}\label{LS}
    \left\{
    \begin{aligned}
       \frac{d}{dt}u_*+Au_* &=0,\\
        u_*(0) &=u_0.
    \end{aligned}
\right.
\end{equation}

Similar to \cite{De-G}, we use the following Galerkin scheme to  solve the problem \eqref{CS},
\begin{equation}\label{GS}
    \left\{
    \begin{aligned}
       du^n+(Au^n+\theta(||u^n-u^n_*||)B^n(u^n,u^n))dt &= \theta(||u^n-u^n_*||)G^n(u^n)dW,\\
        u^n(0) &= P_n u_0, 
    \end{aligned}
\right.
\end{equation}
where $B^n(u,u)=P_nB(u,u),\,\,\, G^n(u)=P_nG(u),\,\,\, u^n_*=P_nu_*$, with $P_n$ being the projection from $H$ to $H_n=span \{e_1,\cdots e_n\}$. In the following, we use the notation $B(u)$ instead of $B(u,u)$ for convenience. For each $n\in \mathbb{N}$, the existence of a unique continuous adapted solution $\{u^n(t)\}_{t\geq 0}$, taking value in $H_n$, to the problem \eqref{GS} is standard thanks to the  cancelation property \eqref{B1}, we refer to \cite{Flandoli} for detail.

By projecting the linear problem \eqref{LS} into the finite dimensional space $H_n$, we obtain
\begin{equation}\label{LS2}
    \left\{
    \begin{aligned}
       \frac{d}{dt}u_*^n+Au_*^n &=0,\\
        u_*^n(0) =u_0^n:&=P_nu_0.
    \end{aligned}
\right.
\end{equation}
As in \cite{De-G}, we come to the following conclusion.

\begin{lemma}
For each $p \geq 2$ and $n \in \mathbb{N}$, the solution $u_*^n$ to the problem \eqref{LS2} satisfies the following two estimates:
\begin{equation}\label{EFLS}
    \sup_{t\in[0,T]}||u_*^n||^p + \int_0^T |Au_*^n|^2 ||u_*^n||^{p-2} dt + \left(\int_0^T |Au_*^n|^2 dt\right) ^{\frac{p}{2}} \leq C||u_0||^p,
\end{equation}
and
\begin{equation}\label{EFLS2}
    ||u_*^n||_{W^{1,2}(0,T;H)} \leq C \int_0^T |Au_*^n|^2 dt \leq C||u_0||^2.
\end{equation}
\end{lemma}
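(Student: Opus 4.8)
The plan is to treat \eqref{LS2} as exactly what it is: a linear, autonomous, finite-dimensional ODE in $H_n$, whose solution $u_*^n(t)=\sum_{k=1}^n e^{-\la_k t}(u_0,e_k)e_k$ is smooth in $t$, so that every differentiation below is legitimate and no weak-formulation technicalities arise. Everything will be deduced from a single energy identity. Pairing $\frac{d}{dt}u_*^n+Au_*^n=0$ with $Au_*^n$ in $H$ and using the symmetry of $A$ together with \eqref{AS}, I get
\begin{equation*}
    \frac{1}{2}\frac{d}{dt}||u_*^n||^2 + |Au_*^n|^2 = 0 ,
\end{equation*}
because $(\frac{d}{dt}u_*^n, Au_*^n)=\frac{1}{2}\frac{d}{dt}(Au_*^n,u_*^n)=\frac{1}{2}\frac{d}{dt}||u_*^n||^2$. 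This identity already gives two facts: $||u_*^n(t)||$ is non-increasing, so $\sup_{t\in[0,T]}||u_*^n||\leq ||u_0^n||=||P_nu_0||\leq ||u_0||$; and, integrating over $[0,T]$, $\int_0^T|Au_*^n|^2\,dt=\frac{1}{2}(||u_0^n||^2-||u_*^n(T)||^2)\leq \frac{1}{2}||u_0||^2$.

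From here the three terms of \eqref{EFLS} follow directly. The first is the sup bound raised to the power $p$, $\sup_{t\in[0,T]}||u_*^n||^p\leq ||u_0||^p$. For the middle term, since $x\mapsto x^{p/2}$ is $C^1$ on $[0,\infty)$ for $p\geq 2$, the chain rule applied to the energy identity yields $\frac{d}{dt}||u_*^n||^p=-p\,||u_*^n||^{p-2}|Au_*^n|^2$; integrating over $[0,T]$ gives $\int_0^T||u_*^n||^{p-2}|Au_*^n|^2\,dt=\frac{1}{p}(||u_0^n||^p-||u_*^n(T)||^p)\leq \frac{1}{p}||u_0||^p$. For the third term I raise the bound $\int_0^T|Au_*^n|^2\,dt\leq \frac{1}{2}||u_0||^2$ to the power $p/2$, obtaining $(\int_0^T|Au_*^n|^2\,dt)^{p/2}\leq 2^{-p/2}||u_0||^p$. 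Summing the three estimates produces \eqref{EFLS} with a constant $C=C(p)$ independent of $n$ and $T$.

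For \eqref{EFLS2} I use $\frac{d}{dt}u_*^n=-Au_*^n$ in $H$, so $\int_0^T|\frac{d}{dt}u_*^n|^2\,dt=\int_0^T|Au_*^n|^2\,dt$, while the Poincaré-type inequality $|u|\leq \la_1^{-1}|Au|$ (a consequence of $0<\la_1\leq \la_k$) gives $\int_0^T|u_*^n|^2\,dt\leq \la_1^{-2}\int_0^T|Au_*^n|^2\,dt$. Adding the two controls the (squared) $W^{1,2}(0,T;H)$-norm by $(1+\la_1^{-2})\int_0^T|Au_*^n|^2\,dt$, and invoking $\int_0^T|Au_*^n|^2\,dt\leq \frac{1}{2}||u_0||^2$ from the first paragraph finishes \eqref{EFLS2}.

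I do not expect a genuine obstacle: the whole lemma is a standard linear parabolic energy estimate. The only points requiring a word of care are (i) that all constants, in particular $C(p)$, are independent of $n$, which holds because $P_n$ is an orthogonal contraction that commutes with $A$ (they share the eigenbasis $(e_j)$), so $||P_nu_0||\leq ||u_0||$ and $u_*^n=P_nu_*$; and (ii) the restriction $p\geq 2$, needed so that $x\mapsto x^{p/2}$ is differentiable up to the origin, making the weighted estimate for the middle term valid even where $||u_*^n||$ vanishes.
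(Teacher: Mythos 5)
Your proof is correct. Note that the paper itself gives no argument for this lemma at all --- it simply defers to \cite{De-G} ("As in \cite{De-G}, we come to the following conclusion") --- so your self-contained proof fills in exactly the standard energy argument underlying the cited result: the identity $\frac{1}{2}\frac{d}{dt}\|u_*^n\|^2+|Au_*^n|^2=0$, its $p$-th power version via the chain rule, and the spectral inequality $|u|\le \lambda_1^{-1}|Au|$, all with constants uniform in $n$ because $P_n$ commutes with $A$ and contracts both the $H$- and $V$-norms. One small point worth flagging: as printed, \eqref{EFLS2} is not homogeneous (the left-hand side is degree one in $u$, the right-hand sides degree two); your reading of it as a bound on the \emph{squared} $W^{1,2}(0,T;H)$-norm is the only sensible interpretation, and that is what your argument proves.
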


To prove Proposition \ref{WI}, we first establish some uniform estimates for the approximation solutions $\{u^n\}_{n \in \mathbb{N}}$, then obtain the existence of local martingale solutions to the problem \eqref{ASE} by using the stochastic compactness method. Secondly, the so-called 'Pathwise Uniqueness' proposition is proved, and finally, the existence of pathwise solutions to the problem \eqref{ASE} will be obtained by using the Gyongy-Krylov theory, Lemma \ref{Gyongy-Krylov}.

\subsection{Uniform estimates}
\begin{lemma}\label{UEs}
Under the hypotheses given in Proposition \ref{WI}, for a fixed $p \geq 2$ and any $T>0$, there exists a constant $C=C(p,\ka,||u_0||,T)>0$, such that the approximate solutions $\{u^n\}_{n \in \mathbb{N}}$ constructed in \eqref{GS} satisfying 
%Assume that G satisfies \eqref{G-Bnd}, and $u_0$ is a V-valued $\mathcal{F}_0$ measurable random variable, $\kappa \leq \frac{1}{64C_0}$, let $p\geq 2$ and suppose that
%\begin{equation}\label{H-I}
%    u_0 \in L^q (\Omega;V) \,\,\, for \,\,\,some\,\,\, q\geq \max\{2p,4\}
%\end{equation}
%Then, there exists a finite number $K=K(p,u_0,\kappa)>0$ such that
\begin{equation}\label{UE1}
    \sup_{n \in \mathbb{N}} \mathbb{E}\left(\sup_{t\in[0,T]}||u^n||^p+\int_0^T|Au^n|^2||u^n||^{p-2}dt\right) \leq C,
\end{equation}
\begin{equation}\label{UE2}
    \sup_{n \in \mathbb{N}} \mathbb{E}\left(\int_0^T|Au^n|^2dt\right)^{\frac{p}{2}}\leq C,
\end{equation}

\begin{equation}\label{UE3}
    \sup_{n \in \mathbb{N}} \mathbb{E}\left|\left|\int_0^t \theta(||u^n-u^n_*||)G^n(u^n) dW\right|\right|^p_{W^{\alpha,p}([0,T];H)} \leq C,
\end{equation}
where $\al \in (0,\frac{1}{2})$, and
\begin{equation}\label{UE4}
    \sup_{n \in \mathbb{N}} \mathbb{E}\left|\left|u^n(t)-\int_0^t \theta(||u^n-u^n_*||)G^n(u^n) dW\right|\right|^2_{W^{1,2}([0,T];H)} \leq C.
\end{equation}
\end{lemma}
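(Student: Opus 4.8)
The plan is to work with the difference $v^n := u^n - u_*^n$, which by \eqref{GS} and \eqref{LS2} solves
\begin{equation*}
dv^n + Av^n\,dt = -\theta(\|v^n\|)B^n(u^n,u^n)\,dt + \theta(\|v^n\|)G^n(u^n)\,dW, \qquad v^n(0)=0.
\end{equation*}
Two elementary observations drive everything. First, since $\theta$ is supported in $\{\|v^n\|\le 2\kappa\}$, every term carrying a factor $\theta$ may be estimated under the assumption $\|v^n\|\le 2\kappa$. Second, testing \eqref{LS2} against $Au_*^n$ shows that $\|u_*^n(t)\|$ is nonincreasing, hence $\|u_*^n\|\le\|u_0\|\le M$; consequently on the support of $\theta$ one has $\|u^n\|\le\|v^n\|+\|u_*^n\|\le 2\kappa+M$, so $u^n$ is pointwise bounded there. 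I would first apply the It\^o formula to $\|v^n\|^2$, using the identity $((v^n,Av^n))=|Av^n|^2$ to produce the dissipation $2|Av^n|^2\,dt$, and then to $\|v^n\|^p$ to reach the exponents appearing in \eqref{UE1}.

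The heart of the matter is the nonlinear term $\theta\langle B(u^n,u^n),Av^n\rangle$. Writing $u^n=v^n+u_*^n$ and expanding $B(u^n,u^n)$ into four bilinear pieces, I would bound each via \eqref{B3} (with $w=Av^n\in H$). The diagonal piece gives
\[
|(B(v^n,v^n),Av^n)|\le C_B\|v^n\|\,|Av^n|^2\le 2\kappa C_B\,|Av^n|^2\le\tfrac{1}{32}|Av^n|^2,
\]
by the choice $\kappa\le\frac{1}{64C_B}$ in \eqref{CF}, so it is absorbed into the dissipation. The mixed and pure-$u_*^n$ pieces each carry a factor $|Av^n|^{3/2}$ or $|Av^n|$ which, after Young's inequality, contributes a further small multiple of $|Av^n|^2$ plus a remainder controlled by $\|u_*^n\|^2|Au_*^n|^2$; the latter is deterministically integrable in time since $\|u_*^n\|\le M$ and $\int_0^T|Au_*^n|^2\,dt\le C\|u_0\|^2$ by \eqref{EFLS}--\eqref{EFLS2}. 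This is the step I expect to be the main obstacle: one must keep careful track of how the cut-off smallness and the a priori control of $u_*^n$ conspire to dominate the nonlinearity by the viscous dissipation, leaving only harmless forcing terms.

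For the stochastic terms, the It\^o correction $\theta^2\|G^n(u^n)\|_{L_2(\mathcal H,V)}^2$ is bounded on the support of $\theta$ by \eqref{G-Bnd} together with the pointwise bound $\|u^n\|\le 2\kappa+M$, hence by a constant. After multiplying by $\|v^n\|^{p-2}$ in the $\|v^n\|^p$ computation, the martingale part is treated by the Burkholder--Davis--Gundy inequality and Young's inequality, which lets me absorb a fraction of $\mathbb E\sup_{[0,T]}\|v^n\|^p$ back into the left side. Taking the supremum in time and then expectations yields a bound of the form $\mathbb E\sup_{[0,T]}\|v^n\|^p+c\,\mathbb E\int_0^T\|v^n\|^{p-2}|Av^n|^2\,dt\le C+C\int_0^T\mathbb E\sup_{[0,s]}\|v^n\|^p\,ds$, and the stochastic Gronwall lemma collected in the Appendix closes the estimate uniformly in $n$. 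Returning to $u^n=v^n+u_*^n$ through the triangle inequality and \eqref{EFLS} gives \eqref{UE1}; raising the energy balance to the power $p/2$ and applying Burkholder--Davis--Gundy once more gives \eqref{UE2}.

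Finally, \eqref{UE3} follows from the standard regularity estimate for It\^o integrals in fractional Sobolev spaces (valid for $\alpha<\frac12$), since $\mathbb E\int_0^T\|\theta G^n(u^n)\|_{L_2(\mathcal H,H)}^p\,dt$ is finite by \eqref{G-Bnd} and the pointwise boundedness of $u^n$ on the support of $\theta$. For \eqref{UE4}, I note that $u^n-\int_0^t\theta G^n(u^n)\,dW=P_nu_0-\int_0^t(Au^n+\theta B^n(u^n,u^n))\,ds$ is absolutely continuous with $H$-valued derivative $-(Au^n+\theta B^n(u^n,u^n))$; using \eqref{UE2} with $p=2$ to control $\int_0^T|Au^n|^2\,dt$, and \eqref{B3} together with $\|u^n\|\le 2\kappa+M$ on the support of $\theta$ to control $\int_0^T\theta^2|B^n(u^n,u^n)|^2\,dt\le C\int_0^T|Au^n|^2\,dt$, bounds the derivative in $L^2(0,T;H)$, while the function itself is controlled in $L^2(0,T;H)$ by \eqref{UE1}. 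This gives \eqref{UE4} and completes the four estimates.
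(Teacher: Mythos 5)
Your proposal is correct and takes essentially the same route as the paper: the same decomposition $\bar u^n = u^n - u^n_*$, It\^o's formula for $\|\bar u^n\|^p$, absorption of the nonlinearity into the viscous dissipation via the cut-off choice $\kappa \le \frac{1}{64 C_B}$, the Burkholder--Davis--Gundy inequality combined with the stochastic Gronwall lemma (Lemma \ref{SGL}), and the Flandoli--Gatarek lemma (Lemma \ref{SL}) for \eqref{UE3}. The only differences are cosmetic: you write out explicitly the expansion of the bilinear term and the arguments for \eqref{UE2} and \eqref{UE4} that the paper cites from \cite{De-G}, and you use the pointwise monotonicity bound $\|u_*^n\| \le \|u_0\| \le M$ rather than carrying $\|u_*^n\|$ through the Gronwall step via \eqref{EFLS}.
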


\begin{proof}

The proof of this lemma is similar to Lemma 3.1 in \cite{De-G}, here we only deal with terms containing $G$ and refer to \cite{De-G} for the remaining terms. The constant $C$ in the following may change from line to line.

  Define $\Bar{u}^n=u^n-u^n_*$, then $\Bar{u}^n$ satisfies
$$
      \left\{
    \begin{aligned}
       d\Bar{u}^n+[A\Bar{u}^n+\theta(||\Bar{u}^n||)B^n(\Bar{u}^n+u^n_*)]dt &=\theta(||\Bar{u}^n||)G^n(\Bar{u}^n+u^n_*)dW,\\
        \Bar{u}^n(0) &=0.
    \end{aligned}
\right.
$$
With the Itô formula for $||\Bar{u}^n||^p$, we infer that for $p \geq 2$,
\begin{equation}\label{ue1}
\begin{aligned}
 d||\Bar{u}^n||^p+p|A\Bar{u}^n|^2||\Bar{u}^n||^{p-2} dt
            &=\frac{p}{2} \theta^2(||\Bar{u}^n||)||G^n(\Bar{u}^n+u^n_*)||_{L_2(\mathcal{H},V)}^2||\Bar{u}^n||^{p-2} dt\\
            &\,\,\,\,\,\,+\frac{p(p-2)}{2}\langle \theta(||\Bar{u}^n||)G^n(\Bar{u}^n+u^n_*),A\Bar{u}^n\rangle ^2||\Bar{u}^n||^{p-4}dt\\
            &\,\,\,\,\,\,-p\theta(||\Bar{u}^n||)\langle B^n(\Bar{u}^n+u^n_*),A\Bar{u}^n\rangle ||\Bar{u}^n||^{p-2}dt\\
            &\,\,\,\,\,\,+p||\Bar{u}^n||^{p-2}\langle \theta(||\Bar{u}^n||) G^n(\Bar{u}^n+u^n_*),A\Bar{u}^n\rangle dW\\
            &=(J_1^p+J_2^p+J_3^p)dt+J_4^pdW.
\end{aligned}
\end{equation}
The hypothesis \eqref{G-Bnd} and Young's inequality yield
\begin{equation}\label{ue2}
\begin{aligned}
 |J_1^p| & \leq C \, \theta^2(||\Bar{u}^n||)\beta^2(|\Bar{u}^n+u^n_*|)(1+||\Bar{u}^n+u^n_*||)^2||\Bar{u}^n||^{p-2}\\
 & \leq C \left((1+||u^n_*||)^2+||\Bar{u}^n||^2\right)||\Bar{u}^n||^{p-2} \\
 & \leq C \left((1+||u^n_*||)^p+||\Bar{u}^n||^p\right).
\end{aligned}
\end{equation}
Using \eqref{AS} we know
\begin{equation}\label{ue3}
\begin{aligned}
 |J_2^p| & \leq C \theta^2(||\Bar{u}^n||)||G^n(\Bar{u}^n+u^n_*)||_{L_2(\mathcal{H},V)}^2||\Bar{u}^n||^{p-2} \\
 & \leq C \left((1+||u^n_*||)^p+||\Bar{u}^n||^p\right).
\end{aligned}
\end{equation}
The term $J_3^p$ was treated in Lemma 3.1 in \cite{De-G}, they obtained that
\begin{equation}\label{ue8}
    |J_3^p| \leq \frac{p}{2} ||\Bar{u}^n||^{p-2}|A\Bar{u}^n|^2 + C||u^n_*||^2|Au^n_*|^2.
\end{equation}
Finally, we deal with the term $J_4^p$. For any pair of stopping times $\tau_a$ and $\tau_b$, satisfying $0 \leq \tau_a \leq \tau_b \leq T$, by using Lemma \ref{BDG} and the hypothesis \eqref{G-Bnd}, we have
 \begin{equation}\label{ue4}
\begin{aligned}
  & \mathbb{E} \sup_{\tau_a \leq t \leq \tau_b}\left|\int_{\tau_a}^tJ_4^pdW\right| \\
            &\leq C\mathbb{E}\left(\int_{\tau_a}^{\tau_b}p^2||\Bar{u}^n||^{2(p-2)}\theta^2(||\Bar{u}^n||)\left|\left|\left\langle G^n(\Bar{u}^n+u^n_*),A\Bar{u}^n\right\rangle\right|\right|_{L_2(\mathcal{H},\mathbb{R})}^2 ds\right)^{\frac{1}{2}}\\
            &\leq C \mathbb{E}\left(\int_{\tau_a}^{\tau_b}||\Bar{u}^n||^{2(p-1)}\theta^2(||\Bar{u}^n||)\beta^2(||\Bar{u}^n||+||u^n_*||)(1+||\Bar{u}^n||+||u^n_*||)^2ds\right)^{\frac{1}{2}}\\
            &  \leq C \mathbb{E}\sup_{\tau_a \leq t \leq \tau_b}||\Bar{u}^n||^{p-1} \left(\int_{\tau_a}^{\tau_b}(1+||\Bar{u}^n||+||u^n_*||)^2ds\right)^{\frac{1}{2}}\\
            &\leq \frac{1}{2} \mathbb{E} \sup_{\tau_a\leq t\leq \tau_b}||\Bar{u}^n||^p+C \mathbb{E}\left(\int_{\tau_a}^{\tau_b}(1+||\Bar{u}^n||+||u^n_*||)^2ds\right)^{\frac{p}{2}}\\
            &\leq \frac{1}{2} \mathbb{E} \sup_{\tau_a\leq t\leq \tau_b}||\Bar{u}^n||^p+C\mathbb{E}\int_{\tau_a}^{\tau_b}\left((1+||u^n_*||)^p+||\Bar{u}^n||^p\right)ds.
\end{aligned}
\end{equation}
Integrating \eqref{ue1} from $\tau_a$ to $t$, taking supremum in $t \in [\tau_a , \tau_b]$, taking expectation, and using \eqref{ue2}-\eqref{ue4}, we deduce
\begin{equation}\label{ue5}
    \begin{aligned}
    & \mathbb{E}\left(\sup_{[\tau_a,\tau_b]} ||\Bar{u}^n||^p + \int_{\tau_a}^{\tau_b} |A\Bar{u}^n|^2||\Bar{u}^n||^{p-2} dt\right) \\
    & \leq C \mathbb{E}\left(||\Bar{u}^n(\tau_a)|| + \int_{\tau_a}^{\tau_b} \left(||\Bar{u}^n||^p+1+|Au^n_*|^2||u^n_*||^2+||u^n_*||^p\right)ds
    \right).
    \end{aligned}
\end{equation}
Therefore, by using Lemma \ref{SGL} and \eqref{EFLS}, for any $t \in (0,T]$, it holds that
\begin{equation}\label{ue6}
    \begin{aligned}
    & \mathbb{E}\left(\sup_{[0,t]} ||\Bar{u}^n||^p + \int_0^t |A\Bar{u}^n|^2||\Bar{u}^n||^{p-2} ds\right) \\
    & \leq C\mathbb{E}\left(\int_0^t \left(1+|Au^n_*|^2||u^n_*||^2+||u^n_*||^p\right)ds\right) \\
    & \leq C.
    \end{aligned}
\end{equation}
From \eqref{ue6} to conclude \eqref{UE1} and \eqref{UE2} is similar to Lemma 3.1 in \cite{De-G}.
%\begin{equation}
%\begin{aligned}
%  & \mathbb{E} (\sup_{0 \leq s \leq t}||\Bar{u^n}||^p+\int_{0}^t|A\Bar{u^n}|^2||\Bar{u^n}||^{p-2}dt)\\
%            &\leq C(\kappa,p,u_0)\mathbb{E}\int_0^t(||\Bar{u}^n||^p+(1+|Au^n_*|^2||u^n_*||^2+||u^n_*||^p))ds\\
%           &\leq C(\kappa,p,u_0)\int_0^t(\mathbb{E}\sup_{\tau \in [0,s]}||\Bar{u}^n||^p+\mathbb{E}(1+|Au^n_*|^2||u^n_*||^2+||u^n_*||^p))ds
%\end{aligned}
%\end{equation}

%Using Gronwall's inequality and \eqref{EFLS} we obtain:
%\begin{equation}
% \begin{aligned}
% & \mathbb{E} \sup_{0 \leq t \leq T}||\Bar{u^n}||^p+\int_{0}^t|A\Bar{u^n}|^2||\Bar{u^n}||^{p-2}dt \\
 %           &\leq C(\kappa,p,u_0)\mathbb{E}\int_0^T(1+|Au^n_*|^2||u^n_*||^2+||u^n_*||^p)ds\\
 %           &\leq C(\kappa,p,u_0,T)
%\end{aligned}
%\end{equation}

By using Lemma \ref{SL} and the hypothesis \eqref{G-Bnd}, we conclude that
\begin{equation}\label{ue7}
\begin{aligned}
    & \mathbb{E}\left|\left|\int_0^t \theta(||u^n-u^n_*||)G^n(u^n) dW\right|\right|^p_{W^{\alpha,p}([0,T];H)} \\
    & \leq C\mathbb{E}\left(\int_0^T ||\theta(||u^n-u^n_*||)G^n(u^n)||^p_{L_2(\mathcal{H},H)}\right) \\
    & \leq C \mathbb{E}\left(\int_0^T \beta^p(2\kappa+||u_*^n||)\left(1+|u^n|^p\right)ds\right) \\
    & \leq C\mathbb{E}\int_0^T\left(1+|u^n|^p\right)ds,
   \end{aligned}
\end{equation}
from which we conclude \eqref{UE3} by using \eqref{UE1}. As for \eqref{UE4} we refer the readers to Lemma 3.1 in \cite{De-G} for detail.
\end{proof}
\subsection{Local existence of martingale solutions}
Define the phase spaces
  $$  \chi_u := L^2(0,T;V) \cap  C([0,T];V'),\,\,\,\,\,\,\,\,\,
    \chi_W := C([0,T];\mathcal{H}_0),\,\,\,\,\,\,\,\,\,
    \chi := \chi_u \times \chi_W. $$

Denote by $\mathcal{D}(u^n)$, $\mathcal{D}(W)$ the laws of $u^n$, $W$ respectively, which are the probability measures on $\chi_u$, $\chi_W$ respectively. More precisely,
\begin{equation}
    \mathcal{D}(u^n)(E)=\mathbb{P}(u^n\in E) \,\,\,\,\,\,\,\,\,\mathrm{for \,\,any} \,\,\,E\subset  \chi_u,
\end{equation}
\begin{equation}
    \mathcal{D}(W)(E)=\mathbb{P}(W\in E)\,\,\,\,\,\,\,\,\,\mathrm{for \,\,any} \,\,\,E\subset  \chi_W,
\end{equation}
thus $\mu^n:=\mathcal{D}(u^n)\times \mathcal{D}(W)$ defines a probability measure on the phase space $\chi$.

Let us denote by $\mu_0$ the law of $u_0$ on $V$, the assumption \eqref{WH} on the initial data $u_0$ implies
\begin{equation}\label{AFIL}
    \int_V ||u||^q d\mu_0(u)<\infty,
\end{equation}
for any positive $q$. Thus the uniform estimates established in Lemma \ref{UEs} show that $\{\mu^n\}_{n \in \mathbb{N}}$ is tight, and by the Prokhorov theorem, Lemma \ref{Prokhorov}, it is weakly compact in $\chi$.
\begin{lemma}\label{WC}
The sequence $\{\mu^n\}_{n \in \mathbb{N}}$ is tight
and weakly compact in the phase space $\chi$.

%Assume $\mu_0$ is a probability measure on V which satisfies \eqref{AFIL} for $q \geq 8 $. Then $\{\mu^n\}_{n \geq 1}$ is tight and therefore weakly compact over the phase space $\chi$.
\end{lemma}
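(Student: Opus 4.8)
The plan is to use the standard stochastic compactness method. Since $\mu^n=\mathcal{D}(u^n)\times\mathcal{D}(W)$ is a product measure and $\mathcal{D}(W)$ is a single Borel probability measure on the Polish space $\chi_W=C([0,T];\mathcal{H}_0)$, hence automatically tight, and since the product of two tight families of measures is tight, it suffices to prove that the marginals $\{\mathcal{D}(u^n)\}_{n\in\mathbb{N}}$ form a tight family on $\chi_u=L^2(0,T;V)\cap C([0,T];V')$. Once $\{\mu^n\}$ is shown to be tight, its weak (sequential) compactness follows directly from the Prokhorov theorem, Lemma \ref{Prokhorov}.

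To extract the time regularity of $u^n$, I would split it according to the equation \eqref{GS} as $u^n=J^n+K^n$, where $K^n(t)=\int_0^t\theta(\|u^n-u^n_*\|)G^n(u^n)\,dW$ is the stochastic integral and $J^n=u^n-K^n$. Estimate \eqref{UE3} bounds $\mathbb{E}\|K^n\|^p_{W^{\alpha,p}([0,T];H)}$ uniformly in $n$ for every $\alpha\in(0,\tfrac12)$, and \eqref{UE4} bounds $\mathbb{E}\|J^n\|^2_{W^{1,2}([0,T];H)}$ uniformly; since $W^{1,2}\subset W^{\alpha,2}$ and $W^{\alpha,p}\subset W^{\alpha,2}$ on $[0,T]$, the sum $u^n$ is uniformly bounded in $\mathbb{E}\|\cdot\|^2_{W^{\alpha,2}([0,T];H)}$. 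On the spatial side, \eqref{UE1}--\eqref{UE2} give $\sup_n\mathbb{E}\|u^n\|^2_{L^2(0,T;D(A))}\leq C$. These two bounds supply respectively the fractional-in-time smoothness and the spatial smoothness required by the compactness criteria.

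For tightness in $L^2(0,T;V)$ I would invoke the compact embedding lemmas collected in the Appendix (of fractional Aubin--Lions--Simon type), using the compact embedding $D(A)\hookrightarrow V$ from Section \ref{NH} together with the continuous embedding $V\hookrightarrow H$: a family bounded in $L^2(0,T;D(A))\cap W^{\alpha,2}(0,T;H)$ is relatively compact in $L^2(0,T;V)$. For tightness in $C([0,T];V')$ I would fix $p$ large and $\alpha$ close to $\tfrac12$ so that $\alpha p>1$; then $K^n$ lies in bounded sets of $W^{\alpha,p}(0,T;H)\hookrightarrow C^{\alpha-1/p}([0,T];H)$ and $J^n$ in bounded sets of $W^{1,2}(0,T;H)\hookrightarrow C^{1/2}([0,T];H)$, and since the embedding $H\hookrightarrow V'$ is compact, the resulting H\"older bounds upgrade to relative compactness in $C([0,T];V')$ by the Arzel\`a--Ascoli theorem. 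In both cases Markov's inequality turns the uniform moment bounds into the assertion that, for each $\eta>0$, a suitable sublevel set carrying mass at least $1-\eta$ uniformly in $n$ is relatively compact in $\chi_u$, which is precisely tightness of $\{\mathcal{D}(u^n)\}$.

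Given the uniform estimates of Lemma \ref{UEs}, the substantive analytic work is already in hand, so the only real care lies in choosing $(\alpha,p)$ to satisfy $\alpha\in(0,\tfrac12)$ and $\alpha p>1$ simultaneously, and in assembling the various sublevel sets into a single compact subset of $\chi_u$. This matching of exponents, together with the correct application of the fractional Aubin--Lions--Simon embedding, is where the main, and essentially only, obstacle lies.
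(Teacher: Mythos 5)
Your proposal is correct and follows essentially the same route as the paper, which proves this lemma by combining the uniform estimates of Lemma \ref{UEs} with the Flandoli--Gatarek compact embeddings of Lemma \ref{EL} and then invoking the Prokhorov theorem, deferring the details (the decomposition of $u^n$ into the stochastic integral plus a $W^{1,2}$ remainder, the choice of exponents with $\alpha p>1$, and the Chebyshev argument) to Lemma 4.1 of \cite{De-G}. The argument you spell out---reduction to tightness of $\{\mathcal{D}(u^n)\}$, compactness in $L^2(0,T;V)$ via $L^2(0,T;D(A))\cap W^{\alpha,2}(0,T;H)$, and compactness in $C([0,T];V')$ via the H\"older/Arzel\`a--Ascoli treatment of the two pieces---is exactly that standard proof.
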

\begin{proof}
Applying the compact embedding conclusions in Lemma \ref{EL}, combining with the uniform estimates established in Lemma \ref{UEs}, we can verify that $\{\mu^n\}_{n \in \mathbb{N}}$ is tight, and on this basis we obtain that it is weakly compact over the phase space $\chi$ by directly using Lemma \ref{Prokhorov}. See Lemma 4.1 in \cite{De-G} for further detail.
\end{proof}

Lemma \ref{WC} shows that the sequence $\{\mu^n\}_{n \in \mathbb{N}}$ is weakly compact in $\chi$, so there exists a subsequence $\{\mu^{n_k}\}_{k \in \mathbb{N}}$ weakly convergent in $\chi$, thus using the Skorokhod theorem, Lemma \ref{Skorokhod}, directly on $\{\mu^{n_k}\}_{k \in \mathbb{N}}$ we obtain a new probability space and a new sequence of random variables, which has the laws $\{\mu^{n_k}\}_{k \in \mathbb{N}}$, and converges almost surely in the new probability space, we still denote the above subsequence $\{\mu^{n_k}\}_{k \in \mathbb{N}}$ by $\{\mu^n\}_{n \in \mathbb{N}}$ for convenience.
\begin{proposition}\label{Sk}
 %Suppose that $\mu_0$ is a probability measure on V satisfying \eqref{AFIL} for $q>8$.
 Under the hypotheses in Proposition \ref{WI}, there exists a probability space $(\Tilde{\Omega},\Tilde{\mathcal{F}},\Tilde{\mathbb{P}})$ and a sequence of $\chi$-valued random variables  $\{(\Tilde{u}^n,\Tilde{W}^n)\}_{n \in \mathbb{N}}$, which has the law $\{\mu^n\}_{n \in \mathbb{N}}$, such that
$\{(\Tilde{u}^n,\Tilde{W}^n)\}_{n \in \mathbb{N}}$ converges almost surely, in the topology of $\chi$,  to an element $(\Tilde{u},\Tilde{W})$, i.e.
\begin{equation}\label{Sk1}
    \begin{aligned}
    & \Tilde{u}^n \rightarrow \Tilde{u}\,\,\,\,\,\,\,\,\,\mathrm{in}\,\,L^2(0,T;V) \cap  C([0,T];V'), \\
    & \Tilde{W}^n \rightarrow \Tilde{W}\,\,\,\,\,\,\,\,\,\mathrm{in}\,\,C([0,T];\mathcal{H}_0).
    \end{aligned}
\end{equation}
\end{proposition}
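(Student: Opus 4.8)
The plan is to derive the statement as a direct application of the Skorokhod representation theorem, Lemma \ref{Skorokhod}, since the substantial analytic work — the uniform bounds of Lemma \ref{UEs} and the ensuing tightness and weak compactness of Lemma \ref{WC} — is already in place. In that sense the present proposition is a corollary of what precedes it, and the proof is largely a matter of verifying and invoking the right black box.

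The first step is to confirm the topological hypotheses of Skorokhod's theorem, namely that the phase space $\chi=\chi_u\times\chi_W$ is a separable metric space. The factor $\chi_W=C([0,T];\mathcal{H}_0)$ is Polish since $\mathcal{H}_0$ is a separable Hilbert space and $[0,T]$ is compact; the factor $\chi_u=L^2(0,T;V)\cap C([0,T];V')$, equipped with the topology in which a sequence converges precisely when it converges in each of the two norms, is separable and metrizable because $V$ and $V'$ are separable and $[0,T]$ is compact. A finite product of separable metric spaces is again separable metric, so the theorem applies.

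The second step is the invocation itself. By Lemma \ref{WC} the family $\{\mu^n\}$ is weakly compact on $\chi$, so I would fix a weakly convergent subsequence $\mu^{n_k}\rightharpoonup\mu$ and apply Lemma \ref{Skorokhod} to it. This produces a probability space $(\Tilde{\Omega},\Tilde{\mathcal{F}},\Tilde{\mathbb{P}})$ carrying $\chi$-valued random variables $(\Tilde{u}^n,\Tilde{W}^n)$ with laws $\mu^{n_k}$ and a limit $(\Tilde{u},\Tilde{W})$, for which $(\Tilde{u}^n,\Tilde{W}^n)\to(\Tilde{u},\Tilde{W})$ almost surely in $\chi$. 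After relabelling the subsequence as $\{n\}$, convergence in the product topology of $\chi$ separates into exactly the two assertions of \eqref{Sk1}.

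Because this is essentially a black-box application, the points deserving attention — and the main, if modest, obstacle — are topological and bookkeeping in nature rather than probabilistic: one must check that $\chi_u$ with its intersection topology is genuinely separable metric so that the representation theorem is admissible, and one must record that equality of the laws $\mu^{n_k}$ is preserved by the construction. The latter guarantees that each $\Tilde{W}^n$ has the same law as $W$ and that each $\Tilde{u}^n$ inherits all the moment bounds of Lemma \ref{UEs} (these transfer because the controlling functionals are Borel measurable on $\chi$), which is precisely what the subsequent identification of $(\Tilde{u},\Tilde{W})$ as a local martingale solution will rely on.
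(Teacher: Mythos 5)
Your proposal is correct and follows essentially the same route as the paper: weak compactness of $\{\mu^n\}$ from Lemma \ref{WC}, extraction of a weakly convergent subsequence, a direct application of the Skorokhod theorem (Lemma \ref{Skorokhod}), and relabelling of the subsequence. The extra checks you record (that $\chi$ is a separable metric space and that equality of laws transfers the moment bounds of Lemma \ref{UEs} to $\Tilde{u}^n$) are points the paper leaves implicit but are consistent with its argument.
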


We observe that each pair $(\Tilde{u}^n,\Tilde{W}^n)$ is a solution to
\begin{equation}\label{MGE}
    d\Tilde{u}^n+[A\Tilde{u}^n+\theta(||\Tilde{u}^n-\Tilde{u}^n_*||)B^n(\Tilde{u}^n)]dt=\theta(||\Tilde{u}^n-\Tilde{u}^n_*||)G^n(\Tilde{u}^n)d\Tilde{W}^n,
    \end{equation}
with the initial data
\begin{equation}
        \Tilde{u}^n(0)=\Tilde{u}^n_0 := P_n\Tilde{u}^n(0),
\end{equation}
where $\Tilde{u}^n_*$ satisfies
\begin{equation}
\left\{
\begin{aligned}
     d\Tilde{u}^n_*+A\Tilde{u}^n_*dt &=0, \\
     \Tilde{u}^n_*(0) &=\Tilde{u}^n_0.
\end{aligned}
\right.
\end{equation}
Meanwhile, $\Tilde{W}^n$ is a cylindrical Wiener process relative to $\Tilde{\mathcal{F}}_t^n$, which is the completion of $\sigma(\Tilde{W}^n(s),\Tilde{u}^n(s):s \leq t)$. Techniques similar to those in \cite{Bensoussan} lead to the above conclusions.

By passing to the limit of each term in \eqref{MGE} and introducing an appropriate stopping time we can obtain the existence of local martingale solutions to the problem \eqref{ASE}.

\begin{proposition}\label{PTL}
  Let $\Tilde{\mathcal{S}}=(\Tilde{\Omega},\Tilde{\mathcal{F}},\{\Tilde{\mathcal{F}}_t\}_{t \geq 0},\Tilde{\mathbb{P}},\Tilde{W})$, where $\Tilde{\mathcal{F}}_t$ is the completion of $\sigma(\Tilde{u}(s),\Tilde{W}(s); s \leq t)$, define the stopping time
  \begin{equation}
   \Tilde{\tau} :=\inf_{t \geq 0}\{||\Tilde{u}-\Tilde{u}_*||\geq \kappa\}.
  \end{equation}
  Then $(\Tilde{\mathcal{S}},\Tilde{u},\Tilde{\tau})$ is a local martingale solution to the problem \eqref{ASE}.
\end{proposition}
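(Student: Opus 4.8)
The plan is to pass to the limit $n\to\infty$ in the Galerkin system \eqref{MGE} on the new probability space, first producing a global-in-time martingale solution of the cut-off problem \eqref{CS}, and then to remove the cut-off by localizing with $\Tilde{\tau}$, thereby recovering the original equation \eqref{ASE}.

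First I would transfer the uniform estimates of Lemma \ref{UEs} to the tilded variables: since each $\Tilde{u}^n$ has the same law as $u^n$, the bounds \eqref{UE1}--\eqref{UE4} hold verbatim for $\Tilde{u}^n$. Together with the almost sure convergence \eqref{Sk1} of Proposition \ref{Sk}, these give, along a further subsequence, $\Tilde{u}^n\rightharpoonup\Tilde{u}$ weakly in $L^2(\Tilde{\Omega};L^2(0,T;D(A)))$ and weak-$*$ in $L^2(\Tilde{\Omega};L^\infty(0,T;V))$. By lower semicontinuity of the norms this already yields the $L^2_{loc}(D(A))$ regularity in \eqref{E1}; the $V'$-continuity of $\Tilde{u}$ is immediate from \eqref{Sk1}, while the stronger $C([0,\infty);V)$-continuity required in \eqref{E1} I would recover afterwards from the limiting equation together with an It\^o-formula (energy) argument for $\|\Tilde{u}\|^2$. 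The law of $\Tilde{u}(0)$ equals $\mu_0$ because $\Tilde{u}^n(0)$ has the law of $P_nu_0$, whose laws converge to $\mu_0$.

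Next I would identify the limit equation by passing to the limit term by term in \eqref{MGE}. The linear term is immediate from $\Tilde{u}^n\to\Tilde{u}$ in $L^2(0,T;V)$ tested against $V$, together with $P_n\to I$ strongly. For the nonlinear drift, the a.s. $L^2(0,T;V)$-convergence plus the uniform $D(A)$-bound, the continuity of $\theta$, and the bilinear estimate \eqref{B2} give $\theta(\|\Tilde{u}^n-\Tilde{u}^n_*\|)B^n(\Tilde{u}^n)\to\theta(\|\Tilde{u}-\Tilde{u}_*\|)B(\Tilde{u})$ in the relevant weak sense. The delicate point, and the one I expect to be the main obstacle, is the convergence of the stochastic integral $\int_0^\cdot\theta(\|\Tilde{u}^n-\Tilde{u}^n_*\|)G^n(\Tilde{u}^n)\,d\Tilde{W}^n$. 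Here I would first check that $\Tilde{W}$ is a cylindrical Wiener process relative to the completed filtration $\Tilde{\mathcal{F}}_t=\sigma(\Tilde{u}(s),\Tilde{W}(s):s\le t)$ (a property stable under the almost sure limit, each $\Tilde{W}^n$ being Wiener relative to $\Tilde{\mathcal{F}}^n_t$), and then invoke the standard stochastic-integral convergence lemma of the type used in \cite{Bensoussan,De-G}: using the boundedness and local Lipschitz hypotheses \eqref{G-Bnd}--\eqref{G-Lip}, the cut-off bound, and the almost sure convergence of both the integrands and the integrator, one obtains convergence in probability of these It\^o integrals in $C([0,T];H)$, the uniform moment bounds \eqref{UE3} supplying the uniform integrability needed to pass to the limit. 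This identifies $(\Tilde{\mathcal{S}},\Tilde{u})$ as a global martingale solution of \eqref{CS}.

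Finally I would localize. Since \eqref{CF} gives $\theta\equiv1$ on $\{|x|\le\kappa\}$, on the stochastic interval $[0,\Tilde{\tau}]$ with $\Tilde{\tau}=\inf\{t:\|\Tilde{u}-\Tilde{u}_*\|\ge\kappa\}$ the cut-off factor equals one, so \eqref{CS} reduces exactly to the integral identity \eqref{E2}. To see that $\Tilde{\tau}$ is a strictly positive $\Tilde{\mathcal{F}}_t$-stopping time, I would use $\Tilde{u}(0)=\Tilde{u}_*(0)=\Tilde{u}_0$, so that $\|\Tilde{u}-\Tilde{u}_*\|$ vanishes at $t=0$ and, by the $C([0,\infty);V)$-continuity established above, stays below $\kappa$ on a short random interval. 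Collecting \eqref{E1}, \eqref{E2}, the matching of the initial law, and the positivity of $\Tilde{\tau}$ then shows that $(\Tilde{\mathcal{S}},\Tilde{u},\Tilde{\tau})$ is a local martingale solution of \eqref{ASE}.
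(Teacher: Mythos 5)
Your proposal is correct and follows essentially the same route as the paper: pass to the limit in the Galerkin system on the Skorokhod space, with the key delicate point being the stochastic term, where the cut-off makes the locally Lipschitz and locally bounded noise hypotheses \eqref{G-Bnd}--\eqref{G-Lip} effectively uniform along the sequence (since $||\Tilde{u}^n-\Tilde{u}^n_*||\leq 2\kappa$ on the support of $\theta$ and $||\Tilde{u}^n_*||$ is bounded), so that a.e.\ convergence of the integrands, uniform integrability (Vitali), and the stochastic-integral convergence lemma (Lemma \ref{SCL}) give the limit, after which localization by $\Tilde{\tau}$, where $\theta\equiv 1$, recovers \eqref{ASE}. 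Two minor slips that do not affect the argument: Lemma \ref{SCL} yields convergence of the It\^o integrals in $L^2(0,T;H)$ in probability rather than in $C([0,T];H)$, and the uniform integrability of the integrands comes from the growth bound \eqref{G-Bnd} combined with the moment estimate \eqref{UE1} (with $p>2$), not from \eqref{UE3}, which bounds the stochastic integral itself.
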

\begin{proof}
  We only show the convergence of the stochastic term, while the convergence of the remaining terms and the verification of the solution have been obtained in Section 7 of \cite{De-G}. Obviously, one has
  \begin{equation}\label{MS3}
\begin{aligned}
     & ||\theta(||\Tilde{u}^n-\Tilde{u}^n_*||)G^n(\Tilde{u}^n)-\theta(||\Tilde{u}-\Tilde{u}_*||)G(\Tilde{u})||_{L_2(\mathcal{H},H)}\\
            &\leq\theta(||\Tilde{u}^n-\Tilde{u}^n_*||)||G^n(\Tilde{u}^n)-G(\Tilde{u})||_{L_2(\mathcal{H},H)} +[\theta(||\Tilde{u}^n-\Tilde{u}^n_*||)-\theta(||\Tilde{u}-\Tilde{u}_*||)]||G(\Tilde{u})||_{L_2(\mathcal{H},H)}.
   \end{aligned}
\end{equation}
First we claim the following estimate which will be established below:
\begin{equation}\label{MS2}
    \theta(||\Tilde{u}^n-\Tilde{u}^n_*||)||G^n(\Tilde{u}^n)-G(\Tilde{u})||_{L_2(\mathcal{H},H)} \rightarrow 0 ,\,\,\,\,\,\, \mathrm{for}\,\,\, \mathrm{a.e.}\,\,\,\, \mathrm{(\omega,t)} \in \Tilde{\Omega}\times [0,T].
\end{equation}
Note that
$$   ||G(\Tilde{u})||_{L_2(\mathcal{H},H)}\leq \Tilde{M} < \infty, $$
since $||\Tilde{u}^n-\Tilde{u}^n_*||\rightarrow ||\Tilde{u}-\Tilde{u}_*|| $ and the smoothness of $\theta$, we have
\begin{equation}
    \theta(||\Tilde{u}^n-\Tilde{u}^n_*||) \rightarrow \theta(||\Tilde{u}-\Tilde{u}_*||)\,\,\,\,\,\,\,\mathrm{for}\,\,\,\mathrm{a.e.}\,\,\,\,\mathrm{(\omega,t)} \in \Tilde{\Omega}\times [0,T].
\end{equation}
Therefore, from \eqref{MS3} we conclude that
\begin{equation}\label{MS1}
 ||\theta(||\Tilde{u}^n-\Tilde{u}^n_*||)G^n(\Tilde{u}^n)-\theta(||\Tilde{u}-\Tilde{u}_*||)G(\Tilde{u})||_{L_2(\mathcal{H},H)}\rightarrow 0\,\,\,\,\,\,\mathrm{for}\,\,\,\mathrm{a.e.}\,\,\,\,\mathrm{(\omega,t)} \in \Tilde{\Omega}\times [0,T].
\end{equation}

On the other hand,
 \begin{equation}
\begin{aligned}
      & \sup_{n}\mathbb{E}\left(\int_0^T ||\theta(||\Tilde{u}^n-\Tilde{u}^n_*||)G^n(\Tilde{u}^n)||_{L_2(\mathcal{H},H)}^2 ds\right)\\
            &\leq\sup_{n}\mathbb{E}\left(\int_0^T \theta^2 (||\Tilde{u}^n-\Tilde{u}^n_*||) \beta^2 (|\Tilde{u}^n|)(1+||\Tilde{u}^n||)^2 ds\right) \\
            &\leq C \sup_n \mathbb{E}\int_0^T(1+||\Tilde{u}^n||)^2 ds,
   \end{aligned}
\end{equation}
which means $|| \theta(||\Tilde{u}^n-\Tilde{u}^n_*||)G^n(\Tilde{u}^n) ||_ {L_2(\mathcal{H},H)}$ is uniformly integrable in $L^2 (\Tilde{\Omega}\times [0,T])$. Combining with \eqref{MS1} and using the Vitali convergence theorem, we obtain
\begin{equation}\label{CLP}
   \theta(||\Tilde{u}^n-\Tilde{u}^n_*||)G^n(\Tilde{u}^n) \rightarrow \theta(||\Tilde{u}-\Tilde{u}_*||)G(\Tilde{u}) \,\,\,\,\,\,\mathrm{in}\,\,\,L^2(\Tilde{\Omega};L^2([0,T];L_2(\mathcal{H},H))).
\end{equation}
Moreover, we conclude that the convergence in \eqref{CLP} holds in probability in $L^2([0,T];L_2(\mathcal{H},H))$.
Since $\Tilde{W}^n\rightarrow \Tilde{W}$ in $C([0,T];\mathcal{H}_0)$ in probability (c.f. Proposition \ref{Sk}),
by applying Lemma \ref{SCL} we deduce that
 \begin{equation}
\begin{aligned}
     \int_0^t \theta(||\Tilde{u}^n-\Tilde{u}^n_*||)G^n(\Tilde{u}^n)d\Tilde{W}^n \rightarrow \int_0^t \theta(||\Tilde{u}-\Tilde{u}_*||)G^n(\Tilde{u})d\Tilde{W}\,\,\,\,\,\,&  \mathrm{in}\,\, L^2([0,T];H)\,\,\\
     & \mathrm{in}\,\,\mathrm{probability}.
   \end{aligned}
\end{equation}

Now, let us verify \eqref{MS2}:
 $$\begin{aligned}
             &  \theta \left(||\Tilde{u}^n-\Tilde{u}^n_*||\right)||G^n(\Tilde{u}^n)-G(\Tilde{u})||_{L_2(\mathcal{H},H)}\\
            &\leq  \theta\left(||\Tilde{u}^n-\Tilde{u}^n_*||\right)||G(\Tilde{u}^n)-G(\Tilde{u})||_{L_2(\mathcal{H},H)} +\theta\left(||\Tilde{u}^n-\Tilde{u}^n_*||\right)||Q_nG(\Tilde{u})||_{L_2(\mathcal{H},H)}\\
            &\leq \theta\left(||\Tilde{u}^n-\Tilde{u}^n_*||\right)\beta\left(|\Tilde{u}^n|+|\Tilde{u}|\right)\left|\left| \Tilde{u}^n-\Tilde{u} \right| \right|+\theta\left(||\Tilde{u}^n-\Tilde{u}^n_*||\right)\frac{1}{\lambda_n^{\frac{1}{2}}}||G(\Tilde{u})||_{L_2(\mathcal{H},V)}\\
            &\leq C\left(||\Tilde{u}^n-\Tilde{u}||+\frac{1}{\lambda_n^{\frac{1}{2}}}(1+||\Tilde{u}||)\right),
        \end{aligned}$$
thus \eqref{MS2} holds for almost every $\mathrm{(\Tilde{\omega},t)} \in \Tilde{\Omega} \times  [0,T]$.
\end{proof}
\subsection{Local existence of pathwise solutions}
Now we consider the local pathwise solutions of the problem \eqref{ASE}, we first establish the following 'Pathwise Uniqueness' result.
\begin{proposition}\label{LPS}
  Let $\tau$ be an almost surely positive stopping time. Suppose that $(\Tilde{\mathcal{S}},u^{(1)},\tau)$ and $(\Tilde{\mathcal{S}},u^{(2)}, \tau)$ are two local martingale solutions to the problem \eqref{ASE} relative to the same stochastic basis.
  %Assume $\mu_0$ is a probability measure on V such that \eqref{AFIL} holds for $q \geq 8$.
  Define  $\Omega_0=\{u^{(1)}(0)=u^{(2)}(0)\}$, then $u^{(1)}$ and $u^{(2)}$ are indistinguishable in the sense that
  \begin{equation}\label{PU7}
      \mathbb{P}\left\{\mathbb{I}_{\Omega_0}\left(u^{(1)}(t\land \tau)-u^{(2)}(t\land \tau ) \right) =0; \,\,\,\forall t \geq 0 \right\} = 1.
  \end{equation}
\end{proposition}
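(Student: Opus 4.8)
The plan is to estimate the difference $v:=u^{(1)}-u^{(2)}$ in the $V$-norm and to close with a stochastic Gronwall argument, after reducing to the case of vanishing initial data. Since $\Omega_0=\{u^{(1)}(0)=u^{(2)}(0)\}\in\mathcal{F}_0$, I may multiply every relation below by the bounded $\mathcal{F}_0$-measurable factor $\mathbb{I}_{\Omega_0}$, which commutes with the Itô integral, so it suffices to study $\mathbb{I}_{\Omega_0}\|v\|^2$ starting from $v(0)=0$. Subtracting the two copies of \eqref{E2} and using the bilinearity identity $B(u^{(1)},u^{(1)})-B(u^{(2)},u^{(2)})=B(v,u^{(1)})+B(u^{(2)},v)$, the process $v$ solves, in $H$ on $[0,t\wedge\tau]$,
\[
dv + \big[Av + B(v,u^{(1)}) + B(u^{(2)},v)\big]\,dt = \big[G(u^{(1)})-G(u^{(2)})\big]\,dW, \qquad v(0)=0 \ \text{ on } \Omega_0 .
\]

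First I would apply the Itô formula to $\|v\|^2=|A^{1/2}v|^2$. This is legitimate on the stopped interval: by \eqref{E1} we have $v(\cdot\wedge\tau)\in C([0,\infty);V)$ and $v\,\mathbb{I}_{t\le\tau}\in L^2_{loc}([0,\infty);D(A))$, so the drift lies in $L^2_{loc}(H)$ (using the $D(A)\times D(A)\to H$ continuity of $B$) and, by \eqref{G-Bnd}, the diffusion lies in $L^2_{loc}(L_2(\mathcal{H},V))$. Evaluating $(Av,Av)$ via \eqref{AS}, this yields
\[
\begin{aligned}
d\|v\|^2 + 2|Av|^2\,dt &= -2\big(B(v,u^{(1)})+B(u^{(2)},v),\,Av\big)\,dt \\
&\quad + \|G(u^{(1)})-G(u^{(2)})\|^2_{L_2(\mathcal{H},V)}\,dt + 2\big(G(u^{(1)})-G(u^{(2)}),\,Av\big)\,dW .
\end{aligned}
\]

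The heart of the argument is the bilinear drift. Applying \eqref{B3} to each piece produces bounds of the form $C_B\|v\|^{1/2}|Av|^{3/2}\|u^{(1)}\|^{1/2}|Au^{(1)}|^{1/2}$ and $C_B\|v\|^{1/2}|Av|^{3/2}\|u^{(2)}\|^{1/2}|Au^{(2)}|^{1/2}$, and Young's inequality (splitting $\|v\|^{1/2}|Av|^{3/2}$ with conjugate exponents $4$ and $4/3$) absorbs the top-order factor $|Av|^{3/2}$ into the dissipation $2|Av|^2$, leaving a zeroth-order term $C\big(\|u^{(1)}\|^2|Au^{(1)}|^2+\|u^{(2)}\|^2|Au^{(2)}|^2\big)\|v\|^2$. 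For the noise I use the $V$-variant of \eqref{G-Lip}, giving $\|G(u^{(1)})-G(u^{(2)})\|^2_{L_2(\mathcal{H},V)}\le\beta^2(|u^{(1)}|+|u^{(2)}|)\|v\|^2$. Collecting these I arrive at
\[
d\|v\|^2 + |Av|^2\,dt \le g(t)\,\|v\|^2\,dt + dM_t, \qquad
g(t):=C\big(\|u^{(1)}\|^2|Au^{(1)}|^2+\|u^{(2)}\|^2|Au^{(2)}|^2\big)+\beta^2\big(|u^{(1)}|+|u^{(2)}|\big),
\]
with $M$ the stochastic integral above. The crucial point making the stochastic Gronwall inequality (Lemma \ref{SGL}) applicable is that $g$ is almost surely integrable on $[0,\tau]$: the $C([0,\infty);V)$ regularity gives $\sup_{[0,\tau]}\|u^{(i)}\|^2<\infty$ and the $L^2(D(A))$ regularity gives $\int_0^\tau|Au^{(i)}|^2\,dt<\infty$, whence $\int_0^\tau g\,dt\le\sum_i\sup_{[0,\tau]}\|u^{(i)}\|^2\int_0^\tau|Au^{(i)}|^2\,dt+C<\infty$ a.s. Since $\mathbb{I}_{\Omega_0}v(0)=0$, Lemma \ref{SGL} forces $\mathbb{I}_{\Omega_0}\|v(t\wedge\tau)\|^2=0$ for each $t$, and the $V$-continuity of $v$ upgrades this to all $t$ simultaneously, which is precisely \eqref{PU7}.

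I expect the main obstacle to be the bilinear estimate. With only \eqref{B2}--\eqref{B3} at hand (and no Ladyzhenskaya-type $H$-bound of the form $|(B(v,u^{(1)}),v)|\lesssim|v|^{1/2}\|v\|^{3/2}\|u^{(1)}\|$), the usual energy estimate in $H$ does not close, because the $\|v\|^2$ term it generates cannot be controlled by $|v|^2$; one is therefore forced to work in $V$ and to extract exactly enough of the $|Av|^2$ dissipation to absorb the cubic-in-$|Av|$ term while leaving a coefficient $g$ that is merely $L^1$ in time. A secondary, more technical point is rigorously justifying the $V$-norm Itô formula at this regularity (e.g.\ via the Galerkin/Gelfand-triple framework of $D(A)\subset V\subset D(A)'$, as in \cite{De-G}) and verifying the integrability of $g$ and the martingale-term handling in Lemma \ref{SGL} on the stopped interval.
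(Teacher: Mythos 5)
Your overall strategy --- an energy estimate for $w=u^{(1)}-u^{(2)}$ in the $V$-norm after multiplying by $\mathbb{I}_{\Omega_0}$, the bilinearity decomposition, the drift estimate via \eqref{B3} and Young's inequality so that the $|Aw|^{3/2}$ factor is absorbed into the dissipation leaving the coefficient $\|u^{(1)}\|^2|Au^{(1)}|^2+\|u^{(2)}\|^2|Au^{(2)}|^2$, the $V$-Lipschitz hypothesis \eqref{G-Lip} for the noise, BDG for the martingale term, and a stochastic Gronwall argument --- is exactly the paper's proof of Proposition \ref{LPS}. However, there is a genuine gap at what you yourself call the ``crucial point'': Lemma \ref{SGL} does \emph{not} apply under mere almost-sure finiteness of $\int_0^\tau g\,dt$. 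That lemma requires $\int_0^\tau R\,ds < M$ almost surely for a \emph{fixed deterministic constant} $M$, as well as $\mathbb{E}\int_0^\tau(RX+Z)\,ds<\infty$, and its output constant $C=C(C_0,T,M)$ depends on $M$. An almost surely finite random variable need not admit any such uniform bound; moreover, under \eqref{E1} the quantity $\mathbb{E}\bigl[\sup_{[0,\tau]}\|u^{(i)}\|^2\int_0^\tau|Au^{(i)}|^2\,ds\bigr]$ is a product of merely integrable random variables and need not be finite, so even the first hypothesis of the lemma can fail. As written, your invocation of Lemma \ref{SGL} is therefore invalid, and this is not a removable technicality: it is the step the whole uniqueness proof hinges on.

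The paper repairs precisely this point with a two-step structure. First it introduces the truncation stopping time \eqref{STSK},
\[
\xi^{(K)}:=\inf_{t>0}\Bigl\{\sup_{s\in[0,t]}\bigl(\|u^{(1)}\|^2+\|u^{(2)}\|^2\bigr)+\int_0^t\bigl(\|u^{(1)}\|^2|Au^{(1)}|^2+\|u^{(2)}\|^2|Au^{(2)}|^2\bigr)ds\geq K\Bigr\},
\]
and runs your estimates only on $[0,\tau\land\xi^{(K)}\land T]$, where the Gronwall hypotheses hold with $M=K$ and all coefficients (including $\beta^2(|u^{(1)}|+|u^{(2)}|)$) are bounded by constants $C(K)$; this yields $\mathbb{I}_{\Omega_0}\|w(\tau\land\xi^{(K)}\land T)\|^2=0$ almost surely. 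Second, it removes the truncation by proving $\mathbb{P}(\xi^{(K)}<\tau)\to 0$ as $K\to\infty$ --- a separate argument, see \eqref{L3}--\eqref{L4}, which uses Chebyshev's inequality together with the $L^2(\Omega)$ moments built into the definition \eqref{E1} of a solution, not just pathwise finiteness. Your proof needs both steps (or else a genuinely stronger stochastic Gronwall inequality than Lemma \ref{SGL}, which the paper does not provide); with that modification the rest of your outline coincides with the paper's argument.
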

\begin{proof}
  Define $w=u^{(1)}-u^{(2)}$, then the system for $w$ reads
\begin{equation}
\begin{aligned}
dw &+\left[Aw+B(u^{(1)})-B(u^{(2)})\right]dt =\left[G(u^{(1)})
-G(u^{(2)})\right]dW,
\end{aligned}
\end{equation}
with the initial data
\begin{equation}
    w(0)=u^{(1)}(0)-u^{(2)}(0).
\end{equation}
The Itô formula for $||w||^2$ yields
\begin{equation}\label{PU1}
\begin{aligned}
d||w||^2+2|Aw|^2 &= 2\left \langle B(u^{(2)})-B(u^{(1)}),Aw\right\rangle dt+\left|\left|G(u^{(1)})-G(u^{(2)})\right|\right|_{L^2(\mathcal{H},V)}^2 dt \\
& \,\,\,\,\,\,+ 2 \left\langle G(u^{(1)})-G(u^{(2)}),Aw \right \rangle dW.
\end{aligned}
\end{equation}

%Noting that there exists a sufficiently large constant $N$ such that
%\begin{equation}
%     \int_0^\tau \left(||u^{(1)}||^2|Au^{(1)}|^2+||u^{(2)}||^2|Au^{(2)}|^2 %\right) ds \leq  N,\,\,\,\,\,\,\,\,\,a.s.
%\end{equation}
%since $(\Tilde{\mathcal{S}},u^{(1)},\tau)$ and $(\Tilde{\mathcal{S}},u^{(2)}, \tau)$ are two local martingale solutions.

For $K>0$, introducing the stopping time
\begin{equation}\label{STSK}
    \xi^{(K)}:=\inf_{t > 0} \left \{\sup_{s \in [0,t]} \left(||u^{(1)}||^2+||u^{(2)}||^2 \right) +\int_0^t \left( ||u^{(1)}||^2|Au^{(1)}|^2+||u^{(2)}||^2|Au^{(2)}|^2 \right) ds \geq K \right \}.
\end{equation}

Fix $K>0$ and $T>0$, for any stopping times $\tau_a$ and $\tau_b$ such that $0\leq \tau_a \leq \tau_b \leq \tau \land \xi^{(K)} \land T$. Integrating \eqref{PU1} in time, taking supremum, multiplying by $\mathbb{I}_{\Omega_0}$, and taking the expectation, we obtain
\begin{equation}\label{PU2}
\begin{aligned}
& \mathbb{E}  \mathbb{I}_{\Omega_0}\left(\sup_{s \in [\tau_a,\tau_b]}||w||^2+\int_{\tau_a}^{\tau_b}|Aw|^2 ds\right) \\
&=\mathbb{E} \mathbb{I}_{\Omega_0} ||w(\tau_a)||^2+2\mathbb{E}\mathbb{I}_{\Omega_0}\int_{\tau_a}^{\tau_b} \left |\left \langle B(u^{(2)})-B(u^{(1)}),Aw \right \rangle \right| ds \\
&\,\,\,\,\,\,+\mathbb{E}\mathbb{I}_{\Omega_0}\int_{\tau_a}^{\tau_b}  \left|\left|G(u^{(1)}) - G(u^{(2)})\right|\right|_{L^2(\mathcal{H},V)}^2 ds\\
&\,\,\,\,\,\,+2\mathbb{E}\mathbb{I}_{\Omega_0}\sup_{s \in [\tau_a,\tau_b]}\left|\int_{\tau_a}^s \left\langle
\left(G(u^{(1)}) -  G(u^{(2)} ) \right) , Aw \right\rangle dW\right|\\
&:=\mathbb{E} \mathbb{I}_{\Omega_0} ||w(\tau_a)||^2+J_1+J_2+J_3.
\end{aligned}
\end{equation}
The hypothesis \eqref{G-Lip} implies
\begin{equation}\label{PU3}
\begin{aligned}
J_2
& \leq  C \mathbb{E} \mathbb{I}_{\Omega_0} \int_{\tau_a}^{\tau_b} \beta^2 \left(|u^{(1)}|+|u^{(2)}|\right)\,||u^{(1)}-u^{(2)}||^2 \, ds\\
&\leq C(K) \mathbb{E} \mathbb{I}_{\Omega_0} \int_{\tau_a}^{\tau_b} ||w||^2 \, ds.
\end{aligned}
\end{equation}
By using Lemma \ref{BDG}, the hypothesis \eqref{G-Lip} and Young's inequality, we find
\begin{equation}\label{PU8}
    \begin{aligned}
    J_3 & \leq C \mathbb{E}\mathbb{I}_{\Omega_0} \left(\int_{\tau_a}^{\tau_b} \left|\left|G(u^{(1)}) - G(u^{(2)})\right|\right|_{L^2(\mathcal{H},V)}^2  ||w||^2 ds\right)^{\frac{1}{2}}  \\
    & \leq C \mathbb{E}\mathbb{I}_{\Omega_0} \left(\sup_{s \in [\tau_a,\tau_b]} ||w||^2 \int_{\tau_a}^{\tau_b} \left|\left|G(u^{(1)}) - G(u^{(2)})\right|\right|_{L^2(\mathcal{H},V)}^2 ds \right)^{\frac{1}{2}} \\
    & \leq \frac{1}{2} \mathbb{E}\mathbb{I}_{\Omega_0} \sup_{s \in [\tau_a,\tau_b]} ||w||^2 + C(K)\, \mathbb{E}\mathbb{I}_{\Omega_0}\int_{\tau_a}^{\tau_b} ||w||^2 ds.
    \end{aligned}
\end{equation}
The term $J_1$ was estimated by using the bilinearity of $B$ and the hypothesis \eqref{B3}, see \cite[Proposition 5.1]{De-G}, they obtained that
\begin{equation}\label{PU9}
    \begin{aligned}
    J_1  & \leq  \frac{1}{4} \mathbb{E} \mathbb{I}_{\Omega_0} \int_{\tau_a}^{\tau_b} |Aw|^2 ds + C \mathbb{E} \mathbb{I}_{\Omega_0} \int_{\tau_a}^{\tau_b} \left(||u^{(1)}||^2|Au^{(1)}|^2 + ||u^{(2)}||^2|Au^{(2)}|^2\right) ||w||^2 \, ds  \\
    & \leq  \frac{1}{4} \mathbb{E} \mathbb{I}_{\Omega_0} \int_{\tau_a}^{\tau_b} |Aw|^2 ds + C(K) \mathbb{E} \mathbb{I}_{\Omega_0} \int_{\tau_a}^{\tau_b} ||w||^2 \, ds.
    \end{aligned}
\end{equation}
Combining \eqref{PU2}-\eqref{PU9}, we deduce
\begin{equation}\label{PU6}
\begin{aligned}
 \mathbb{E}\left(\sup_{t \in [\tau_a,\tau_b]}\mathbb{I}_{\Omega_0}||w||^2+\int_{\tau_a}^{\tau_b}\mathbb{I}_{\Omega_0}|Aw|^2 ds\right)\leq C \mathbb{E}\mathbb{I}_{\Omega_0}||w(\tau_a)||^2 +C(K)\mathbb{E}\int_{\tau_a}^{\tau_b} \mathbb{I}_{\Omega_0}||w||^2 ds,
\end{aligned}
\end{equation}
where C and C(K) are independent of $\tau_a$ and $\tau_b$, thus from Lemma \ref{SGL}, we conclude that
\begin{equation}
    \mathbb{E}\sup_{t \in [0,\tau \land \xi^{(K)} \land T]}\mathbb{I}_{\Omega_0}||w||^2 + \int_0^{\tau \land \xi^{(K)} \land T}\mathbb{I}_{\Omega_0}|Aw|^2 ds \leq C(K,T)\mathbb{E}\mathbb{I}_{\Omega_0}||w(0)||^2=0,
\end{equation}
which implies
\begin{equation}\label{L2}
    \mathbb{E}\sup_{t \in [0,\tau \land \xi^{(K)} \land T]}\mathbb{I}_{\Omega_0}||w||^2 =0,
\end{equation}
in particular, we have
\begin{equation}\label{M}
   \mathbb{I}_{\Omega_0}||w(\tau \land \xi^{(K)} \land T)||^2 =0,\,\,\,\,\,\,\mathrm{a.s.}
\end{equation}
Note that
\begin{equation}\label{L3}
    \begin{aligned}
    \mathbb{P} \left( \{ \xi^{(K)} < \tau \} \right)  \leq \mathbb{P} \left(  \{ \xi^{(K)} < \tau \} \cap \left \{ \tau < \infty \right \} \right) + \mathbb{P} \left( \{ \xi^{(K)} < \tau \} \cap \{ \tau = \infty \} \right). 
    \end{aligned}
\end{equation}
As in \cite[Propositon 5.1]{De-G}, we know that $\lim_{K\rightarrow \infty} \xi^{(K)} = \infty $ as $\tau = \infty$, hence the second term on the right hand side of \eqref{L3} vanishes. For the first term, the Chebyshev's inequality yields
\begin{equation}\label{L4}
    \begin{aligned}
    & \mathbb{P} \left(\{ \xi^{(K)} < \tau \} \cap \{ \tau < \infty \} \right) \\ 
    & \leq \mathbb{P} \left ( \left \{ \sup_{s \in [0,\tau]} \left( ||u^{(1)}||^2+||u^{(2)}||^2 \right) +\int_0^ \tau \left( ||u^{(1)}||^2|Au^{(1)}|^2+||u^{(2)}||^2|Au^{(2)}|^2 \right) ds \geq K  \right \} \cap \{ \tau < \infty \} \right) \\ 
    & \leq C \left ( \frac{1}{K} +\frac{1}{K^{\frac{1}{2}}} \right ) \mathbb{E} \left (\sup_{s \in [0,\tau]} \left (||u^{(1)}||^2+||u^{(2)}||^2 \right ) \right ) + \frac{C}{K^{\frac{1}{2}}} \mathbb{E} \mathbb{I}_{\{ \tau < \infty \}} \left( \int_0^ \tau \left( |Au^{(1)}|^2+|Au^{(2)}|^2 \right) ds \right),
    \end{aligned}
\end{equation}
the right hand side of \eqref{L4} tends to $0$ as $K \rightarrow \infty $ since \eqref{E1}, so dose the left hand side of \eqref{L3}. Therefore, for any $T>0$, from \eqref{M}-\eqref{L4}, we conclude that
\begin{equation}
   \mathbb{P} \left( \mathbb{I}_{\Omega_0}||w(\tau \land T)||^2 =0 \right) = 1.
\end{equation}
Following the same arguments as in \cite[Proposition 4.1]{G-H-Z}, we finally conclude \eqref{PU7}, the proof is complete.
\end{proof}

With Propositions \ref{PTL} and \ref{LPS} in hand, we can use Lemma \ref{Gyongy-Krylov} to obtain the existence of local pathwise solutions for the problem \eqref{ASE}, we refer to \cite{De-G} or \cite{G-H} for further detail of the procedure. Thus we complete the proof of Proposition \ref{WI}.

Via a localization argument we can extend Proposition \ref{WI} to a class of general initial data $u_0\in L^2(\Om,V)$. Indeed, for any $k \in \mathbb{N}$, the initial data
$$u_{k,0}=u_0 \mathbb{I}_{k \leq ||u_0|| < k+1},$$
satisfying
$$||u_{k,0}|| \leq k+1\,\,\,\,\,\,\,\,\,\mathrm{a.s.},$$
thus from Proposition \ref{WI} we can obtain the local pathwise solutions $(u_k,\tau_k)$ to the problem \eqref{ASE} with respect to the initial data $u_{k,0}$. Let
$$u=\sum_{k=0}^\infty u_k \mathbb{I}_{k \leq ||u_0|| < k+1},\,\,\,\,\,\,\,\,\,\mathrm{and}\,\,\,\,\,\,\,\,\, \tau=\sum_{k=0}^\infty\tau_k\mathbb{I}_{k \leq ||u_0||<k+1}.$$
One can show that $(u,\tau)$ is a local pathwise solution to the problem \eqref{ASE} with respect to the initial data $u_0$. We refer to \cite{G-H-Z} for specific processing procedure. To sum up the above discussion, we can draw the following conclusion.
\begin{proposition}\label{GI}
For a fixed stochastic basis $\mathcal{S}=(\Om, \mathcal{F}, (\mathcal{F}_t)_{t \geq 0}, \mathbb{P}, W)$. Assume that $u_0$ is a $V$-valued random variable such that $u_0$ is $\mathcal{F}_0$-measurable with $u_0\in L^2(\Om,V)$ and \eqref{G-Bnd}-\eqref{G-Lip} hold for $G$. Then there exists a unique local pathwise solution $(u, \tau)$ to the problem \eqref{ASE}.
\end{proposition}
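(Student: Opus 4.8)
The plan is to promote the local existence result of Proposition \ref{WI}, which is valid for a.s.\ bounded initial data, to an arbitrary $u_0 \in L^2(\Omega,V)$ by means of the decomposition already indicated above, and then to verify that the glued pair $(u,\tau)$ satisfies every requirement of Definition \ref{Def2}. First I would record the measurable partition $A_k := \{k \le \|u_0\| < k+1\}$, $k \ge 0$, which covers $\Omega$ up to a null set because $\|u_0\| < \infty$ a.s.; each $A_k$ belongs to $\mathcal{F}_0$ since $u_0$ is $\mathcal{F}_0$-measurable. On each $A_k$ the truncated datum $u_{k,0} = u_0 \mathbb{I}_{A_k}$ is bounded by $k+1$ a.s., so Proposition \ref{WI} furnishes a local pathwise solution $(u_k,\tau_k)$, and I then set $u = \sum_k u_k \mathbb{I}_{A_k}$ and $\tau = \sum_k \tau_k \mathbb{I}_{A_k}$.

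The routine verifications come first. Adaptedness of $u$ holds because each $u_k$ is $\mathcal{F}_t$-adapted and each $\mathbb{I}_{A_k}$ is $\mathcal{F}_0$-measurable, hence $\mathcal{F}_t$-measurable; that $\tau$ is a stopping time follows from $\{\tau \le t\} = \bigcup_k (\{\tau_k \le t\} \cap A_k)$, a countable union of $\mathcal{F}_t$-sets, and $\tau > 0$ a.s.\ because on each $A_k$ one has $\tau = \tau_k > 0$ a.s. The regularity \eqref{E1} is inherited piece by piece: on $A_k$ the trajectory $u$ coincides with $u_k$ at all times, so $u(\cdot \wedge \tau) \in L^2(\Omega;C([0,\infty);V))$ and $u\mathbb{I}_{t \le \tau} \in L^2(\Omega;L^2_{loc}([0,\infty);D(A)))$ follow by summing the corresponding bounds over the disjoint sets $A_k$.

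The substantive step is the integral identity \eqref{E2}. Here I would argue set by set: multiplying \eqref{E2} by $\mathbb{I}_{A_k}$, it suffices to show that on $A_k$ the identity for $(u,\tau)$ reduces to the one already satisfied by $(u_k,\tau_k)$. Since $u = u_k$ pointwise on $A_k$ for every time and $u(0) = u_0 = u_{k,0} = u_k(0)$ there, the deterministic drift terms match immediately. The delicate point is the stochastic term: one must justify that $\mathbb{I}_{A_k} \int_0^{t\wedge\tau} G(u)\,dW = \mathbb{I}_{A_k}\int_0^{t\wedge\tau_k} G(u_k)\,dW$. This is precisely the locality of the It\^o integral with respect to the $\mathcal{F}_0$-measurable set $A_k$: as $\mathbb{I}_{A_k}$ is $\mathcal{F}_0$-measurable and therefore predictable, it may be pulled inside the integral, and on $A_k$ the integrand $G(u)$ agrees with $G(u_k)$ up to the common truncation $\tau = \tau_k$. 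I expect this locality argument, keeping careful track of the interplay between the indicator, the stopping-time cutoff, and the construction of the integral against $W$, to be the main obstacle; the remainder is bookkeeping.

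Finally, uniqueness is immediate from the pathwise uniqueness already in hand: if $(u,\tau)$ and $(v,\tau)$ are two local pathwise solutions issued from the same datum $u_0$, then Proposition \ref{LPS} applied with $\Omega_0 = \Omega$, where the initial data coincide, forces them to be indistinguishable on $[0,\tau]$, which yields the asserted uniqueness and completes the proof.
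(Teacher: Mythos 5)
Your proposal follows exactly the paper's route: the same $\mathcal{F}_0$-measurable partition $\{k \le \|u_0\| < k+1\}$, the same truncated data $u_{k,0}=u_0\mathbb{I}_{k \leq \|u_0\| < k+1}$ solved via Proposition \ref{WI}, the same glued pair $u=\sum_k u_k \mathbb{I}_{A_k}$, $\tau=\sum_k \tau_k \mathbb{I}_{A_k}$, and uniqueness via Proposition \ref{LPS}. The only difference is that you spell out the verification (adaptedness, locality of the It\^o integral with respect to the $\mathcal{F}_0$-measurable indicators) that the paper delegates to the reference \cite{G-H-Z}, and your account of that verification is correct.
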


Finally, with Proposition \ref{GI} in hand, we can go from the local pathwise solutions to the maximal pathwise solutions, we refer to \cite{Jacob} or \cite{G-H-Z2} for detail. The proof of Theorem \ref{MPS} is now completed.

\section{Global existence of pathwise solutions in positive probability}\label{GPS}
Let $(u,\{\tau_n\}_{n \in \mathbb{N}},\tau^M)$ be the maximal pathwise solution to the problem \eqref{ASE}. The goal of this section is to show that the maximal pathwise solution is global in time in positive probability when the initial data is properly small. Inspired by \cite{KIM}, we need to utilize the following key lemma, whose proof is similar to Lemma 5.1 in \cite{KIM}, which we shall give in Appendix \ref{PKL} for completeness.
\begin{lemma}\label{GP}
 For any $\xi>0$, let
$$\si_\xi := \inf  \{t>0: ||u(t)||\geq \xi \},$$
then $\si_\xi<\tau^M$ on the set $\{\tau^M<\infty\}$. In other words, for almost surely $\omega \in \Omega $, either $\tau^M = \infty $ or $\si_\xi < \tau^M.$
\end{lemma}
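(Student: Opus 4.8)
The plan is to argue by contradiction. Suppose the assertion fails, so that the event
$$\Gamma := \{\tau^M < \infty\} \cap \{\sigma_\xi \geq \tau^M\}$$
has positive probability. On $\Gamma$, the definition of $\sigma_\xi$ forces $\|u(t)\| < \xi$ for every $t < \tau^M$, hence $\sup_{t \in [0,\tau^M)}\|u(t)\|^2 \leq \xi^2 < \infty$. Confronting this with the blow-up alternative \eqref{BU}, which holds almost surely on $\{\tau^M < \infty\}$: since the supremum term is finite on $\Gamma$, we are forced to have $\int_0^{\tau^M}|Au|^2\,ds = \infty$ almost surely on $\Gamma$. The entire proof then reduces to showing the opposite, namely that the dissipation integral stays finite as long as $\|u\|$ remains below $\xi$; this will contradict the previous line and yield $\mathbb{P}(\Gamma)=0$, which is exactly the claim.

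To control the dissipation I would run an energy estimate in the $V$-norm. Applying the It\^o formula to $\|u\|^2 = |A^{1/2}u|^2$ along a solution of \eqref{ASE} (justified by the regularity \eqref{E1}, $u \in C([0,\infty);V)\cap L^2_{loc}([0,\infty);D(A))$) and using $((u,Au)) = |Au|^2$, one obtains
\begin{equation*}
d\|u\|^2 + 2|Au|^2\,dt = \left[-2(Au,B(u,u)) + \|G(u)\|_{L_2(\mathcal{H},V)}^2\right]dt + 2((u,G(u)\,dW)).
\end{equation*}
The decisive step is the nonlinear term. By hypothesis \eqref{HGP}, $|(Au,B(u,u))| \leq C\|u\|^a |Au|^b$ with $b<2$, so Young's inequality gives
\begin{equation*}
2|(Au,B(u,u))| \leq |Au|^2 + C\|u\|^{\frac{2a}{2-b}},
\end{equation*}
and one power of $|Au|^2$ is absorbed into the dissipation on the left-hand side. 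For the noise, \eqref{G-H2} bounds the It\^o correction by $\alpha^2\|u\|^2$, while the quadratic variation of the stochastic integral is estimated by Cauchy--Schwarz in $V$ and \eqref{G-H2}, namely $\sum_k((u,G(u)h_k))^2 \leq \|u\|^2\|G(u)\|_{L_2(\mathcal{H},V)}^2 \leq \alpha^2\|u\|^4$. On the set $\{\|u\|\leq \xi\}$ every resulting drift and bracket integrand is bounded by a constant $C(\xi)$.

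With these bounds I would stop at $\sigma_\xi \wedge \tau_n \wedge T$, using the announcing times $\tau_n$ of the maximal solution and a fixed $T>0$; on this interval $\|u\|\leq\xi$ throughout, so the stochastic integral is a genuine square-integrable martingale and taking expectations eliminates it, producing
\begin{equation*}
\mathbb{E}\int_0^{\sigma_\xi \wedge \tau_n \wedge T}|Au|^2\,ds \leq \mathbb{E}\|u_0\|^2 + C(\xi)\,T,
\end{equation*}
a bound uniform in $n$. Letting $n\to\infty$ by monotone convergence gives $\int_0^{\sigma_\xi\wedge\tau^M\wedge T}|Au|^2\,ds<\infty$ almost surely; since on $\Gamma$ one has $\sigma_\xi\wedge\tau^M = \tau^M < \infty$, letting $T\to\infty$ yields $\int_0^{\tau^M}|Au|^2\,ds < \infty$ almost surely on $\Gamma$, contradicting the conclusion drawn from \eqref{BU} and hence forcing $\mathbb{P}(\Gamma)=0$. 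The hard part is precisely the nonlinear absorption: everything hinges on the exponent condition $b<2$ in \eqref{HGP}, which is what lets $|Au|^b$ be dominated by the dissipation $|Au|^2$ (note that \eqref{B3} alone would only give the non-absorbable bound $C_B\|u\|\,|Au|^2$). One must also be careful that the localization by $\sigma_\xi\wedge\tau_n\wedge T$ genuinely renders the stochastic integral a mean-zero martingale before taking expectations, and that the blow-up criterion is invoked in the sharp form \eqref{BU} so that finiteness of the supremum term isolates the dissipation integral.
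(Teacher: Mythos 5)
Your proof is correct, but its concluding mechanism is genuinely different from the paper's. Both arguments share the same energy core: It\^o's formula for $\|u\|^2$, absorption of the nonlinearity through \eqref{HGP} and Young's inequality (where $b<2$ is decisive, exactly as you emphasize), and the bounds \eqref{G-H2} on the It\^o correction and on the quadratic variation, all localized by $\sigma_\xi\wedge\tau_n$ so that the relevant integrands are dominated by constants depending on $\xi$. The divergence lies in how the stochastic integral is disposed of. You stop it, observe that the bounded quadratic variation makes the stopped integral a genuine square-integrable martingale, take expectations to kill it, and obtain the uniform-in-$n$ bound $\mathbb{E}\int_0^{\sigma_\xi\wedge\tau_n\wedge T}|Au|^2\,ds \leq \mathbb{E}\|u_0\|^2 + C(\xi)T$; monotone convergence in $n$ then makes the dissipation integral finite a.s.\ on $\Gamma$, contradicting \eqref{BU}. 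The paper never takes expectations of the energy identity: working pathwise on $A_k^n := \{\tau_n\leq\sigma_\xi\}\cap\{\tau_n\leq k\}$ and using the sharp announcing property \eqref{FBU} (the equality $=n$, not merely \eqref{BU}), it shows the stochastic integral must exceed $n/3$ there while its quadratic variation is at most $\alpha^2\xi^4 k<n$, and then invokes the tail inequality of Liptser--Shiryaev \cite{RS} to conclude $\mathbb{P}(A_k^n)\leq 9/n\to 0$. Your route is more elementary (optional stopping plus monotone convergence, no tail inequality) and needs only the weak form \eqref{BU} of the blow-up criterion together with some announcing sequence; its price is the moment hypothesis $\mathbb{E}\|u_0\|^2<\infty$, which is legitimately available here since Theorem \ref{MPS} assumes $u_0\in L^2(\Omega;V)$, but which the paper's purely pathwise argument avoids entirely (on its bad events one automatically has $\|u_0\|\leq\xi$, so no integrability of $u_0$ is ever used). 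If you wished to remove that dependence, you could multiply the stopped energy inequality by the $\mathcal{F}_0$-measurable indicator $\mathbb{I}_{\{\|u_0\|\leq\xi\}}$ before taking expectations, which restores the full generality of the paper's proof.
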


\noindent \textbf{Proof of Theorem \ref{GPT}:}
%Ito's formula for $||u||^2$ yields:
%$$d||u||^2+|Au|^2dt =-2(B(u),Au)dt + ||G(u)||_{L_2(\mathcal{H};V)}^2 dt +2((G(u),u))dW.$$
For any $\eta>0$ and $\la \in (0,1)$, the Itô formula for $(||u||^2+\eta)^{\frac{\la}{2}}$ yields
\begin{equation}\label{G1}
\begin{aligned}
 d\left(||u||^2+\eta\right)^{\frac{\la}{2}}  &= \la\left(||u||^2+\eta\right)^\frac{\la-2}{2}\left\{-|Au|^2-(B(u),Au)+\frac{1}{2}||G(u)||_{L_2(\mathcal{H},V)}\right\}dt \\
 &\,\,\,\,\,\,+ \frac{\la(\la-2)}{2}\left(||u||^2+\eta\right)^{\frac{\la-4}{2}} ||((G(u),u))_{L_2(\mathcal{H},\mathbb{R})}||^2 \, dt \\
 &\,\,\,\,\,\,+\la \left(||u||^2+\eta\right)^{\frac{\la-2}{2}}((G(u),u)) \, dW \\
 &=(I_1+I_2) \, dt + I_3 \, dW.
\end{aligned}
\end{equation}
Using the assumption \eqref{HGP} and Young's inequality, we obtain
$$|(B(u),Au)|\leq C||u||^a|Au|^b\leq C_1||u||^{2+r}+\frac{1}{2}|Au|^2,$$
where $r=\frac{2(a+b-2)}{2-b}>0$, combining with the assumption \eqref{G-H2} we find
\begin{equation}\label{G2}
    \begin{aligned}
    I_1 &\leq \la \left(||u||^2+\eta\right)^{\frac{\la-2}{2}}\left\{-|Au|^2+C_1||u||^{2+r}+\frac{1}{2}|Au|^2+\frac{\al^2}{2}||u||^2\right\} \\
    & \leq \la \left(||u||^2+\eta\right)^{\frac{\la-2}{2}} ||u||^2 \left\{C_1||u||^r+\frac{\al^2}{2}\right\}.
    \end{aligned}
\end{equation}
Using $\la \in (0,1)$ and the hypothesis \eqref{G-H1} we control $I_2$ as the following:
\begin{equation}\label{G3}
    \begin{aligned}
    I_2 &\leq \frac{\la(\la-2)}{2}\left(||u||^2+\eta\right)^{\frac{\la-4}{2}}\be^2||u||^4 \\
    &=\frac{\la(\la-2)}{2}\left(||u||^2+\eta\right)^{\frac{\la-2}{2}}\be^2||u||^2-\frac{\la(\la-2)}{2}\left(||u||^2+\eta\right)^{\frac{\la-4}{2}}\be^2||u||^2\eta  \\
    &\leq \frac{\la(\la-2)}{2}(||u||^2+\eta)^{\frac{\la-2}{2}}\be^2||u||^2-\frac{\la(\la-2)}{2}\be^2 \eta^{\frac{\la}{2}}.
    \end{aligned}
\end{equation}
The term $I_3$ can be handled by using Lemma \ref{BDG}, more specifically, for any stopping time $\si$ such that $\si<\tau^M$ on the set $\{\tau^M<\infty \}$, we have
\begin{equation}\label{G4}
    \begin{aligned}
    & \mathbb{E}\sup_{s\in [0,t\land \si]}\left|\int_0^sI_3\,dW\right| \\
    &\leq C \mathbb{E}\left(\int_0^{t\land \si}\la^2\left(||u||^2+\eta\right)^{\la-2}\left|\left|((G(u),u))\right|\right|_{L_2(\mathcal{H},\mathbb{R})}^2ds\right)^{\frac{1}{2}} \\
    &\leq C\mathbb{E}\left(\sup_{s\in [0,t\land \si]}\left(||u||^2+\eta\right)^{\frac{\la}{2}}\int_0^{t\land \si}\la^2\left(||u||^2+\eta\right)^{\frac{\la-4}{2}}\left|\left|((G(u),u))\right|\right|_{L_2(\mathcal{H};\mathbb{R})}^2ds\right)^{\frac{1}{2}} \\
    & \leq \frac{1}{2}\mathbb{E}\sup_{s\in [0,t\land \si]}\left(||u||^2+\eta\right)^{\frac{\la}{2}}+C\mathbb{E}\left(\int_0^{t\land \si}\la^2\left(||u||^2+\eta\right)^{\frac{\la-4}{2}}\al^2 ||u||^4 ds\right). 
    \end{aligned}
\end{equation}
Integrating \eqref{G1} from 0 to s, taking the supremum for $s\in [0,t\land \si]$, taking expectation and combining with \eqref{G2}-\eqref{G4}, we deduce that
\begin{equation}\label{G5}
    \begin{aligned}
    \mathbb{E} & \sup_{s\in [0,t\land \si]} \left(||u||^2+\eta\right)^{\frac{\la}{2}}  \leq \mathbb{E}\left(||u_0||^2+\eta\right)^{\frac{\la}{2}}-\frac{\la(\la-2)}{2}\be^2 \eta^{\frac{\la}{2}}t \\
    &+\mathbb{E}\int_0^{t\land \si}\la(||u||^2+\eta)^{\frac{\la-2}{2}}||u||^2\left\{C_1||u||^r+\frac{\al^2}{2}+\frac{\la-2}{2}\be^2+C_2\la \al^2 \right\} ds.
    \end{aligned}
\end{equation}
Let
$$\xi=\frac{2\be^2-\al^2}{4},\,\,\,\,\,\,\,\,\,\la=\frac{2\be^2-\al^2}{2\be^2+4C_2\al^2},$$
and define
$$\si_\xi':=\inf \left\{t>0: ||u(t)||\geq \left(\frac{\xi}{C_1}\right)^{\frac{1}{r}}\right\},$$
where $C_1$ and $C_2$ are the constants that occur on the right hand side of \eqref{G5}. From Lemma \ref{GP} we can  replace $\si$ in \eqref{G5} by $\si_\xi'$, note that at this time the integrand in the integral on the right hand side of \eqref{G5} is non-positive, thus we have
\begin{equation}\label{G6}
    \mathbb{E}\sup_{s\in [0,t\land \si_\xi']}\left(||u||^2+\eta\right)^{\frac{\la}{2}}\leq \mathbb{E}\left(||u_0||^2+\eta\right)^{\frac{\la}{2}}-\frac{\la(\la-2)}{2}\be^2\eta^{\frac{\la}{2}} t.
\end{equation}
Letting $\eta \rightarrow 0$ in \eqref{G6}, we obtain
\begin{equation}\label{G9}
    \mathbb{E}\sup_{s\in [t\land \si_\xi']}||u(s)||^\la \leq \mathbb{E}||u_0||^\la \leq (\mathbb{E}||u_0||)^\la,
\end{equation}
the second inequality holds due to $\la \in (0,1)$. Using Lemma \ref{GP}, Chybeshev's inequality and \eqref{G9}, we finally conclude that
\begin{equation}\label{G7}
\begin{aligned}
\mathbb{P}(\tau^M=\infty)
& \geq \mathbb{P}(\sigma_\xi'=\infty)  = \mathbb{P}\left(\bigcap_{n=1}^\infty\{\sigma_\xi' > n\}\right)  = \lim_{n \rightarrow \infty} \mathbb{P}\left(\{\sigma_\xi' > n\}\right) \\
& = \lim_{n \rightarrow \infty} \mathbb{P}\left(||u(\sigma_\xi' \land n)||^r <\frac{\xi}{C_1}\right)  \geq 1- \lim_{n \rightarrow \infty} \frac{\mathbb{E}||u(\sigma_\xi' \land n )||^\lambda}{\left(\frac{\xi  }{C_1} \right)^{\frac{\lambda}{r}}} \\
& \geq 1- \left(\frac{C_1^{\frac{1}{r}} \mathbb{E}||u_0||}{\xi
^{\frac{1}{r}}}\right)^\lambda = 1-\left(\frac{C_1 ^{\frac{1}{r}} \mathbb{E}||u_0||}{(\frac{2\beta^2-\alpha^2}{4} )^\frac{1}{r}}\right)^{\frac{2\beta^2-\alpha^2}{2\beta^2+4C_2 \alpha^2}}.
\end{aligned}
\end{equation}
Therefore, for any $0< \epsilon <1$, taking the constant $\delta=\delta(\epsilon)>0$ satisfying
\begin{equation}\label{G8}
   \left(\frac{C_1 ^{\frac{1}{r}} \delta}{(\frac{2\beta^2-\alpha^2}{4} )^\frac{1}{r}}\right)^{\frac{2\beta^2-\alpha^2}{2\beta^2+4C_2 \alpha^2}} = \epsilon,
\end{equation}
from \eqref{G7} we obtain that for any initial data $u_0$ satisfying $\mathbb{E}||u_0|| \leq \delta$, it holds that
\begin{equation}
    \mathbb{P}(\tau^M=\infty)  \geq 1-\epsilon,
\end{equation}
which completes the proof of Theorem \ref{GPT}. \qed

%Therefore we conclude that the pathwise solution is global in the sense of global in time in large probability and thus complete the proof  of theorem \ref{GPT}.

\section{Well-posedness of the stochastic Navier-Stokes equations}\label{NS}

In this section we use Theorems \ref{MPS} and \ref{GPT} to study the following initial boundary value problem for the stochastic Navier-Stokes equations in a bounded domain $D\subset \mathbb{R}^d$ (d=2,3):

%the Stochastic Navier-Stokes equation in a bounded domain $D\subset \mathbb{R}^d$ (d=2,3) with Dirichlet boundary condition:% that forced by a multiplicative white noise:
\begin{equation}\label{SNS}
    \left\{
    \begin{aligned}
       du+\left[(u\cdot \nabla) u + \nabla p\right] dt &= \nu \Delta u dt  +G(u)dW,\\
       \nabla \cdot u &=0,\\
       u(0) &=u_0,\\
       u|_{\partial D} &=0,
    \end{aligned}
\right.
\end{equation}
where $u=(u_1,\cdots,u_d)^T$, $p$ and $\nu$ denote the velocity, pressure and the coefficient of kinematic viscosity respectively.

Define
\begin{equation}
    H:=\{u \in L^2(D)^d : \nabla \cdot u =0 , \,\,\, u \cdot \mathop{n}\limits^{\rightarrow}=0 \,\,\, \mathrm{on} \,\,\, \partial D \},
\end{equation}
where $\mathop{n}\limits^{\rightarrow}$ is the outer unit normal to $\partial D$, and we take the $L^2$ inner product and norm on $H$ as
\begin{equation}
    (u,v):=\int_D u\cdot v dx, \,\,\,\,\,\,\,\,\, |u|:=\sqrt{(u,u)}.
\end{equation}
Define also
\begin{equation}
    V:=\{u\in H^1_0(D)^d : \nabla \cdot u =0\},
\end{equation}
the inner product and norm on which are chosen as
\begin{equation}\label{VN}
    ((u,v)):=\int_D \nabla u \cdot \nabla v dx, \,\,\,\,\,\,\,\,\, ||u||:=\sqrt{((u,u))}.
\end{equation}
Using the Dirichlet boundary condition $\eqref{SNS}_4$, we know that the Poincaré inequality
\begin{equation}
    |u| \leq C ||u||, \,\,\,\,\,\,\,\,\, \forall u \in V,
\end{equation}
holds, from which one can verify that $||\cdot||$ indeed a norm and is equivalent to the $H^1$ norm.

Denote the Leray projection by $P_H$, which is the orthogonal projection of $L^2(D)^d$ onto $H$. By acting $P_H$ on \eqref{SNS}, and noting the hypotheses on $G$, we can rewrite the problem \eqref{SNS} as follows:
\begin{equation}\label{SNS2}
    \left\{
    \begin{aligned}
       du+[\nu Au +B(u,u)]dt &=G(u)dW,\\
       u(0)&=u_0,
    \end{aligned}
\right.
\end{equation}
where $B(u,v)=P_H(u\cdot \nabla)v$, $Au=-P_H \Delta u$.

The operator $\nu A$ satisfies the hypotheses in Section \ref{HA} is standard, the detail can be found in \cite{C-F} or \cite{Temam}. In order to use Theorems \ref{MPS} and \ref{GPT}, we need to show that $B(\cdot , \cdot)$ satisfies the hypotheses \eqref{B1}-\eqref{HGP}.

\begin{lemma}\label{ABFN}
The bilinear term $B(u,v)=P_H (u\cdot \nabla)v$ is continuous from $V\times D(A)$ to $V'$ and is continuous from $D(A)\times D(A)$ to H. Further, we have
\begin{itemize}
    \item [(1)] $\langle B(u, v) , v \rangle =0 , \,\,\,\,\,\,\,\,\, \forall \,\,u\in V,\,\,v \in D(A),$
    \item [(2)] $|\langle B(u,v) , w \rangle| \leq C ||u||\,|Av|\,||w||,\,\,\,\,\,\,\,\,\, \forall \,\,u,w \in V ,\,\, v\in D(A),$
    \item [(3)] $|(B(u,v),w)|\leq C ||u||^{\frac{1}{2}} \, |Au|^{\frac{1}{2}} \, ||v||^{\frac{1}{2}} \, |Av|^{\frac{1}{2}} \, |w| , \,\,\,\,\,\,\,\,\, \forall u,v \in D(A),\,\,w \in H.$
\end{itemize}
\end{lemma}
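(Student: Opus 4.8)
The plan is to reduce all three assertions to the trilinear form
$b(u,v,w):=\int_D\big((u\cdot\nabla)v\big)\cdot w\,dx=\sum_{i,j=1}^d\int_D u_i(\partial_i v_j)w_j\,dx$.
Since $P_H$ is the orthogonal (hence self-adjoint) projection of $L^2(D)^d$ onto $H$, and both $H$ and $V$ sit inside the range of $P_H$, for every $w\in H$ one has $(B(u,v),w)=(P_H((u\cdot\nabla)v),w)=((u\cdot\nabla)v,w)=b(u,v,w)$, and likewise $\langle B(u,v),w\rangle=b(u,v,w)$ for $w\in V$. Thus items (1)--(3) are purely statements about $b$, and the two continuity claims will follow from (2) and (3) by bilinearity: estimate (2) gives $\|B(u,v)\|_{V'}\le C\|u\|\,|Av|$, hence continuity $V\times D(A)\to V'$, while estimate (3), after taking the supremum over $|w|\le 1$, gives $\|B(u,v)\|_{H}\le C\|u\|^{\frac12}|Au|^{\frac12}\|v\|^{\frac12}|Av|^{\frac12}$, hence continuity $D(A)\times D(A)\to H$.

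For (1), I would first pass to smooth divergence-free fields by density (recall $v\in D(A)\subset H^2\cap V$ vanishes on $\partial D$, and $u\in V$ is divergence free with zero trace), then compute $b(u,v,v)=\tfrac12\int_D(u\cdot\nabla)|v|^2\,dx=-\tfrac12\int_D(\nabla\cdot u)|v|^2\,dx+\tfrac12\int_{\partial D}(u\cdot\vec n)|v|^2\,dS=0$, both terms vanishing by $\nabla\cdot u=0$ and the boundary condition.

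For (2), I would apply H\"older with exponents $(4,2,4)$, namely $|b(u,v,w)|\le\|u\|_{L^4}\|\nabla v\|_{L^2}\|w\|_{L^4}$, then invoke the Sobolev embedding $H^1(D)\hookrightarrow L^4(D)$ (valid for $d\le 4$, in particular $d=2,3$) together with Poincar\'e to get $\|u\|_{L^4}\le C\|u\|$ and $\|w\|_{L^4}\le C\|w\|$, and finally $\|\nabla v\|_{L^2}=\|v\|\le\lambda_1^{-\frac12}|Av|$ (from $\|v\|^2=(Av,v)\le|Av||v|$). This yields $|b(u,v,w)|\le C\|u\|\,|Av|\,\|w\|$.

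The technical heart is (3), and the binding case is $d=3$. My plan is to bound $|b(u,v,w)|\le\|(u\cdot\nabla)v\|_{L^2}|w|$ and split by H\"older with exponents $(6,3)$, so that $\|(u\cdot\nabla)v\|_{L^2}\le\|u\|_{L^6}\|\nabla v\|_{L^3}$. For the first factor I would use $H^1\hookrightarrow L^6$ in $d=3$ to get $\|u\|_{L^6}\le C\|u\|$; for the second I would use the Gagliardo--Nirenberg inequality $\|f\|_{L^3}\le C\|f\|_{L^2}^{\frac12}\|\nabla f\|_{L^2}^{\frac12}$ with $f=\nabla v$, together with the Stokes elliptic regularity $\|\nabla^2 v\|_{L^2}\le C|Av|$ (i.e. $\|v\|_{H^2}\simeq|Av|$ on $D(A)$), obtaining $\|\nabla v\|_{L^3}\le C\|v\|^{\frac12}|Av|^{\frac12}$. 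Finally I would symmetrize the $u$-factor via $\|u\|\le\lambda_1^{-\frac14}\|u\|^{\frac12}|Au|^{\frac12}$ (again from $\|u\|\le\lambda_1^{-\frac12}|Au|$), producing exactly $|b(u,v,w)|\le C\|u\|^{\frac12}|Au|^{\frac12}\|v\|^{\frac12}|Av|^{\frac12}|w|$. In $d=2$ the same scheme goes through with Ladyzhenskaya's inequality replacing the three-dimensional ones and is strictly easier. I expect the main obstacle to be choosing the H\"older and interpolation exponents in three dimensions so that the powers land precisely on the symmetric form $\|\cdot\|^{\frac12}|A\cdot|^{\frac12}$; the regularity estimate $\|v\|_{H^2}\le C|Av|$ for the Stokes operator on the bounded domain $D$ is the auxiliary fact that must be invoked to convert $\|\nabla^2 v\|_{L^2}$ into $|Av|$.
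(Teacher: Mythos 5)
Your proof is correct, and its skeleton (reduce to the trilinear form $b(u,v,w)$, estimate by H\"older plus Sobolev embeddings on smooth divergence-free fields, conclude by density) matches the paper's. The genuine difference is in item (3): the paper inserts a constant function to apply H\"older with exponents $(12,3,2,12)$, then uses the fractional embeddings $H^{3/2}\hookrightarrow L^{12}$, $H^{1/2}\hookrightarrow L^{3}$ and the interpolations $\|u\|_{H^{3/2}}\leq \|u\|_{H^1}^{1/2}\|u\|_{H^2}^{1/2}$, $\|\nabla v\|_{H^{1/2}}\leq |\nabla v|^{1/2}\|\nabla v\|_{H^1}^{1/2}$, so that the half-derivatives land symmetrically on $u$ and $v$ from the start. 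You instead use the integer-order H\"older split $(6,3,2)$ with $H^1\hookrightarrow L^6$ and Gagliardo--Nirenberg, which first yields the \emph{stronger, asymmetric} bound $|(B(u,v),w)|\leq C\|u\|\,\|v\|^{1/2}|Av|^{1/2}|w|$ --- precisely the $d=3$ estimate \eqref{EB} of the paper's Lemma \ref{ABFN2} --- and then downgrade $\|u\|\leq \lambda_1^{-1/4}\|u\|^{1/2}|Au|^{1/2}$ to reach the symmetric form. Your route avoids fractional Sobolev spaces entirely and makes transparent that statement (3) is a consequence of the sharper Lemma \ref{ABFN2}; the paper's route keeps the two lemmas independent. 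Both arguments rest on the same nontrivial input, the Stokes regularity $\|v\|_{H^2}\leq C|Av|$ on $D(A)$. One small imprecision: on a bounded domain, since $f=\nabla v$ satisfies no boundary condition, the Gagliardo--Nirenberg inequality should be used in the form $\|f\|_{L^3}\leq C\|f\|_{L^2}^{1/2}\|f\|_{H^1}^{1/2}$ (full $H^1$ norm in the second factor) rather than with $\|\nabla f\|_{L^2}^{1/2}$ alone; this changes nothing downstream, as $\|\nabla v\|_{H^1}\leq \|v\|_{H^2}\leq C|Av|$.
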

\begin{proof}
We only show the case $d=3$, the proof for the case $d=2$ is similar. Fix any divergence free fields $u,v,w\in ( C_0^\infty (D) )^3$. The statement $(1)$ is obvious.

For statement $(2)$, using Hölder's inequality and Ladyzhenskaya's inequality, we obtain
\begin{equation}\label{NSB1}
\begin{aligned}
    |\langle B(u,v),w \rangle|
    & \leq \int_D |(u\cdot \nabla)v \cdot w| dx \\
    & \leq ||u||_{L^4} \, |\nabla v| \, ||w||_{L^4} \\
    & \leq C |u|^{\frac{1}{4}} \, ||u||^{\frac{3}{4}} \, |\nabla v| \,  |w|^{\frac{1}{4}} \, ||w||^{\frac{3}{4}} \\
    & \leq C ||u|| \, |Av| \, ||w||.
    \end{aligned}
\end{equation}

 For statement $(3)$, using Hölder's inequality, Sobolev inequality and the interpolation inequality, we have
\begin{equation}\label{NSB2}
\begin{aligned}
    |( B(u,v),w )|
    & \leq \int_D |(u\cdot \nabla)v \cdot w \cdot 1| dx \\
    & \leq C ||u||_{L^{12}}  ||\nabla v||_{L^3}   |w| \\
    & \leq C ||u||_{H^{\frac{3}{2}}}||\nabla v||_{H^{\frac{1}{2}}}|w| \\
    & \leq C ||u||_{H^1}^{\frac{1}{2}}||u||_{H^2}^{\frac{1}{2}} |\nabla v|^{\frac{1}{2}} ||\nabla v||_{H^1}^{\frac{1}{2}}|w|\\
    & \leq C ||u||^{\frac{1}{2}}|Au|^{\frac{1}{2}}||v||^{\frac{1}{2}}|Av|^{\frac{1}{2}}|w|.
    \end{aligned}
\end{equation}
From \eqref{NSB1} and \eqref{NSB2}, we conclude statements $(2)$ and $(3)$ by density argument.

\end{proof}

%Applying Theorem \ref{MPS} to the problem \eqref{SNS2} we know that if $G(u)$ satisfies the hypotheses \eqref{G-Bnd}-\eqref{G-Lip}, and the initial data $u_0$ belongs to $L^2(\Om,V)$, then there exists a unique maximal pathwise solution to the problem \eqref{SNS2}.

%the conditions impose on the noise term, \eqref{G-Bnd}-\eqref{G-Lip}, are satisfied and the initial data $u_0\in L^2(\Om,V)$, then there exists a unique maximal pathwise solution for SNS \eqref{SNS2} (or \eqref{SNS}).

\begin{lemma}\label{ABFN2}
The bilinear term $B(u,v)$ is also continuous from $V \times D(A)$ to $H$, and when $u\in V$, $v\in D(A)$, $w\in H$ we have
\begin{equation}\label{EB}
|(B(u,v),w)|\leq C \begin{cases}
  |u|^{\frac{1}{2}}||u||^{\frac{1}{2}}||v||^{\frac{1}{2}}|Av|^{\frac{1}{2}}|w|,\,\,\,\,\,\,\,\,\,\,\,\,\,d=2, \\
  ||u||\,||v||^{\frac{1}{2}}|Av|^{\frac{1}{2}}|w|,\,\,\,\,\,\,\,\,\,\,\,\,\,\,\,\,\,\,\,\,\,\,\,\,\,\,\, d=3.
  \end{cases}
  \end{equation}
In particular, for $u \in D(A)$,
\begin{equation}\label{EB1}
    |(B(u,u), Au)|\leq C ||u||^{\frac{3}{2}}|Au|^{\frac{3}{2}},
\end{equation}
holds for both $d=2$ and $d=3$.
\end{lemma}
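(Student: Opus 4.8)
The plan is to establish the dimension-dependent bound \eqref{EB} by combining Hölder's inequality with the appropriate Sobolev embedding and interpolation estimates, and then to deduce \eqref{EB1} by specializing to $u=v$ and testing against $w=Au$. As in the proof of Lemma \ref{ABFN}, I would first fix divergence free fields $u,v,w \in (C_0^\infty(D))^d$ and pass to the general case by a density argument at the end.

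For $d=3$, starting from $|(B(u,v),w)| \leq \int_D |(u\cdot \nabla)v\cdot w|\,dx$, I would apply Hölder's inequality with exponents $6,3,2$ to obtain $|(B(u,v),w)| \leq ||u||_{L^6}\,||\nabla v||_{L^3}\,|w|$. The Sobolev embedding $H^1 \hookrightarrow L^6$ controls $||u||_{L^6} \leq C||u||$, while the interpolation $||\nabla v||_{L^3} \leq |\nabla v|^{\frac{1}{2}}\,||\nabla v||_{L^6}^{\frac{1}{2}}$ together with $H^1\hookrightarrow L^6$ applied to $\nabla v$ gives $||\nabla v||_{L^3} \leq C||v||^{\frac{1}{2}}|Av|^{\frac{1}{2}}$; multiplying these produces the $d=3$ line of \eqref{EB}. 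For $d=2$, I would instead use Hölder with exponents $4,4,2$ and Ladyzhenskaya's inequality twice, namely $||u||_{L^4}\leq C|u|^{\frac{1}{2}}||u||^{\frac{1}{2}}$ and $||\nabla v||_{L^4} \leq C||v||^{\frac{1}{2}}|Av|^{\frac{1}{2}}$, the latter by applying Ladyzhenskaya to $\nabla v$ and identifying $||\nabla \nabla v||_{L^2}$ with $|Av|$; this yields the $d=2$ line.

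To obtain \eqref{EB1}, I would set $u=v\in D(A)$ and take $w=Au$, so that $|w|=|Au|$. In three dimensions the $d=3$ bound gives at once $|(B(u,u),Au)| \leq C||u||\,||u||^{\frac{1}{2}}|Au|^{\frac{1}{2}}|Au| = C||u||^{\frac{3}{2}}|Au|^{\frac{3}{2}}$. In two dimensions the $d=2$ bound yields $C|u|^{\frac{1}{2}}||u||\,|Au|^{\frac{3}{2}}$, and the final step is to invoke the Poincaré inequality $|u|\leq C||u||$ to absorb $|u|^{\frac{1}{2}}$ into $||u||^{\frac{1}{2}}$, giving the same bound $C||u||^{\frac{3}{2}}|Au|^{\frac{3}{2}}$ in both dimensions.

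I expect the estimates themselves to be routine; the point requiring the most care is the bookkeeping that identifies the second-order quantities $||\nabla v||_{L^6}$ (for $d=3$) and $||\nabla\nabla v||_{L^2}$ (for $d=2$) with $|Av|$ up to a constant, which rests on the elliptic regularity of the Stokes operator $A=-P_H\Delta$ and the equivalence of the graph norm of $A$ with the $H^2$ norm on $D(A)$. The Poincaré step is exactly what makes \eqref{EB1} hold uniformly in the dimension with the single pair of exponents $a=b=\frac{3}{2}$, thereby verifying the hypothesis \eqref{HGP} since $0<\frac{3}{2}<2$ and $\frac{3}{2}+\frac{3}{2}>2$.
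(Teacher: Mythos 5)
Your proposal is correct and follows essentially the same route as the paper: for $d=3$ the paper uses exactly your Hölder $(6,3,2)$ splitting, the embedding $H^1 \hookrightarrow L^6$, interpolation of $\|\nabla v\|_{L^3}$ between $L^2$ and $H^1$ (phrased there through $H^{\frac{1}{2}}$), and the identification $\|\nabla v\|_{H^1}\leq C|Av|$ from Stokes regularity. The paper omits the $d=2$ case and dismisses \eqref{EB1} as immediate, so your 2D Ladyzhenskaya argument and the Poincaré step used to pass from $|u|^{\frac{1}{2}}\|u\|\,|Au|^{\frac{3}{2}}$ to $\|u\|^{\frac{3}{2}}|Au|^{\frac{3}{2}}$ simply fill in details the authors leave to the reader, and they are correct in this Dirichlet setting.
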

\begin{proof}
We only show \eqref{EB} for d=3. Fix any divergence free fields $u,v,w\in ( C_0^\infty (D) )^3$, using Hölder's inequality, Sobolev inequality and the interpolation inequality, we have
\begin{equation}
    \begin{aligned}
    |(B(u,v),w)| & \leq ||u||_{L^6}||\nabla v||_{L^3} |w|      \\
    & \leq C ||u||_{H^1}||\nabla v||_{H^{\frac{1}{2}}} |w|         \\
    & \leq C ||u||\,||\nabla v||_{L^2}^{\frac{1}{2}}||\nabla v||_{H^1}^{\frac{1}{2}} |w|         \\
    & \leq C ||u||\,||v||^{\frac{1}{2}}|Av|^{\frac{1}{2}}|w|,
    \end{aligned}
\end{equation}
from which we obtain \eqref{EB} by density argument, and \eqref{EB1} follows immediately.

\end{proof}

Lemmas \ref{ABFN} and \ref{ABFN2} indicate that the hypotheses \eqref{B1}-\eqref{HGP} hold for the bilinear term of the Navier-Stokes equations, therefore, we obtain Theorem \ref{GPSSNS} by applying Theorems \ref{MPS} and \ref{GPT} directly to the problem \eqref{SNS2}.

%If we further assume that $G(u)$ satisfies the hypotheses \eqref{G-H1}-\eqref{G-H3}, thus by applying Theorem \ref{GPT} we know the pathwise solutions mentioned above is global in the sense of Definition \ref{DGPS}, which complete the proof of Theorem \ref{GPSSNS}.

\section{Global pathwise solutions for stochastic Navier-Stokes equations defined on torus}\label{NSM}
In this section, we study the problem \eqref{SNS} defined on the torus $\mathbb{T}^d$ with $d=2,3$. Denote by $X^m$ the divergence free subspace of the usual Sobolev space $H^m(\mathbb{T}^d)$, that is, $X^m=\{u \in H^m : \nabla \cdot u = 0 \}$. Assume that the initial data $u_0$ is a $X^m$-valued, $\mathcal{F}_0$-measurable random variable with $m>\frac{d}{2}+1$, and the noise term $G$ satisfies

  \begin{equation}\label{GPSm-H1}
        G\in Bnd_{u,loc}(H^{m+1},L_2(\mathcal{H},X^{m+1});L^\infty) \cap Bnd_{u,loc}(H^{m+3},L_2(\mathcal{H},X^{m+3});L^\infty),
    \end{equation}

   \begin{equation}\label{GPSm-H2}
        G\in Lip_{u,loc}(H^m,L_2(\mathcal{H},X^m);L^\infty) \cap Lip_{u,loc}(H^{m-1},L_2(\mathcal{H},X^{m-1});L^\infty),
    \end{equation}

 \begin{equation}\label{GPSm-H3}
    ||(G(u),u)_{H^m}||_{L_2{(\mathcal{H},\mathbb{R})}} ^2 \geq \beta ^2 ||u||_{H^m}^4,
\end{equation}
and
\begin{equation}\label{GPSm-H4}
    ||G(u)||_{L_2{(\mathcal{H},H^m)}} ^2 \leq \alpha^2 ||u||_{H^m}^2,
\end{equation}
for some constants $\al$ and $\be$ satisfying \eqref{G-H3}.

\subsection{Existence of maximal $H^m$ pathwise solutions}\label{MPSM}
As in Section \ref{LMPS}, we first consider the initial data which is almost surely bounded in the $W^{1,\infty}$-norm, specifically,
\begin{equation}\label{BI}
     ||u_0||_{W^{1,\infty}}\leq M,\,\,\,\,\,\,\,\,\,\,\mathbb{P}-\mathrm{a.s.},
\end{equation}
for some positive constant $M$. Inspired by \cite{TZ}, we consider another cut-off system:
\begin{equation}\label{CS2}
\left\{
\begin{aligned}
du+[\nu Au + \theta(||u||_{W^{1,\infty}})B(u,u)]dt &=\theta(||u||_{W^{1,\infty}}) G(u) dW,\\
u(0) &=u_0,
\end{aligned}
\right.
\end{equation}
where $\theta$ is defined in \eqref{CF} with $\ka>M$,  and the corresponding Galerkin system:
\begin{equation}\label{GS2}
\left\{
\begin{aligned}
 du^n+[\nu Au^n + \theta(||u^n||_{W^{1,\infty}})B^n(u^n,u^n)]dt &=\theta(||u^n||_{W^{1,\infty}})G^n(u^n) dW,\\
 u^n(0) &=P_nu_0,
\end{aligned}
\right.
\end{equation}
where $B^n(u,u)=P_nB(u,u)$ and $G^n(u)=P_nG(u)$. Denote the solution of \eqref{GS2} by $u^n$, the existence and uniqueness of which is standard, and also $u^n \in C \left([0,\infty); P_n X^{m+3} \right)$ almost surely, we refer to \cite{Flandoli, B-F-H} for further detail. Let's establish the following uniform estimates.

\begin{proposition}
     Let $\mathcal{S}=(\Om, \mathcal{F}, (\mathcal{F}_t)_{t \geq 0}, \mathbb{P}, W)$ be a fixed stochastic basis, assume that $m>\frac{d}{2}+1$, $u_0 \in L^p(\Omega,\mathcal{F}_0,\mathbb{P};X^{m+3})$ for some $p \geq 2$, satisfying \eqref{BI}, and G(u) satisfying the hypotheses \eqref{GPSm-H1}-\eqref{GPSm-H2}, then for any $T>0$, there is a constant $C=C(p,\ka,T,||u_0||)>0$, such that the approximate solutions $\{u^n\}_{n\in \mathbb{N}}$ satisfying
  \begin{equation}\label{UEM1}
    \sup_{n \in \mathbb{N}} \mathbb{E} \left( \sup_{t\in [0,T]}||u^n||^p_{H^{m+3}} + \int_0^T \nu ||u^n||^{p-2}_{H^{m+3}}||\nabla u||_{H^{m+3}}^2 ds \right) \leq C,
  \end{equation}
  and
  \begin{equation}\label{UEM2}
     \sup_{n \in \mathbb{N}} \mathbb{E}||u^n||^p_{W^{\alpha,p}([0,T];H^{m+2})} \leq C,
  \end{equation}
  where $\al \in (0,\frac{1}{2})$.
\end{proposition}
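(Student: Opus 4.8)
\emph{Proof strategy.} The plan is to apply the It\^o formula to $\|u^n\|^p_{H^{m+3}}$, to estimate the resulting drift and martingale terms using the cut-off together with the hypotheses \eqref{GPSm-H1}, and to close the estimate by the stochastic Gronwall inequality (Lemma \ref{SGL}); the bound \eqref{UEM2} will then follow from \eqref{UEM1} by splitting $u^n$ into its drift and stochastic parts and interpolating. Working in Fourier variables on $\mathbb{T}^d$, I would write the $H^{m+3}$ inner product through $\Lambda^{m+3}:=(I-\Delta)^{(m+3)/2}$ and apply It\^o to $\|u^n\|^p_{H^{m+3}}=|\Lambda^{m+3}u^n|^p$. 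Since $A=-\Delta$ (on divergence free fields) commutes with $\Lambda^{m+3}$ on the torus, this produces a coercive dissipation term $-p\nu\|u^n\|^{p-2}_{H^{m+3}}\|\nabla u^n\|^2_{H^{m+3}}$, a nonlinear drift term, the It\^o correction coming from $G$, and a stochastic integral.

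The main obstacle is the nonlinear term $\theta(\|u^n\|_{W^{1,\infty}})\langle B^n(u^n,u^n),u^n\rangle_{H^{m+3}}$. Using that $P_H$ and $P_n$ are self-adjoint and that $\Lambda^{m+3}u^n$ is again divergence free, I would reduce it to $\langle[\Lambda^{m+3},u^n\cdot\nabla]u^n,\Lambda^{m+3}u^n\rangle_{L^2}$, the top-order contribution $\langle u^n\cdot\nabla\Lambda^{m+3}u^n,\Lambda^{m+3}u^n\rangle$ vanishing by the divergence free cancellation. The commutator is then controlled by the Kato--Ponce estimate
\[
\left\|[\Lambda^{m+3},u^n\cdot\nabla]u^n\right\|_{L^2}\leq C\,\|\nabla u^n\|_{L^\infty}\,\|u^n\|_{H^{m+3}},
\]
and here the cut-off is decisive: on the support of $\theta(\|u^n\|_{W^{1,\infty}})$ one has $\|\nabla u^n\|_{L^\infty}\leq 2\kappa$, so the whole nonlinear drift is bounded by $C\kappa\,\|u^n\|^{p}_{H^{m+3}}$, a quantity amenable to Gronwall.

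For the stochastic contributions I would again exploit that $\|u^n\|_{L^\infty}\leq 2\kappa$ on $\mathrm{supp}\,\theta$, so that \eqref{GPSm-H1} yields $\theta^2\|G^n(u^n)\|^2_{L_2(\mathcal{H},H^{m+3})}\leq C(1+\|u^n\|^2_{H^{m+3}})$; this bounds both It\^o correction terms by $C(1+\|u^n\|^p_{H^{m+3}})$, and after the Burkholder--Davis--Gundy inequality (Lemma \ref{BDG}) and Young's inequality, used to absorb the supremum of $\|u^n\|^p_{H^{m+3}}$, the martingale term reduces to a time integral of the same form. Integrating in time, taking the supremum and then the expectation, and invoking the stochastic Gronwall inequality (Lemma \ref{SGL}), I would obtain \eqref{UEM1}. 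This part parallels the treatment of the $G$-terms in Lemma \ref{UEs}.

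Finally, for \eqref{UEM2} I would write $u^n=v^n+\int_0^\cdot\theta(\|u^n\|_{W^{1,\infty}})G^n(u^n)\,dW$, where $v^n$ carries the drift. The stochastic integral lies in $W^{\alpha,p}([0,T];H^{m+3})\hookrightarrow W^{\alpha,p}([0,T];H^{m+2})$ by the fractional regularity of stochastic integrals (Lemma \ref{SL}) together with the $X^{m+3}$-bound on $G$ from \eqref{GPSm-H1}. For $v^n$ one has $\partial_t v^n=-\nu Au^n-\theta B^n(u^n,u^n)$ with $Au^n\in H^{m+1}$ and $B^n(u^n,u^n)\in H^{m+2}$ (using that $H^{m+2}$ is an algebra, valid since $m+2>\frac{d}{2}$), hence $v^n\in W^{1,p}([0,T];H^{m+1})$; combined with $v^n\in L^p([0,T];H^{m+3})$ from \eqref{UEM1}, a standard interpolation between $L^p([0,T];H^{m+3})$ and $W^{1,p}([0,T];H^{m+1})$ gives $v^n\in W^{\alpha,p}([0,T];H^{m+2})$ for every $\alpha\in(0,\tfrac12)$, which yields \eqref{UEM2}. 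I expect the commutator estimate for the nonlinearity, and in particular the bookkeeping by which the cut-off controls $\|\nabla u^n\|_{L^\infty}$, to be the crux, while the remaining terms follow the pattern already established in Lemma \ref{UEs}.
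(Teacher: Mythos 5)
Your treatment of \eqref{UEM1} is essentially the paper's own proof: the paper applies the It\^o formula to $\|u^n\|^p_{H^{m+3}}$, bounds the nonlinear drift by the tame estimate \eqref{EFB2} of Lemma \ref{EFB} (which is exactly your Kato--Ponce commutator bound, so that the cut-off gives $\theta(\|u^n\|_{W^{1,\infty}})\|u^n\|_{W^{1,\infty}}\le 2\kappa$ and hence a bound $C\|u^n\|^p_{H^{m+3}}$), controls the It\^o corrections by \eqref{GPSm-H1}, and closes with the Burkholder--Davis--Gundy inequality, Young's inequality and Lemma \ref{SGL}. One technical point you skip: before you can absorb $\tfrac12\,\mathbb{E}\sup\|u^n\|^p_{H^{m+3}}$ and invoke Lemma \ref{SGL}, you must know these quantities are finite. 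The paper localizes by the stopping time $\tilde{\sigma}_\xi=\inf\{t>0:\sup_{s\in[0,t]}\|u^n\|_{H^{m+3}}\ge\xi\}$, proves the estimate on $[0,\tilde{\sigma}_\xi\wedge T]$ uniformly in $\xi$, and then lets $\xi\to\infty$ using $u^n\in C([0,\infty);P_nX^{m+3})$; this is routine but should be stated.

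The genuine gap is in your argument for \eqref{UEM2}. To place the drift $v^n$ in $W^{1,p}([0,T];H^{m+1})$ you bound $B^n(u^n,u^n)$ in $H^{m+2}$ by the algebra property, which gives $\|B^n(u^n,u^n)\|_{H^{m+2}}\le C\|u^n\|_{H^{m+2}}\|u^n\|_{H^{m+3}}\le C\|u^n\|^2_{H^{m+3}}$. This is \emph{quadratic} in $\|u^n\|_{H^{m+3}}$, so $\mathbb{E}\int_0^T\|\theta B^n(u^n,u^n)\|^p_{H^{m+1}}\,dt$ would require a $2p$-th moment estimate, i.e.\ \eqref{UEM1} with $2p$ in place of $p$. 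That is not available: the hypothesis is only $u_0\in L^p(\Omega;X^{m+3})$ (condition \eqref{BI} bounds the $W^{1,\infty}$-norm almost surely, not the $H^{m+3}$-norm), so \eqref{UEM1} holds for that $p$ only, and the algebra property discards precisely the information the cut-off provides. The repair --- and the paper's route --- is to keep the factor $\theta(\|u^n\|_{W^{1,\infty}})$ and use the tame estimate \eqref{EFB1} of Lemma \ref{EFB}: $\theta\,\|B(u^n,u^n)\|_{H^{m+2}}\le C\,\theta\left(\|u^n\|_{L^\infty}\|u^n\|_{H^{m+3}}+\|u^n\|_{H^{m+2}}\|u^n\|_{W^{1,\infty}}\right)\le C\kappa\,\|u^n\|_{H^{m+3}}$, which is linear in $\|u^n\|_{H^{m+3}}$ and hence $p$-integrable by \eqref{UEM1}. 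With this replacement, your decomposition of $u^n$ into drift plus stochastic integral, the bound on the stochastic part via Lemma \ref{SL} and \eqref{GPSm-H1}, and your interpolation between $L^p([0,T];H^{m+3})$ and $W^{1,p}([0,T];H^{m+1})$ all match the paper's (terse) conclusion of \eqref{UEM2}, and the proof closes.
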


\begin{proof}
Let $m_1=m+3$, the Itô formula for $||u^n||^p_{H^{m_1}}$ yields
\begin{equation}\label{uem1}
\begin{aligned}
   d||u^n||^p_{H^{m_1}}+ & \nu p ||u^n||_{H^{m_1}}^{p-2} (Au^n,u^n)_{H^{m_1}} dt  \\
 = &  -p ||u^n||^{p-2}_{H^{m_1}} \theta(||u^n||_{W^{1,\infty}}) (B^n(u^n,u^n),u^n)_{H^{m_1}} dt \\
&  + \frac{p(p-2)}{2} ||u^n||^{p-4}_{H^{m_1}} \theta^2(||u^n||_{W^{1,\infty}}) ||(G^n(u^n),u^n)_{H^{m_1}}||_{L_2(\mathcal{H},\mathbb{R})}^2 dt \\
&  +\frac{p}{2} ||u^n||^{p-2}_{H^{m_1}} \theta^2(||u^n||_{W^{1,\infty}}) ||G^n(u^n)||^2_{L_2(\mathcal{H},H^{m_1})} dt \\
&  + p||u^n||^{p-2}_{H^{m_1}} \theta(||u^n||_{W^{1,\infty}}) (G^n(u^n),u^n)_{H^{m_1}} dW \\
= &   (I_1+I_2+I_3)dt +I_4 dW.
\end{aligned}
\end{equation}
Using \eqref{EFB2}, we estimate the first term as the following:
\begin{equation}\label{uem2}
    \begin{aligned}
    |I_1| & \leq C \theta(||u^n||_{W^{1,\infty}}) ||u^n||^p_{H^{m_1}} ||u^n||_{W^{1,\infty}} \\
    & \leq C ||u^n||_{H^{m_1}}^p.
    \end{aligned}
\end{equation}
The hypothesis \eqref{GPSm-H1} implies
\begin{equation}\label{uem3}
    \begin{aligned}
    |I_2| + |I_3| & \leq C \theta^2(||u^n||_{W^{1,\infty}}) ||u^n||^{p-2}_{H^{m_1}} ||G(u^n)||_{L_2(\mathcal{H},H^{m_1})}^2 \\
    &  \leq C \theta^2(||u^n||_{W^{1,\infty}}) ||u^n||^{p-2}_{H^{m_1}} \beta^2(||u^n||_{L^\infty})\left(1+||u^n||_{H^{m_1}}^2\right) \\
    &  \leq C \beta^2(2\kappa) \left(1+||u^n||^p_{H^{m_1}}\right) \\
    &  \leq C \left(1+||u^n||^p_{H^{m_1}}\right).
    \end{aligned}
\end{equation}
To treat the stochastic term $I_4$, for $\xi>0$, introducing the stopping time
\begin{equation}
    \Tilde{\si}_\xi=\inf \{t > 0 : \sup_{s \in [0,t]}||u^n||_{H^{m_1}} \geq \xi\}.
\end{equation}
For any pair of stopping times $\tau_a$ and $\tau_b$, satisfying $0 \leq \tau_a \leq \tau_b \leq \Tilde{\si}_\xi \land T$, by using Lemma \ref{BDG} and the hypothesis \eqref{GPSm-H1}, we have
\begin{equation}\label{uem5}
    \begin{aligned}
   & \mathbb{E}\sup_{s\in[\tau_a,\tau_b]} \left|\int_{\tau_a}^s I_4 dW \right| \\& \leq C \mathbb{E} \left(\int_{\tau_a}^{\tau_b} p^2 ||u^n||^{2(p-2)} \theta^2(||u^n||_{W^{1,\infty}}) ||(G(u^n),u^n)_{H^{m_1}}||^2_{L_2(\mathcal{H},\mathbb{R})} ds \right)^{\frac{1}{2}}  \\
    & \leq C \mathbb{E} \left(\sup_{s \in [\tau_a,\tau_b]}||u^n||^{\frac{p}{2}}_{H^{m_1}}\left(\int_{\tau_a}^{\tau_b}||u^n||^{p-2}_{H^{m_1}}\theta^2(||u^n||_{W^{1,\infty}}) \beta^2(||u^n||_{L^\infty}) (1+||u^n||_{H^{m_1}})^2 ds\right)^{\frac{1}{2}}\right) \\
    & \leq \frac{1}{2} \mathbb{E}\left(\sup_{s\in [\tau_a,\tau_b]}||u^n||^p_{H^{m_1}}\right)+C\mathbb{E}\left(\int_{\tau_a}^{\tau_b}||u^n||^{p-2}_{H^{m_1}}\theta^2(||u^n||_{W^{1,\infty}}) \beta^2(||u^n||_{L^\infty}) (1+||u^n||_{H^{m_1}})^2 ds\right) \\
    & \leq \frac{1}{2} \mathbb{E}\left(\sup_{s\in [\tau_a,\tau_b]}||u^n||^p_{H^{m_1}}\right)+C\mathbb{E}\left(\int_{\tau_a}^{\tau_b} (1+||u^n||_{H^{m_1}}^p ) ds\right).
    \end{aligned}
\end{equation}
%  & \leq C(r) \mathbb{E}(\int_{\tau_a}^{\tau_b}||u^n||^{2(r-1)}_{H^m}) \theta^2(||u^n||_{W^{1,\infty}}) \beta^2(||u^n||_{L^\infty}) (1+||u^n||_{H^m})^2 ds )^{\frac{1}{2}}  \\
Thus integrating \eqref{uem1} from $\tau_a$ to $t$, and taking the supermum for $t\in [\tau_a,\tau_b]$, taking expectation, and using \eqref{uem2}-\eqref{uem5}, we conclude that
\begin{equation}\label{GF}
    \begin{aligned}
    & \mathbb{E} \left(\sup_{s\in[\tau_a,\tau_b]}||u^n||^p_{H^{m_1}} + \nu \int_{\tau_a}^{\tau_b} ||u^n||^{p-2}_{H^{m_1}}||\nabla u||_{H^{m_1}}^2 ds \right) \\
    & \leq C\mathbb{E}\left(||u^n(\tau_a)||^p_{H^{m_1}}+ \int_{\tau_a}^{\tau_b} (1+||u^n||_{H^{m_1}}^p ) \, ds. \right).
    \end{aligned}
\end{equation}
From \eqref{GF}, by applying Lemma \ref{SGL}, one may obtain
\begin{equation}
    \begin{aligned}
    & \mathbb{E}\left(\sup_{s\in[0,\Tilde{\si}_\xi \land T]}||u^n||^p_{H^{m_1}} + \nu \int_0^{\Tilde{\si}_\xi \land T} ||u^n||^{p-2}_{H^{m_1}}||\nabla u||_{H^{m_1}}^2 ds \right) \\
    & \leq C \mathbb{E}\left(||u^n(0)||^p_{H^{m_1}}+ \int_0^{\Tilde{\si}_\xi \land T} ds\right) \\
    & \leq C \mathbb{E}\left(||u(0)||^p_{H^{m_1}}+1\right) \\
    & \leq C.
    \end{aligned}
\end{equation}
It is clear that $\Tilde{\si}_\xi \rightarrow \infty$ as $\xi \rightarrow \infty$ since $u^n \in C([0,\infty) ; P_n X^{m_1})$, thus \eqref{UEM1} holds by letting $\xi \rightarrow \infty$.
From \eqref{UEM1} and the hypothesis \eqref{GPSm-H1}, we deduce
\begin{equation}
   \mathbb{E} \left|\left|\int_0^t \theta (||u^n||_{W^{1,\infty}}) G^n(u^n)dW \right|\right|^p_{W^{\al,p}([0,T],H^{m_1-1})} \leq C,
\end{equation}
from which and the estimate for the bilinear term in Lemma \ref{EFB}, we finally conclude \eqref{UEM2}.
\end{proof}

Performing the similar arguments as in Section \ref{LMPS} (see also \cite{TZ} for the treatment of the initial data), we can obtain that for any $X^m$-valued, $\mathcal{F}_0$-measurable random variable $u_0$, there exists a unique maximal pathwise solution, continuously evolving in $X^m$, to the problem \eqref{SNS} with the initial data $u_0$.

\subsection{Existence of global $H^m$ pathwise solutions}
In this subsection, we will complete the proof of Theorem \ref{GPSmT}.

\noindent \textbf{Proof of Theorem \ref{GPSmT}:}
The proof is similar to Theorem \ref{GPT}. Let $(u,\tau^M)$ be the maximal pathwise solution established in Subsection \ref{MPSM}, we will show that the probability of the set $\{ \tau^M = \infty \}$ is positive and uniformly about $\nu$.

Fix any $\eta>0$ and $\la \in (0,1) $, the Itô formula for $\left(||u||_{H^m}^2+\eta\right)^{\frac{\la}{2}}$ yields
\begin{equation}
    \begin{aligned}
    & d\left(||u||_{H^m}^2 +\eta\right)^{\frac{\la}{2}} \\
    &  = \la \left(||u||_{H^m}^2+\eta\right)^\frac{\la -2}{2} \left\{\left(-\nu Au -B(u,u),u\right)_{H^m}+\frac{1}{2} ||G(u)||_{L_2(\mathcal{H},H^m)}^2 \right\} dt \\
    &\,\,\,\,\,\,  + \frac{\la (\la -2)}{2} \left(||u||_{H^m}^2+\eta\right)^{\frac{\la -4}{2}} ||(G(u),u)_{H^m}||_{L_2(\mathcal{H}, \mathbb{R})} ^2 dt \\
    &\,\,\,\,\,\, + \la \left(||u||_{H^m}^2+\eta\right)^{\frac{\la -2}{2}} (G(u),u)_{H^m} dW  \\
    & : = (I_1+I_2) dt + I_3 dW.
    \end{aligned}
\end{equation}

Using \eqref{EFB2} and the embedding $H^m \subset W^{1, \infty}$, we have
\begin{equation}
    |(B(u,u),u)_{H^m}|\leq C||u||^3_{H^m},
\end{equation}
combining the hypothesis \eqref{GPSm-H4}, we estimate the first term as the following:
\begin{equation}
    |I_1| \leq \la (||u||_{H^m}^2+\eta)^{\frac{\la -2}{2}}||u||_{H^m}^2\left(C||u||_{H^m}+\frac{1}{2} \al^2\right).
\end{equation}

To treat the terms $I_2$ and $I_3$, we just need to change the $V$ norm in \eqref{G2}-\eqref{G3} to the $H^m$ norm. Consequently, we establish the estimate which is similar to \eqref{G4}, more precisely, for any stopping time $\si$ such that $\si<\tau^M$ on the set $\{\tau^M<\infty \}$, we obtain
\begin{equation}\label{GPSm1}
    \begin{aligned}
 \mathbb{E} & \left(||u(t\land \sigma||_{H^m}^2+\eta\right)^{\frac{\lambda}{2}}
 \leq \mathbb{E}\left(||u_0||_{H^m}^2+\eta\right)^{\frac{\lambda}{2}} -C \frac{\lambda (\lambda-2)}{2}\beta^2\eta^{\frac{\lambda}{2}}t \\
& +\mathbb{E}\int_0^{t \land \sigma} \lambda \left(||u||_{H^m}^2+\eta\right)^{\frac{\lambda-2}{2}}||u||_{H^m}^2 \left\{C_3 ||u||_{H^m}+\frac{\alpha^2}{2}+\frac{\lambda-2}{2}\beta^2+C_4 \lambda \alpha^2 \right\} ds.
\end{aligned}
\end{equation}

Similarly, let 
\begin{equation}
    \xi=\frac{2\beta^2-\alpha^2}{4},\,\,\,\,\,\,\,\,\,\lambda=\frac{2\beta^2-\alpha^2}{2\beta^2+4C_4 \alpha^2},
\end{equation}
and for $\xi>0$, define the stopping time
\begin{equation}
    \Bar{\si}_\xi := \inf \{t>0 :\sup_{s\in [0,t]}||u||_{H^m} \geq \frac{\xi}{C_3} \},
\end{equation}
where $C_3$ and $C_4$ are the constants that occur on the right hand side of \eqref{GPSm1}. It is clear that
\begin{equation}
    \Bar{\si}_\ga \leq \si_{\ga'}' < \tau^M,
\end{equation}
for any $\ga>0$ with $ \ga' = C_3 \left( \frac{\ga}{C_1} \right) ^{\frac{1}{r}}$ and almost surely $\om \in \{\tau^M < \infty \}$.
Therefore, we can  replace $\si$ in \eqref{GPSm1} by $\Bar{\si}_\xi$, note that at this time the third term on the right hand side of \eqref{GPSm1} is non-positive, we deduce
\begin{equation}\label{GPSm3}
    \mathbb{E} \left(||u(t\land \sigma||_{H^m}^2+\eta\right)^{\frac{\lambda}{2}}\leq \mathbb{E}\left(||u_0||_{H^m}^2+\eta\right)^{\frac{\lambda}{2}} -C \frac{\lambda (\lambda-2)}{2}\beta^2\eta^{\frac{\lambda}{2}}t.
\end{equation}
By passing $\eta \rightarrow 0$ in \eqref{GPSm3}, we have
\begin{equation}
    \mathbb{E}||u(t\land \sigma)||_{H^m}^\lambda \leq \mathbb{E}||u_0||_{H^m}^\lambda \leq (\mathbb{E}||u_0||_{H^m})^\lambda.
\end{equation}
Therefore, following the same procedure as in \eqref{G7}, we finally conclude that
\begin{equation}\label{GPSm2}
\begin{aligned}
\mathbb{P}(\tau^M=\infty)
\geq  1- \left(\frac{4C_3\mathbb{E}||u_0||_{H^m}}{2\beta^2-\alpha^2}\right)^{\frac{2\beta^2-\alpha^2}{2\beta^2+4C_4 \alpha^2}}.
\end{aligned}
\end{equation}
The proof of Theorem \ref{GPSmT} is complete.
\qed

\appendix

\section{Some estimates}

\numberwithin{equation}{section}

\setcounter{equation}{0}

%Next we show the proof of lemma \ref{GP}, which plays a crucial role in proving theorem \ref{GPT} and \ref{GPSmT}.

In this section, we give some estimates used in this paper. We give the proof of Lemma \ref{GP}, which plays a crucial role in the proof of the result for the global solutions. In addition, some properties of the bilinear term $B(\cdot , \cdot)$ for the Navier-Stokes equations have been presented.

\subsection{Proof of Lemma \ref{GP}}\label{PKL}

 Let $\{ \tau_n \}_{n\in \mathbb{N}}$ announces any finite time blow up. For any $t \geq 0 $ and almost surely $\omega \in  \Omega$, the Itô formula for $||u||^2$ yields
\begin{equation}\label{1}
    \begin{aligned}
    ||u(t\land \tau_n \land \si_\xi)||^2
    & =||u_0||^2-2\int_0^{t\land \tau_n \land \si_\xi }|Au|^2\, ds - 2\int_0^{t\land \tau_n \land \si_\xi } (B(u),Au) \, ds    \\
    & +\int_0^{t \land \tau_n \land \si_\xi} ||G(u)||_{L_2(\mathcal{H};V)}^2 \, ds + 2\int_0^{t \land \tau_n \land \si_\xi}((G(u),u)) \, dW.
    \end{aligned}
\end{equation}
Using the assumptions \eqref{HGP} and \eqref{G-H2}, we obtain
\begin{equation}\label{2}
    \begin{aligned}
    & 2 \int_0^{t\land \tau_n \land \si_\xi}|(B(u),Au)| \, ds +\int_0^{t \land \tau_n \land \si_\xi} ||G(u)||_{L_2(\mathcal{H};V)}^2 \, ds   \\
    & \leq \int_0^{t\land \tau_n \land \si_\xi}|Au|^2 \, ds +C\int_0^{t \land \tau_n \land \si_\xi}||u||^{2+r} \, ds +\int_0^{t \land \tau_n \land \si_\xi} ||G(u)||_{L_2(\mathcal{H};V)}^2 \, ds   \\
    &\leq \int_0^{t\land \tau_n \land \si_\xi}|Au|^2 \, ds+ C\xi^{2+r}t+C\al^2\xi^2t,
    \end{aligned}
\end{equation}
where $r=\frac{2(a+b-2)}{2-b}>0$. From \eqref{1} and \eqref{2} we have
\begin{equation}\label{3}
    \begin{aligned}
    \left|\int_0^{t \land \tau_n \land \si_\xi}((G(u),u)) \, dW\right|  & \geq \frac{1}{2}||u(t \land \tau_n \land \si_\xi)||^2 - \frac{1}{2}||u_0||^2 \\
    & +\frac{1}{2}\int_0^{t \land \tau_n \land \si_\xi}|Au|^2 \, ds -C\xi^{2+r}t-C\al^2\xi^2t .
    \end{aligned}
\end{equation}

For any $k,n \in \mathbb{N}^+$, define
$$A_k^n :=\{\tau_n\leq \si_\xi\} \cap \{\tau_n \leq k\}.$$
Thus for any fixed $k$ and almost surely $\omega \in A_k^n$, from the hypothesis \eqref{G-H2} there holds that
\begin{equation}
    \int_0^{\tau_n} ||((G(u),u))||_{L_2(\mathcal{H};\mathbb{R})}^2 \, ds \leq \al^2\xi^4k < n,
\end{equation}
provided $n$ large enough, which implies
\begin{equation}\label{6}
    \mathbb{P}\left(A_k^n \cap \left\{\int_0^{\tau_n} ||((G(u),u))||_{L_2(\mathcal{H};\mathbb{R})}^2 \, ds \geq n\right\}\right) = 0,
\end{equation}
for $n$ large enough.

On the other hand, for any fixed $k$ and almost surely $\omega \in A_k^n$, from \eqref{FBU} we know
\begin{equation}\label{4}
    \begin{aligned}
    \int_0^{k \land \tau_n \land \si_\xi}|Au|^2 \, ds & = \int_0^{\tau_n}|Au|^2 \, ds \\
    & = n - \sup_{t\in [0,\tau_n]}||u(t)||^2  \\
    & \geq n - \xi^2,
    \end{aligned}
\end{equation}
from which and \eqref{3} we infer that
\begin{equation}
    \left|\int_0^{k \land \tau_n \land \si_\xi}((G(u),u)) \, dW\right| \geq \frac{n}{2}-\xi^2-C(\xi,k)\geq \frac{n}{3} ,
\end{equation}
provided $n$ large enough. Thus by an inequality from \cite{RS} we have
\begin{equation}\label{5}
    \begin{aligned}
    \mathbb{P}(A_k^n) & =\mathbb{P}\left(A_k^n \cap \left\{\left|\int_0^{\tau_n}((G(u),u))dW\right| \geq \frac{n}{3}\right\}\right)  \\
    & \leq \frac{9}{n}+ \mathbb{P}\left(A_k^n \cap \left\{\int_0^{\tau_n} ||((G(u),u))||_{L_2(\mathcal{H};\mathbb{R})}^2 ds \geq n\right\}\right).
    \end{aligned}
\end{equation}
For any fixed $k$ and large enough $n$, from \eqref{6} and \eqref{5} we deduce
\begin{equation}\label{7}
    \mathbb{P}(A_k^n)\leq \frac{9}{n}.
\end{equation}

For any $k\in \mathbb{N}^+$, let
$$A_k=\{\tau^M \leq \si_\xi\} \cap \{\tau^M \leq k\},$$
thus for any $n\in \mathbb{N}^+$, it is obvious that
$$A_k \subset A_k^n,$$
since $\tau_n \leq \tau^M$. Letting $n\rightarrow \infty$ in \eqref{7}, we find
\begin{equation}\label{8}
    \mathbb{P}(A_k)=0.
\end{equation}
Note
$$\{\tau^M \leq \si_\xi\} \cap \{\tau^M < \infty \} = \bigcup_{k=1}^\infty A_k, $$
from which and \eqref{8} we conclude that
$$\mathbb{P}(\{\tau^M \leq \si_\xi\} \cap \{\tau^M < \infty\})=0. $$
The proof is complete.
\qed
\\

\subsection{Estimates on the bilinear term for the Navier-Stokes equations}

\begin{lemma}\label{EFB}
There exists a constant C depends on $m$ and $d$ such that:

$\mathrm{(1)}$ For $u \in X^m \cap L^ \infty $, $v \in X^{m+1} \cap W^{1,\infty} $,
\begin{equation}\label{EFB1}
    ||B(u,v)||_{H^m}\leq C (||u||_{L^\infty}||v||_{H^{m+1}}+||u||_{H^m}||v||_{W^{1,\infty}}).
\end{equation}

$\mathrm{(2)}$ For $u,v \in X^m \cap W^{1,\infty}$,
\begin{equation}\label{EFB2}
      |(B(u,v),v)_{H^m}|\leq C||v||_{H^m}(||u||_{W^{1,\infty}}||v||_{H^m}+||u||_{H^m}||v||_{W^{1,\infty}}).
\end{equation}

$\mathrm{(3)}$ If $m>\frac{d}{2}+1$, then for $u \in X^m$, $v \in X^{m-1}$,
\begin{equation}\label{EFB3}
    |(B(u,v),v)_{H^{m-1}}| \leq C ||u||_{H^m}||v||_{H^{m-1}}^2.
\end{equation}
\end{lemma}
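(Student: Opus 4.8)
The plan is to exploit three standard ingredients throughout: the Leray projector $P_H$ is a zeroth-order Fourier multiplier on the torus, hence bounded and self-adjoint on every $H^s(\mathbb{T}^d)$; the identity $(u\cdot\nabla)v=\mathrm{div}(u\otimes v)$ valid when $\nabla\cdot u=0$; and the Kato--Ponce (Moser) product and commutator estimates together with the divergence-free cancellation $((u\cdot\nabla)\phi,\phi)_{L^2}=0$. Writing $\Lambda=(1-\Delta)^{1/2}$, so that $\|\cdot\|_{H^s}=\|\Lambda^s\cdot\|_{L^2}$, I would reduce each statement to an estimate on $(u\cdot\nabla)v$, since $B(u,v)=P_H(u\cdot\nabla)v$ and, because $v$ is divergence free, $(B(u,v),w)_{H^s}=((u\cdot\nabla)v,w)_{H^s}$ whenever $w$ is divergence free.

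For \eqref{EFB1} I would apply the $H^m$-boundedness of $P_H$ and the Moser product estimate $\|(u\cdot\nabla)v\|_{H^m}\le C(\|u\|_{L^\infty}\|\nabla v\|_{H^m}+\|u\|_{H^m}\|\nabla v\|_{L^\infty})$, then bound $\|\nabla v\|_{H^m}\le\|v\|_{H^{m+1}}$ and $\|\nabla v\|_{L^\infty}\le\|v\|_{W^{1,\infty}}$. For \eqref{EFB2} I would write $(B(u,v),v)_{H^m}=(\Lambda^m((u\cdot\nabla)v),\Lambda^m v)_{L^2}$ and insert the commutator, obtaining $((u\cdot\nabla)\Lambda^m v,\Lambda^m v)_{L^2}+([\Lambda^m,u\cdot\nabla]v,\Lambda^m v)_{L^2}$. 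The first term vanishes by the divergence-free cancellation, since $((u\cdot\nabla)\Lambda^m v,\Lambda^m v)_{L^2}=-\tfrac12\int(\nabla\cdot u)|\Lambda^m v|^2\,dx=0$; the second is controlled by the Kato--Ponce commutator bound $\|[\Lambda^m,u\cdot\nabla]v\|_{L^2}\le C(\|\nabla u\|_{L^\infty}\|v\|_{H^m}+\|u\|_{H^m}\|\nabla v\|_{L^\infty})$ followed by Cauchy--Schwarz against $\Lambda^m v$. Here both $\|\nabla u\|_{L^\infty}$ and $\|\nabla v\|_{L^\infty}$ are finite under the hypothesis $u,v\in X^m\cap W^{1,\infty}$, so no loss of derivatives occurs and \eqref{EFB2} follows.

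The delicate case is \eqref{EFB3}, which I regard as the main obstacle: here $v$ lies only in $X^{m-1}$, so $\|\nabla v\|_{L^\infty}$ is no longer available and the naive commutator bound fails. I would again write $(B(u,v),v)_{H^{m-1}}=([\Lambda^{m-1},u\cdot\nabla]v,\Lambda^{m-1}v)_{L^2}$, the top transport term dropping out by cancellation, and then split the commutator by distributing the $m-1$ derivatives; every resulting term carries at least one derivative on $u$. The genuinely dangerous contribution is the one in which all $m-1$ derivatives fall on $u$, namely $\int(\Lambda^{m-1}u_j)(\partial_j v_i)(\Lambda^{m-1}v_i)\,dx$. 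I would control this by Hölder combined with Gagliardo--Nirenberg/Sobolev embedding rather than by an $L^\infty$ bound on $\nabla v$: placing $\Lambda^{m-1}v$ in $L^2$, it suffices to estimate $\|\Lambda^{m-1}u\|_{L^{p}}\|\nabla v\|_{L^{q}}$ with $1/p+1/q=1/2$, using $\Lambda^{m-1}u\in H^1$ and $\nabla v\in H^{m-2}$; the condition $m>\tfrac d2+1$ is exactly what makes a compatible pair $(p,q)$ available in both $d=2$ and $d=3$ (for instance $p=6$, $q=3$ when $d=3$). The remaining intermediate terms carry at most $m-2$ derivatives on $u$ and are handled by the standard Moser estimates, using $\|\nabla u\|_{L^\infty}\le C\|u\|_{H^m}$ and the embedding $H^{m-1}\hookrightarrow L^\infty$, both consequences of $m-1>\tfrac d2$. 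Collecting these bounds yields \eqref{EFB3}. The crux is thus the top-order term, where the extra derivative of $u$ must be traded against the Sobolev embedding to compensate for the missing $L^\infty$ control on $\nabla v$.
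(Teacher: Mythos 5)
Your proposal is correct and follows essentially the same route the paper relies on: the paper gives no argument of its own beyond noting $\nabla \cdot u = 0$ and citing Majda's book, and the proof there is exactly your combination of Moser/Kato--Ponce product and commutator estimates, the divergence-free cancellation of the top-order transport term, and Sobolev embedding under $m>\frac{d}{2}+1$ to absorb the term in \eqref{EFB3} where all derivatives fall on $u$. In effect you have supplied the details that the paper delegates to the reference.
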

\begin{proof}
Noting $\nabla \cdot u = 0 $, the proof can be easily completed, we refer to  \cite{Majda}.
\end{proof}

    \section{Several results in stochastic analysis and some compact embedding theorems}
 In this section, we collect several lemmas in stochastic analysis and some compact embedding theorems, which are used in the proof in this paper.  Firstly, we introduce an inequality which will be used frequently throughout this paper.
\begin{lemma}\label{BDG} (Burkholder-Davis-Gundy inequality, \cite[Proposition 2.3.8]{BDE})
Assume that $H$ is a separable Hilbert space, and $G$ is a predictable process lying in $L^2(\Omega;L^2_{loc}([0,\infty),L_2(\mathcal{H},H))$, then for any $p \geq 1$, there exists a constant $C=C(p)$ such that
\begin{equation}\label{BDGI}
    \mathbb{E}\left(\sup_{t\in [0,T]}\left|\left|\int_0^t GdW\right|\right|_X^p \right) \leq C \mathbb{E}\left(\int_0^T ||G||_{L_2(\mathcal{H},H)}^2ds\right)^{\frac{p}{2}}.
\end{equation}
\end{lemma}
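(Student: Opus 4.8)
This is the Burkholder--Davis--Gundy inequality for Hilbert-space-valued It\^o integrals; in the paper it is simply quoted from \cite{BDE}, and below I describe the route a self-contained proof would take. The guiding principle is to reduce the statement to two scalar processes attached to the continuous local martingale
\[
M_t:=\int_0^t G\,dW=\sum_{k\ge 1}\int_0^t Gh_k\,d\beta_k .
\]
First I would identify its quadratic variation: applying It\^o's formula to $\|M_t\|_H^2$ (on a finite-dimensional truncation and then passing to the limit) shows that $\|M_t\|_H^2-\langle M\rangle_t$ is a local martingale, where $\langle M\rangle_t:=\int_0^t\|G(s)\|_{L_2(\mathcal{H},H)}^2\,ds$ is precisely the increasing process appearing on the right-hand side of \eqref{BDGI}. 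After this identification the assertion reads $\mathbb{E}\sup_{t\le T}\|M_t\|_H^p\le C(p)\,\mathbb{E}\langle M\rangle_T^{p/2}$, an inequality between the two scalar quantities $\sup_{t\le T}\|M_t\|_H$ and $\langle M\rangle_T^{1/2}$.

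I would then split on the exponent. The case $p=2$ is immediate: $t\mapsto\|M_t\|_H$ is a nonnegative submartingale by Jensen's inequality, so Doob's maximal inequality together with the It\^o isometry $\mathbb{E}\|M_T\|_H^2=\mathbb{E}\langle M\rangle_T$ gives $\mathbb{E}\sup_{t\le T}\|M_t\|_H^2\le 4\,\mathbb{E}\langle M\rangle_T$. For a general exponent I would run Burkholder's good-$\lambda$ method, establishing the distributional comparison between $\sup_{t\le T}\|M_t\|_H$ and $\langle M\rangle_T^{1/2}$ via stopping-time arguments; because that method only manipulates the two scalar processes above, using the local martingale property of $\|M\|_H^2-\langle M\rangle$, it transfers verbatim from the real-valued case, so the Hilbert-space structure costs nothing beyond the quadratic-variation identification of the first step. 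For $p\ge 2$ one may alternatively apply It\^o's formula to $\|M_t\|_H^p$, bound the drift by Young's inequality, and absorb the supremum of the martingale term after a Lenglart domination step.

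The genuinely delicate range is $1\le p<2$, where no elementary moment comparison is available; this is exactly the Davis-type endpoint, and it forces one to rely on either the good-$\lambda$ inequality or Lenglart's domination lemma rather than on It\^o's formula alone. It is worth noting that throughout this paper the inequality is invoked precisely with $p=1$ (see \eqref{ue4}, \eqref{PU8} and \eqref{G4}), i.e.\ in this delicate endpoint form, which is one reason it is cleanest to quote the result from \cite{BDE} rather than to reprove it here.
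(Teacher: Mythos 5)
The paper offers no proof of this lemma at all: it is quoted verbatim from \cite[Proposition 2.3.8]{BDE} as part of an appendix collecting background results from stochastic analysis, so there is no argument of the paper to compare yours against, and your decision to treat it as a citation plus a proof sketch matches the paper's own treatment. Your sketch is a correct outline of the standard route --- identifying $\int_0^t\|G\|_{L_2(\mathcal{H},H)}^2\,ds$ as the quadratic variation via It\^o's formula applied to $\|M_t\|_H^2$, handling $p=2$ by Doob's inequality and the It\^o isometry, and invoking a good-$\lambda$/Lenglart argument for the Davis endpoint $p=1$, which, as you rightly observe, is exactly the exponent the paper uses in \eqref{ue4}, \eqref{PU8} and \eqref{G4}.
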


In order to show fractional Sobolev regularity in time for the stochastic integral, we need the following inequality given by Flandoli-Gatarek in \cite[Lemma 2.1]{FG}.
\begin{lemma}\label{SL}
Let $H$ be a separable Hilbert space. For any given $p\geq 2$, $\alpha \in [0,\frac{1}{2})$, and predictable process
$G\in L^p(\Omega\times [0,T];L_2(\mathcal{H},H))$, we have
\begin{equation}\label{SI}
    \mathbb{E}\left(\left|\left|\int_0^t GdW\right|\right|^p_{W^{\alpha,p}([0,T];H)}\right) \leq C(p,\alpha,T) \mathbb{E}\left(\int_0^T ||G||^p_{L_2(\mathcal{H},H)}ds\right) .
\end{equation}
\end{lemma}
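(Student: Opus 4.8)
The plan is to prove the estimate directly from the Burkholder--Davis--Gundy inequality (Lemma \ref{BDG}) together with a Fubini argument that exploits the positivity of the integrand. Write $M_t := \int_0^t G\,dW$ and recall that the fractional Sobolev norm splits as
\[
\|M\|_{W^{\alpha,p}([0,T];H)}^p = \int_0^T \|M_t\|_H^p\,dt + \int_0^T\!\!\int_0^T \frac{\|M_t-M_s\|_H^p}{|t-s|^{1+\alpha p}}\,dt\,ds,
\]
so it suffices to bound the expectations of the two terms separately.

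For the first term I use Tonelli's theorem to write $\mathbb{E}\int_0^T\|M_t\|^p\,dt = \int_0^T \mathbb{E}\|M_t\|^p\,dt$; for each fixed $t$, Lemma \ref{BDG} gives $\mathbb{E}\|M_t\|^p \le C\,\mathbb{E}\big(\int_0^t\|G\|_{L_2}^2\,dr\big)^{p/2}$, and Hölder's inequality in $r$ (with exponents $p/2$ and $p/(p-2)$) yields $\big(\int_0^t\|G\|^2\,dr\big)^{p/2} \le t^{\frac{p}{2}-1}\int_0^t\|G\|^p\,dr$. Integrating in $t\in[0,T]$ produces the bound $C(p,T)\,\mathbb{E}\int_0^T\|G\|^p\,dr$, with no restriction on $\alpha$.

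The heart of the proof is the Gagliardo seminorm. Again by Tonelli I move the expectation inside the double integral, and for $s<t$ I use $M_t-M_s=\int_s^t G\,dW$ together with Lemma \ref{BDG} and the same Hölder step to obtain
\[
\mathbb{E}\|M_t-M_s\|^p \le C\,\mathbb{E}\Big(\int_s^t\|G\|_{L_2}^2\,dr\Big)^{p/2} \le C\,|t-s|^{\frac{p}{2}-1}\,\mathbb{E}\int_s^t\|G(r)\|^p_{L_2}\,dr.
\]
The decisive move is to substitute this into the double integral and then interchange the order of integration, integrating out the variable $r$ last: since the integrand is nonnegative, Fubini reduces the seminorm bound to $C\,\mathbb{E}\int_0^T\|G(r)\|^p_{L_2}\,\kappa(r)\,dr$, where $\kappa(r)=\int_{\{\,s<r<t\,\}} |t-s|^{\frac{p}{2}-2-\alpha p}\,dt\,ds$.

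The main obstacle -- and the only place the hypothesis $\alpha<\tfrac12$ enters -- is to verify that $\sup_{r\in[0,T]}\kappa(r)<\infty$. Evaluating the inner $t$-integral and examining the resulting term $\int_0^r (r-s)^{\frac{p}{2}-1-\alpha p}\,ds$ shows convergence near the diagonal exactly when $\frac{p}{2}-1-\alpha p>-1$, i.e. when $\alpha<\tfrac12$; the companion term involving powers of $(T-s)$ is harmless. I emphasize that had I instead crudely bounded $\int_s^t\|G\|^p\,dr$ by $\int_0^T\|G\|^p\,dr$ before integrating the singular kernel, I would only recover the suboptimal range $\alpha<\tfrac12-\tfrac1p$; postponing the $r$-integration is therefore essential to reach the full range $\alpha<\tfrac12$. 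Combining the two estimates gives the claimed inequality with constant $C=C(p,\alpha,T)$.
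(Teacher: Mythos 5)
Your proof is correct and is essentially the classical Flandoli--Gatarek argument: the paper does not prove Lemma \ref{SL} at all but quotes it from \cite{FG} (Lemma 2.1), whose proof is exactly your combination of the Burkholder--Davis--Gundy inequality (Lemma \ref{BDG}), H\"older in the time variable to produce the factor $|t-s|^{\frac{p}{2}-1}$, and the Fubini interchange that integrates the singular kernel before the $r$-variable, yielding the full range $\alpha<\tfrac12$ rather than the suboptimal $\alpha<\tfrac12-\tfrac1p$, as you rightly emphasize. The only unremarked point is the borderline exponent $\alpha=\tfrac12-\tfrac1p$ (where $\tfrac{p}{2}-2-\alpha p=-1$ and the inner $t$-integral produces a logarithm instead of a power), but the logarithmic kernel is still integrable uniformly in $r$, so your bound $\sup_{r\in[0,T]}\kappa(r)<\infty$ persists and the argument goes through unchanged.
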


To use the stochastic compactness argument to obtain the martingale solutions, we need the following Prokhorov and Skorokhod theorems. 
\begin{lemma}\label{Prokhorov}
(Prokhorov theorem, \cite[Theorem 5.1, 5.2]{Bil})
Let $X$ be a Polish space and $\mathcal{M}$ be a collection of probability measures on $X$, then $\mathcal{M}$ is tight if and only if it is relatively weakly compact.
\end{lemma}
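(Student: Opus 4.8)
The plan is to prove the two implications separately. I will work throughout with the weak topology on the space $P(X)$ of Borel probability measures; since $X$ is Polish this topology is metrizable (for instance by the L\'evy--Prokhorov metric), so relative compactness and relative sequential compactness coincide and I may argue with sequences. The three classical tools I would invoke are the Riesz representation theorem, the Banach--Alaoglu theorem, and the Portmanteau characterization of weak convergence. The substantive implication is that tightness forces relative weak compactness; the converse rests only on separability and completeness of $X$.

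For the direction tightness $\Rightarrow$ relative weak compactness I would first compactify. As $X$ is Polish it embeds homeomorphically into the Hilbert cube $[0,1]^{\mathbb{N}}$, and I set $Y$ to be the closure of its image, a compact metric space into which every $\mu \in \mathcal{M}$ pushes forward. Since $C(Y)$ is separable, the unit ball of its dual is sequentially weak-$*$ compact and metrizable, so $P(Y)$ is sequentially compact; given $\mu_n \in \mathcal{M}$ I extract $\mu_{n_k} \rightharpoonup \nu$ in $P(Y)$. The crucial step is to show that $\nu$ charges no mass outside $X$: tightness provides compacts $K_j \subset X$ with $\inf_{\mu \in \mathcal{M}} \mu(K_j) \ge 1 - 1/j$, and since each $K_j$ is closed in $Y$ the Portmanteau theorem yields $\nu(K_j) \ge \limsup_k \mu_{n_k}(K_j) \ge 1 - 1/j$, whence $\nu(X) = 1$ so $\nu \in P(X)$. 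A truncation argument, again driven by tightness and by the Tietze extension of bounded continuous functions on $X$ to $Y$, then upgrades weak-$*$ convergence on $Y$ to weak convergence in $P(X)$.

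For the converse, relative weak compactness $\Rightarrow$ tightness, I would fix $\varepsilon > 0$ and, for each $n$, cover $X$ by balls $B(x_i, 1/n)$, $i \ge 1$, using separability. The heart of the matter is the claim that for every $n$ there is a finite $N_n$ with $\inf_{\mu \in \mathcal{M}} \mu\bigl(\bigcup_{i \le N_n} B(x_i, 1/n)\bigr) > 1 - \varepsilon 2^{-n}$. I would prove this by contradiction: a failure yields measures $\mu_k$ for which every finite union has measure at most $1 - \varepsilon 2^{-n}$, and a weakly convergent subsequence $\mu_k \rightharpoonup \mu$ then contradicts, through the open-set half of Portmanteau together with countable additivity of the probability measure $\mu$, the fact that the $B(x_i,1/n)$ exhaust $X$. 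Taking $K = \bigcap_n \overline{\bigcup_{i \le N_n} B(x_i, 1/n)}$ produces a closed, totally bounded, hence (by completeness) compact set with $\mu(K) > 1 - \varepsilon$ uniformly over $\mathcal{M}$.

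I expect the first direction to be the main obstacle, and within it the identification of the weak-$*$ limit $\nu$ as a probability measure concentrated on $X$, together with the persistence of convergence against all bounded continuous functions on the non-compact space $X$. This is precisely the point where tightness cannot be dispensed with, and the Portmanteau inequalities for closed and open sets are the natural mechanism for transferring mass estimates between $X$ and its compactification $Y$.
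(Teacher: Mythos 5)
Your proof is correct, but there is nothing in the paper to compare it against: the paper states this lemma as a background result in the appendix and cites it directly to Billingsley (Theorems 5.1 and 5.2) without proof. Your argument is essentially the classical one from that cited source. For the forward (substantive) direction you use the homeomorphic embedding of the Polish space $X$ into the Hilbert cube, sequential weak-$*$ compactness of the probability measures on the compact closure $Y$ (Riesz representation plus Banach--Alaoglu plus separability of $C(Y)$), the closed-set Portmanteau inequality applied to the tightness compacts $K_j$ to show the limit $\nu$ carries full mass on $\bigcup_j K_j \subset X$, and a Tietze/truncation step to upgrade weak-$*$ convergence on $Y$ to weak convergence in $P(X)$; for the converse you run the standard covering argument, using separability to get finite unions of $1/n$-balls capturing mass $1-\varepsilon 2^{-n}$ uniformly (proved by contradiction via the open-set Portmanteau inequality and countable additivity) and completeness to conclude that the resulting closed totally bounded set is compact. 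Two small points you gloss, both harmless: the restriction of $\nu$ to $X$ is a genuine Borel probability measure on $X$ because $\nu$ is concentrated on the $\sigma$-compact set $\bigcup_j K_j$ (alternatively, the image of $X$ is $G_\delta$, hence Borel, in $Y$); and your reduction to sequences is legitimate because, as you note, the weak topology on $P(X)$ is metrizable when $X$ is Polish.
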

\begin{lemma}\label{Skorokhod}
(Skorokhod theorem, \cite[Theorem 11.7.2]{Dud})
Let $X$ be a Polish space and $\{ \mu_n \}_{n \in \mathbb{N}} $ be probability measures on $X$ such that $\mu_n$ converges weakly to $\mu$ as $n\to +\infty$. Then on some probability space, there exist $X$-value random variables $\{ u_n \}_{n \in \mathbb{N}} $ such that the law of $u_n$ is $\mu_n$ for each $n$, and $u_n(\omega) \rightarrow u(\omega)$ in $X$ a.s.
\end{lemma}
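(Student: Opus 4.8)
The statement is the Skorokhod representation theorem, so the plan is to realize the weakly convergent sequence $\mu_n\Rightarrow\mu$ as almost sure convergence on a single probability space. I would take the common space to be $([0,1],\mathcal B,\lambda)$ with Lebesgue measure, enlarged if necessary to support a countable independent family of uniform random variables (which $[0,1]$ does). As a warm-up that fixes the idea, in the scalar case $X=\mathbb R$ one sets $u_n(\omega)=F_n^{-1}(\omega)$ and $u(\omega)=F^{-1}(\omega)$ with $F_n,F$ the distribution functions and $F_n^{-1}$ the left-continuous quantile inverses; weak convergence gives $F_n\to F$ at every continuity point of $F$, hence $F_n^{-1}\to F^{-1}$ at every continuity point of $F^{-1}$, and the latter has at most countably many discontinuities, so $u_n\to u$ $\lambda$-a.e. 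The whole difficulty is to imitate this coupling in a general Polish space, where no monotone quantile inverse is available.

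First I would set up the geometry. Since $X$ is Polish it is separable and carries a complete metric $d$; by Ulam's theorem every Borel probability measure on $X$ is tight, and by Prokhorov (Lemma \ref{Prokhorov}) the family $\{\mu_n\}\cup\{\mu\}$ is tight. Using separability, for each $m\in\mathbb N$ I would construct a countable Borel partition $\mathcal P_m=\{A_{m,i}\}_i$ of $X$ into cells of diameter $<2^{-m}$ whose boundaries are $\mu$-null, $\mu(\partial A_{m,i})=0$; such partitions exist because around each point all but countably many radii give balls with $\mu$-null boundary, and one disjointifies a countable subcover. I would arrange the partitions to be nested, $\mathcal P_{m+1}$ refining $\mathcal P_m$, and (using tightness) to have only finitely many cells carrying all but $2^{-m}$ of the mass uniformly in $n$. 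Because each $A_{m,i}$ is a $\mu$-continuity set, the portmanteau theorem yields $\mu_n(A_{m,i})\to\mu(A_{m,i})$ for every $m,i$.

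Next I would build the random variables by the nested-cell construction. Using the conditional probabilities $\mu(A_{m+1,j}\mid A_{m,i})$ and the independent uniforms, I would assign to $\lambda$-almost every $\omega$ a decreasing chain of cells $C_1(\omega)\supset C_2(\omega)\supset\cdots$ with $C_m\in\mathcal P_m$ distributed according to $\mu$; since $\mathrm{diam}\,C_m<2^{-m}\to0$ and $d$ is complete, $\bigcap_m\overline{C_m}$ is a single point, which I define to be $u(\omega)$, and one checks $u$ has law $\mu$. For each $n$ I would repeat the construction with $\mu_n$ in place of $\mu$ to obtain $u_n$ of law $\mu_n$, but \emph{couple} the level-by-level cell choices to those of $u$ optimally: at each level the probability of a mismatch is controlled by the total variation $\tfrac12\sum_i|\mu_n(A_{m,i})-\mu(A_{m,i})|$ between the discretized measures, which tends to $0$ as $n\to\infty$ for each fixed $m$. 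On the event that $u_n$ and $u$ select the same cell of $\mathcal P_m$ they lie in a common set of diameter $<2^{-m}$, so $d(u_n,u)<2^{-m}$.

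Finally I would pass to almost sure convergence by a diagonal Borel–Cantelli argument: choosing a refinement level $m(n)\to\infty$ increasing slowly enough that the mismatch probabilities at level $m(n)$ are summable in $n$, Borel–Cantelli forces $u_n$ and $u$ into a common cell of $\mathcal P_{m(n)}$ for all large $n$ almost surely, whence $d(u_n,u)\le 2^{-m(n)}\to0$ a.s. The main obstacle is precisely this last bookkeeping together with the consistency of the couplings: one must construct all the conditional couplings simultaneously on the single space $[0,1]$ so that they are compatible across the levels $m$ and across $n$, and then tune the diagonal schedule $m(n)$ so that $\sum_n\mathbb P(C^n_{m(n)}\neq C_{m(n)})<\infty$ while still $m(n)\to\infty$; the construction of the nested, boundary-null, small-diameter partitions in an abstract Polish space (rather than $\mathbb R^d$) is the other place where separability and completeness must be used carefully.
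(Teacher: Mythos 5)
The paper offers no proof of this lemma: it is quoted directly from Dudley \cite[Theorem 11.7.2]{Dud}, so the benchmark is the classical textbook argument. Your setup coincides with it and is sound: tightness via Ulam/Prokhorov, nested countable partitions $\mathcal{P}_m$ of $X$ into cells of diameter $<2^{-m}$ with $\mu$-null boundaries (the ``all but countably many radii'' trick), and the portmanteau theorem giving $\mu_n(A_{m,i})\to\mu(A_{m,i})$ for each fixed cell. The gap is in your final step, which is precisely where the theorem's content lies. Write $\epsilon_n^{(m)}:=\tfrac12\sum_i|\mu_n(A_{m,i})-\mu(A_{m,i})|$ for the level-$m$ mismatch probability under your maximal couplings. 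Weak convergence gives $\epsilon_n^{(m)}\to 0$ for each fixed $m$ but with \emph{no rate}, and $\epsilon_n^{(m)}$ is nondecreasing in $m$ (refining a partition can only increase total variation), so $\epsilon_n^{(m(n))}\geq\epsilon_n^{(1)}$ for every schedule $m(n)$. Since $\epsilon_n^{(1)}$ may tend to zero as slowly as $1/\log n$, no choice of $m(n)\to\infty$ can make $\sum_n\epsilon_n^{(m(n))}<\infty$ in general, and the first Borel--Cantelli lemma gives nothing. The failure is not merely technical: if each $u_n$ is coupled to $u$ maximally using fresh independent uniforms, the mismatch events are independent across $n$, and by the \emph{second} Borel--Cantelli lemma non-summable mismatches occur infinitely often a.s. Your construction then yields only convergence in probability (hence a.s. convergence along a subsequence), which is strictly weaker than the statement.

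The repair, which is the actual mechanism in Dudley's and Billingsley's proofs, is to abandon level-wise maximal couplings and Borel--Cantelli entirely and exploit the order structure of $[0,1]$: partition $[0,1]$ into consecutive intervals of lengths $\mu_n(A_{m,i})$ (respectively $\mu(A_{m,i})$), nested across levels $m$, and define $u_n(\omega)$, $u(\omega)$ through the resulting shrinking chains of cells. Because $\mu_n(A_{m,i})\to\mu(A_{m,i})$, the interval \emph{endpoints} for $\mu_n$ converge to those for $\mu$; hence every $\omega$ outside the countable set of endpoints lies, for each fixed $m$, in the interval labelled by the \emph{same} cell $A_{m,i}$ for all sufficiently large $n$. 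This is a pointwise eventual-matching statement requiring no summability whatsoever: it yields $\limsup_n d(u_n(\omega),u(\omega))\leq 2^{1-m}$ for a.e.\ $\omega$ and every $m$, and intersecting over $m$ gives a.s. convergence. It also dissolves your acknowledged compatibility worry, since the nesting of the intervals across levels enforces consistency of the couplings automatically.
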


In order to pass the limit in the stochastic integral, we need the following lemma given in \cite[Lemma 2.1]{De-G}.
\begin{lemma}\label{SCL} 
Let $(\Omega,\mathcal{F},\mathbb{P})$ be a fixed probability space and $H$ be a separable Hilbert space. Consider a sequence of stochastic bases $\mathcal{S}_n=(\Omega, \mathcal{F},\{ \mathcal{F}_t^n \}_{t \geq 0}, \mathbb{P}, W_n)$, that is a sequence so that each $W_n$ is cylindrical Wiener process $(\mathrm{over}\,\,\mathcal{H})$ with respect to $\mathcal{F}_t^n$. Assume that $\{ G_n \}_{n \geq 1}$ are a collection of $H$-valued, $\mathcal{F}_t^n$ predictable processes such that $G_n \in L^2(0,T;L_2(\mathcal{H},H))$ almost surely. Finally consider $\mathcal{S}=(\Omega, \mathcal{F},\{ \mathcal{F}_t \}_{t \geq 0}, \mathbb{P}, W)$ and $G \in L^2(0,T;L_2(\mathcal{H},H))$, which is $\mathcal{F}_t$ predictable. If
$$  W_n\rightarrow W\,\,\,\,\,\, \mathrm{in}\,\,\, C([0,T];\mathcal{H}_0)\,\,\, \mathrm{in} \,\,\,\mathrm{probability},$$
$$ G_n\rightarrow G \,\,\,\,\,\,\mathrm{in} \,\,\,L^2(0,T;L_2(\mathcal{H},H)) \,\,\,\mathrm{in}\,\,\, \mathrm{probability},$$
then
$$    \int_0^t G_n dW_n \rightarrow \int_0^t G dW \,\,\,\,\,\,\mathrm{in}\,\,\, L^2(0,T;H)\,\,\, \mathrm{in}\,\,\, \mathrm{probability}.   $$
\end{lemma}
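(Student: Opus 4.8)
The plan is to reduce the statement, through a chain of approximations, to the case of a finite-dimensional noise integrated against a simple (piecewise-constant in time) integrand; for such data the stochastic integral is a genuine finite algebraic expression in the increments of $W_n$ and therefore converges under the uniform convergence $W_n\to W$. Two structural difficulties pervade the argument. First, the integrands $G_n$ and integrators $W_n$ are adapted to distinct filtrations $\{\mathcal{F}_t^n\}_{t\geq 0}$, so one cannot compare $\int_0^t G_n\,dW_n$ with $\int_0^t G\,dW$ by naively subtracting integrands against a common Wiener process. Second, $G_n$ lies in $L^2(0,T;L_2(\mathcal{H},H))$ only almost surely, so there is no global $L^2(\Om)$ bound and the It\^o isometry is unavailable before a localization.

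First I would invoke the subsequence principle for convergence in probability: it suffices to show that every subsequence admits a further subsequence along which $\int_0^\cdot G_n\,dW_n\to\int_0^\cdot G\,dW$ in $L^2(0,T;H)$ in probability. Passing to such a subsequence (not relabeled), both hypotheses may be upgraded to almost sure convergence, i.e. $W_n\to W$ in $C([0,T];\mathcal{H}_0)$ and $G_n\to G$ in $L^2(0,T;L_2(\mathcal{H},H))$ for a.e. $\om$; in particular each component satisfies $\beta_k^n\to\beta_k$ uniformly on $[0,T]$ a.s., since $\sup_t|\beta_k^n(t)-\beta_k(t)|^2/k^2\le\|W_n-W\|_{C([0,T];\mathcal{H}_0)}^2$. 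Next I would localize to recover an $L^2$ bound: for $R>0$ set $\tau_n^R:=\inf\{t:\int_0^t\|G_n\|_{L_2(\mathcal{H},H)}^2\,ds\geq R\}\wedge T$ and define $\tau^R$ analogously for $G$, which are $\mathcal{F}_t^n$- and $\mathcal{F}_t$-stopping times. Because the running integrals of $\|G_n\|^2$ converge uniformly in $t$ to those of $\|G\|^2$ a.s., one has $\mathbb{P}(\tau_n^R<T)\to 0$ as $R\to\infty$ uniformly in $n$; replacing $G_n,G$ by $\mathbb{I}_{[0,\tau_n^R]}G_n,\ \mathbb{I}_{[0,\tau^R]}G$ alters the integrals only on an event of arbitrarily small probability, and the truncated integrands are bounded by $R$ in $L^2(\Om\times[0,T];L_2(\mathcal{H},H))$. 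I may therefore assume such a uniform bound henceforth.

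With this bound the remaining errors are handled by the It\^o isometry and Lemma \ref{BDG} in three pieces. Truncating the noise to the first $K$ modes gives an error controlled by $\mathbb{E}\int_0^T\sum_{k>K}\|G_n h_k\|_H^2\,ds$, which is small uniformly in $n$ because, a.s., $\{G_n(\om,\cdot)\}$ is a convergent, hence relatively compact, family in $L^2(0,T;L_2(\mathcal{H},H))$, so its Hilbert–Schmidt tails vanish uniformly. Approximating the finite-mode integrand in time by the adapted average $G_n^\delta(t):=\delta^{-1}\int_{(t-\delta)\vee 0}^t G_n\,ds$ (which is automatically $\mathcal{F}_t^n$-predictable) yields, via the Fr\'echet–Kolmogorov characterization of the same compact family, $\sup_n\|G_n^\delta-G_n\|_{L^2(0,T;L_2)}\to 0$ a.s. as $\delta\to 0$, and a localized form of Lemma \ref{BDG} then makes the corresponding integral error uniformly small in probability. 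Finally, for the simple integrand the integral equals the finite sum $\sum_{i,\,k\le K}(G_{n,i}h_k)\big(\beta_k^n(t_{i+1}\wedge t)-\beta_k^n(t_i\wedge t)\big)$, which converges a.s. in $L^2(0,T;H)$ to the corresponding sum for $G$ because $\beta_k^n\to\beta_k$ uniformly and $G_n^\delta\to G^\delta$. Assembling the three estimates by the triangle inequality and letting $n\to\infty$, then $\delta\to 0$, $K\to\infty$ and $R\to\infty$ delivers the claim.

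The main obstacle is the uniform-in-$n$ control of the time-discretization error while respecting the distinct filtrations: the simple processes approximating $G_n$ must be $\mathcal{F}_t^n$-predictable for the discretized sum to coincide with the true It\^o integral against $W_n$, yet their $L^2$-error must be bounded independently of $n$. The key unlocking both requirements at once is the pathwise relative compactness of $\{G_n(\om,\cdot)\}$ in $L^2(0,T;L_2(\mathcal{H},H))$ supplied by the almost sure convergence $G_n\to G$: it furnishes, through Fr\'echet–Kolmogorov, equicontinuity of time-translations uniform in $n$, which is precisely what the adapted averaging scheme $G_n^\delta$ requires.
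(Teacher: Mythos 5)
The paper offers no proof of this lemma at all: it is quoted verbatim from \cite[Lemma 2.1]{De-G}, where the argument (going back to \cite{Bensoussan}) is precisely the one you reconstruct — pass to an a.s.\ convergent subsequence, localize by stopping times $\tau_n^R$ to restore square-integrability of the integrands, truncate the noise to finitely many modes, replace the integrand by an adapted time-regularization whose $L^2$-error is controlled \emph{uniformly in $n$} via the pathwise relative compactness of $\{G_n(\om,\cdot)\}$ in $L^2(0,T;L_2(\mathcal{H},H))$, and pass to the limit pathwise using the uniform convergence $W_n\to W$, with the error terms absorbed by a localized Burkholder/Lenglart estimate. Your proposal is correct and essentially identical to that standard proof; the only blemish is notational rather than substantive: your $G_n^\delta$ is a running average, hence continuous in $t$ and not a simple process, so the concluding finite-sum formula $\sum_{i,k\le K}(G_{n,i}h_k)(\beta_k^n(t_{i+1}\wedge t)-\beta_k^n(t_i\wedge t))$ should either be applied to the piecewise-constant sampled version of that average (still $\mathcal{F}_t^n$-predictable, with the same Fr\'echet--Kolmogorov error bound) or be replaced by a pathwise integration by parts $\int_0^t G_n^\delta\,dW_n^K=G_n^\delta(t)W_n^K(t)-\int_0^t(\partial_sG_n^\delta)W_n^K\,ds$; both repairs go through unchanged.
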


In order to pass from the martingale solutions to the pathwise solutions, we need a characteristic of convergence in the original probability space, which is stated as below.
\begin{lemma}\label{Gyongy-Krylov}
(Gyongy-Krylov lemma,\cite[Lemma 1.1]{GK})
Let $X$ be a Polish space equipped with the Borel $\sigma$-algebra. A sequence of $X$-valued random variables $\{u_n\}_{n\in \mathbb{N}}$ converges in probability if and only if for every sequence of joint laws of $\{ (u_{n_k},u_{m_k}) \}_{k\in \mathbb{N}}$ there exists a further subsequence which converges weakly to a probability measure $\mu$ such that
$$\mu\{(x,y)\in X\times X : x=y\}=1. $$
\end{lemma}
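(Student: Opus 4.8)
The plan is to establish the two implications separately, working with a fixed complete metric $d$ inducing the topology of the Polish space $X$, and recalling that convergence in probability of $X$-valued random variables is metrized by $\rho(u,v):=\mathbb{E}[d(u,v)\wedge 1]$; in particular $u_n$ converges in probability if and only if $(u_n)$ is Cauchy in probability, that is $\mathbb{P}(d(u_n,u_m)\geq \varepsilon)\to 0$ as $n,m\to\infty$ for every $\varepsilon>0$. Both directions will be reduced to this Cauchy characterization together with the Portmanteau theorem.

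The necessity ($\Rightarrow$) is the routine direction. If $u_n\to u$ in probability, then for any pair of subsequences $(u_{n_k},u_{m_k})$ one has $u_{n_k}\to u$ and $u_{m_k}\to u$ in probability, so the $X\times X$-valued variables $(u_{n_k},u_{m_k})$ converge in probability, and hence in law, to $(u,u)$. The limiting joint law is the image of $\mathbb{P}$ under $x\mapsto(u(x),u(x))$, which is supported on the diagonal $\Delta=\{(x,y):x=y\}$; since the whole sequence of joint laws converges weakly, a fortiori a further subsequence does, furnishing the required limit measure $\mu$.

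The sufficiency ($\Leftarrow$) is the heart of the matter, and I would argue by contradiction. Suppose $(u_n)$ is not Cauchy in probability. Negating the condition, there exist $\varepsilon>0$, $\delta>0$ and strictly increasing indices $n_k,m_k\to\infty$ with $\mathbb{P}(d(u_{n_k},u_{m_k})\geq \varepsilon)\geq\delta$ for all $k$. Let $\mu_k$ denote the law of $(u_{n_k},u_{m_k})$ on $X\times X$. By the hypothesis applied to this sequence of joint laws, some subsequence $\mu_{k_j}$ converges weakly to a probability measure $\mu$ concentrated on $\Delta$. The set $F=\{(x,y):d(x,y)\geq \varepsilon\}$ is closed and disjoint from $\Delta$, so $\mu(F)=0$; on the other hand, the Portmanteau theorem for closed sets gives $\mu(F)\geq\limsup_j\mu_{k_j}(F)\geq\delta>0$, a contradiction. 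Hence $(u_n)$ is Cauchy in probability.

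Finally, I would upgrade Cauchy in probability to genuine convergence in probability using completeness of $X$: extract a subsequence $u_{n_\ell}$ with $\mathbb{P}(d(u_{n_{\ell+1}},u_{n_\ell})\geq 2^{-\ell})\leq 2^{-\ell}$, apply the Borel--Cantelli lemma to conclude that $(u_{n_\ell})$ is almost surely Cauchy and therefore almost surely convergent to some limit $u$, and then verify that the full sequence converges to $u$ in probability by combining the Cauchy estimate $\rho(u_n,u_{n_\ell})\to 0$ with the subsequential limit. The only delicate points are the precise logical negation of the Cauchy-in-probability condition in the sufficiency step and the correct direction of the Portmanteau inequality for closed sets; these are where I would exercise the most care, the rest being standard measure-theoretic bookkeeping.
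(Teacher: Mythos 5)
Your proof is correct, but note that the paper itself offers no proof of this statement: it is quoted verbatim as Lemma \ref{Gyongy-Krylov} from Gy\"ongy--Krylov \cite[Lemma 1.1]{GK}, so there is no internal argument to compare against. Your sketch is essentially the standard (original) argument. The necessity direction is fine: coordinatewise convergence in probability gives convergence in probability of the pair $(u_{n_k},u_{m_k})$ in the product metric, hence convergence in law to the law of $(u,u)$, which is carried by the diagonal; and weak convergence of the full sequence trivially yields the required subsequential limit. The sufficiency direction is the substantive one, and you have it right: negating Cauchy-in-probability produces $\varepsilon,\delta>0$ and indices $n_k,m_k\to\infty$ with $\mathbb{P}(d(u_{n_k},u_{m_k})\geq\varepsilon)\geq\delta$; the hypothesis gives a weak subsequential limit $\mu$ with $\mu(\Delta)=1$, and since $F=\{(x,y):d(x,y)\geq\varepsilon\}$ is closed and disjoint from $\Delta$, the Portmanteau inequality $\limsup_j\mu_{k_j}(F)\leq\mu(F)=0$ contradicts $\mu_{k_j}(F)\geq\delta$ (you state the inequality in the correct direction). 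The final upgrade from Cauchy in probability to convergence in probability via a rapidly convergent subsequence, Borel--Cantelli, and completeness of $X$ is standard; the only point you leave implicit --- Borel measurability of the almost sure limit $u$ --- is automatic for Polish-space-valued maps, so nothing is missing.
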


We also need the following compact embedding results, see Theorems 2.1 and 2.2 in \cite{FG} for statements $(1)$ and $(2)$ respectively.
\begin{lemma}\label{EL}
$\mathrm{(1)}$ Suppose that $X\subset Y\subset Z$ are Banach spaces and $X$, $Z$ reflexive and the embedding $X$ into $Y$ is compact. Then for any $1<p<\infty$ and $0<\alpha<1$, the embedding
\begin{equation}
    L^p([0,T];X)\cap W^{\alpha,p}([0,T];Z)\subset \subset L^p([0,T];Y)
\end{equation}
is compact.

$\mathrm{(2)}$ Suppose that $X\subset Y$ are Banach spaces and the embedding is compact, let $1<p<\infty$ and $0<\alpha \leq 1$ such that $\alpha p >1$ then
\begin{equation}
    W^{\alpha,p}([0,T];X)\subset \subset C([0,T];Y)
\end{equation}
is compact.
\end{lemma}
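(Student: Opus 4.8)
The plan is to obtain both statements from the Aubin--Lions--Simon compactness principle in its fractional-in-time form, following the approach of \cite{FG}. Two analytic ingredients drive the argument. The first is Ehrling's interpolation inequality: since $X \hookrightarrow Y$ is compact and $Y \subset Z$ continuously, for every $\eta > 0$ there is a constant $C_\eta$ with
\[
\|v\|_Y \leq \eta \|v\|_X + C_\eta \|v\|_Z, \qquad \forall\, v \in X .
\]
The second is a time-translation estimate: $W^{\alpha,p}$ embeds into the Nikolskii space $B^{\alpha}_{p,\infty}$, so that for $u \in W^{\alpha,p}([0,T];Z)$ one has
\[
\|u(\cdot + h) - u\|_{L^p([0,T-h];Z)} \leq C\, h^{\alpha}\, \|u\|_{W^{\alpha,p}([0,T];Z)} .
\]

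For statement $(1)$ I would take an arbitrary bounded sequence $\{u_n\}$ in $L^p([0,T];X) \cap W^{\alpha,p}([0,T];Z)$ and verify Simon's characterization of relative compactness in $L^p([0,T];Y)$. Applying Ehrling's inequality pointwise to $u_n(t+h)-u_n(t)$ and integrating in $t$ yields
\[
\|u_n(\cdot+h) - u_n\|_{L^p([0,T-h];Y)} \leq C\eta\, \|u_n\|_{L^p([0,T];X)} + C_\eta\, \|u_n(\cdot+h)-u_n\|_{L^p([0,T-h];Z)} .
\]
As $\{u_n\}$ is bounded in $L^p([0,T];X)$, choosing $\eta$ small and then $h$ small (controlling the last term by the translation estimate) makes the left-hand side uniformly small, which is the required equicontinuity in time. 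The remaining condition is relative compactness in $Y$ of the averages $\int_{t_1}^{t_2} u_n\, dt$; these are bounded in $X$ and hence relatively compact in $Y$ by the compact embedding. Simon's criterion then produces a subsequence converging in $L^p([0,T];Y)$, the reflexivity of $X$ and $Z$ being available to identify the limit through weak compactness in $L^p([0,T];X)$.

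For statement $(2)$ the hypothesis $\alpha p > 1$ places us in the Morrey range of the fractional Sobolev embedding in the time variable, giving the continuous inclusion $W^{\alpha,p}([0,T];X) \hookrightarrow C^{0,\alpha - 1/p}([0,T];X)$. A bounded sequence in $W^{\alpha,p}([0,T];X)$ is therefore uniformly bounded in $X$ at each fixed time and uniformly Hölder-equicontinuous as an $X$-valued family. Since $X \hookrightarrow Y$ is compact, each pointwise set $\{u_n(t)\}_n$ is relatively compact in $Y$, and the uniform Hölder bound supplies equicontinuity in $Y$; the vector-valued Arzelà--Ascoli theorem then extracts a subsequence converging in $C([0,T];Y)$.

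The main obstacle lies in part $(1)$: converting fractional time regularity measured only in the weak space $Z$ into genuine precompactness in the intermediate space $Y$. The technical heart is the pair of estimates recorded above --- the Nikolskii-type translation bound with the sharp exponent $h^{\alpha}$, and the verification of Simon's compactness criterion for Banach-valued maps --- after which the Ehrling splitting is routine bookkeeping. Part $(2)$ is comparatively soft, its only delicate point being the justification of the fractional Morrey embedding in time for vector-valued functions.
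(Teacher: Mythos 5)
The paper does not prove Lemma \ref{EL} at all: both statements are quoted from the literature, with parts $(1)$ and $(2)$ attributed to Theorems 2.1 and 2.2 of \cite{FG} respectively. Your argument is correct, and it is essentially the standard proof underlying those cited results: part $(1)$ is Simon's compactness criterion in $L^p([0,T];Y)$ (relative compactness in $Y$ of the time averages, plus uniform smallness of the time translates), with the translates controlled by the Ehrling interpolation inequality together with the Nikolskii-type bound $\|u(\cdot+h)-u\|_{L^p([0,T-h];Z)} \leq C h^{\alpha}\|u\|_{W^{\alpha,p}([0,T];Z)}$; part $(2)$ is the vector-valued fractional Morrey embedding $W^{\alpha,p}([0,T];X)\subset C^{0,\alpha-1/p}([0,T];X)$ (valid for an arbitrary Banach space $X$, e.g.\ via the Garsia--Rodemich--Rumsey inequality) followed by the Arzel\`a--Ascoli theorem. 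Two remarks on your write-up. First, as your own proof shows, the route through Simon's criterion never uses the reflexivity of $X$ and $Z$; that hypothesis is an artifact of the formulation in \cite{FG}, so your argument is in fact slightly more general, and the closing sentence of your part $(1)$ about identifying the limit through weak compactness in $L^p([0,T];X)$ is superfluous --- Simon's criterion already yields a strongly convergent subsequence in $L^p([0,T];Y)$. Second, the translation estimate you treat as the technical heart needs no Besov-space machinery: averaging the triangle inequality $\|u(t+h)-u(t)\|_Z \leq \|u(t+h)-u(s)\|_Z + \|u(s)-u(t)\|_Z$ over $s \in [t,t+h]$ and using $|t-s|\leq h$ reduces it directly to the Gagliardo seminorm of $u$, giving the exponent $h^{\alpha}$ claimed.
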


Finally, we recall a Gronwall lemma for stochastic processes, given in \cite[Lemma 5.3]{G-H-Z}, which is used frequently throughout this paper.
\begin{lemma}\label{SGL}
Fix $T>0$. Assume that $X$,$Y$,$Z$,$R$ : $[0,T)\times \Om \rightarrow \mathbb{R}$ are real-valued, non-negative stochastic processes. Let $\tau < T$ be a stopping time so that
\begin{equation}
    \mathbb{E}\int_0^\tau (RX+Z) ds < \infty.
\end{equation}
Moreover, assume that
\begin{equation}
    \int_0^\tau R ds < M,\,\,\,\,\,\,\,\,\,\,a.s.,
\end{equation}
for some fixed constant $M$. For all stopping times $\tau_a$ and $\tau_b$ satisfying $0 \leq \tau_a \leq \tau_b \leq \tau $, if
\begin{equation}
    \mathbb{E} \left(\sup_{t\in [\tau_a,\tau_b]}X + \int_{\tau_a}^{\tau_b} Y ds \right) \leq C_0  \left( \mathbb{E}(X(\tau_a)+\int_{\tau_a}^{\tau_b}(RX+Z) ds \right),
\end{equation}
where $C_0$ is a constant independent of the choice of $\tau_a$ and $\tau_b$. Then
\begin{equation}
    \mathbb{E} \left(\sup_{t \in [0,\tau] } X + \int_o^\tau Y ds \right) \leq C \mathbb{E} \left( X(0)+\int_0^\tau Z ds \right),
\end{equation}
where $C=C(C_0,T,M)$.
\end{lemma}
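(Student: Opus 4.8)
The plan is to prove the estimate by a stopping-time partition argument that reduces the global bound to a deterministic, finite number of local applications of the hypothesis, on each of which the feedback term $\int RX\,ds$ is small enough to be absorbed into the left-hand side. Fix the threshold $\rho=\frac{1}{2C_0}$ (after enlarging $C_0$ if necessary so that $2C_0\ge 1$) and set $\tau_0=0$,
\[
\tau_{k+1}=\inf\Big\{t\ge \tau_k:\ \int_{\tau_k}^{t}R\,ds\ge \rho\Big\}\wedge \tau .
\]
Because $t\mapsto\int_0^t R\,ds$ is continuous, nondecreasing and adapted, each $\tau_k$ is an $\mathcal{F}_t$-stopping time with $\tau_k\le\tau_{k+1}\le\tau$. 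The decisive point is that the almost sure bound $\int_0^\tau R\,ds<M$ forces this partition to terminate after a deterministic number of steps: whenever $\tau_{k+1}<\tau$ one has $\int_{\tau_k}^{\tau_{k+1}}R\,ds=\rho$, so that $k\rho\le M$, and consequently $\tau_N=\tau$ almost surely once $N:=\lfloor M/\rho\rfloor+1=\lfloor 2C_0M\rfloor+1$.

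Next I would apply the standing hypothesis on each subinterval $[\tau_k,\tau_{k+1}]$, taking $\tau_a=\tau_k$ and $\tau_b=\tau_{k+1}$, and bound the feedback term by
\[
\int_{\tau_k}^{\tau_{k+1}}RX\,ds\le \Big(\sup_{t\in[\tau_k,\tau_{k+1}]}X\Big)\int_{\tau_k}^{\tau_{k+1}}R\,ds\le \rho\,\sup_{t\in[\tau_k,\tau_{k+1}]}X .
\]
With $\rho=\frac{1}{2C_0}$ the resulting contribution $C_0\rho\,\mathbb{E}\sup_{[\tau_k,\tau_{k+1}]}X=\tfrac12\mathbb{E}\sup_{[\tau_k,\tau_{k+1}]}X$ is absorbed on the left, leaving the clean per-interval inequality
\[
\mathbb{E}\Big(\sup_{t\in[\tau_k,\tau_{k+1}]}X+\int_{\tau_k}^{\tau_{k+1}}Y\,ds\Big)\le 2C_0\,\mathbb{E}X(\tau_k)+2C_0\,\mathbb{E}\int_{\tau_k}^{\tau_{k+1}}Z\,ds .
\]
In particular, writing $a_k=\mathbb{E}X(\tau_k)$ and $b_k=\mathbb{E}\int_{\tau_k}^{\tau_{k+1}}Z\,ds\ge 0$, this yields the linear recursion $a_{k+1}\le 2C_0\,a_k+2C_0\,b_k$, whose solution obeys $a_k\le (2C_0)^k\big(a_0+\mathbb{E}\int_0^\tau Z\,ds\big)$.

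Finally I would sum the per-interval inequalities over $k=0,\dots,N-1$, using $\sup_{[0,\tau]}X\le\sum_k\sup_{[\tau_k,\tau_{k+1}]}X$ and $\int_0^\tau Y\,ds=\sum_k\int_{\tau_k}^{\tau_{k+1}}Y\,ds$, together with the geometric bound on the $a_k$ and $a_0=\mathbb{E}X(0)$. Since $N=N(C_0,M)$ is deterministic and finite, the accumulated factor $\sum_{k=0}^{N-1}(2C_0)^{k}$ is finite and depends only on $C_0$ and $M$ (the parameter $T$ enters only harmlessly through the standing finiteness assumption $\mathbb{E}\int_0^\tau(RX+Z)\,ds<\infty$, which guarantees that every expectation above is finite), giving the asserted bound with $C=C(C_0,T,M)$. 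I expect the main obstacle to be precisely this bookkeeping that keeps the number of subintervals deterministic: one must exploit the pathwise bound $\int_0^\tau R\,ds<M$ rather than a mere bound in expectation, since otherwise the number of pieces needed to make $\int R\,ds\le\rho$ on each would be random and the geometric factor $(2C_0)^{N}$ could fail to be integrable; the measurability of the hitting times $\tau_{k+1}$ and the care needed to absorb $\tfrac12\mathbb{E}\sup X$ only after all quantities are known to be finite are the remaining technical points.
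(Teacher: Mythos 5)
Your proof is correct. Note that the paper itself does not prove this lemma at all---it is imported verbatim from \cite[Lemma 5.3]{G-H-Z}---and your argument (partitioning $[0,\tau]$ by the stopping times at which $\int R\,ds$ accumulates mass $\rho=1/(2C_0)$, absorbing the feedback term $C_0\rho\,\mathbb{E}\sup X=\tfrac12\mathbb{E}\sup X$ after noting finiteness, and iterating the resulting linear recursion over the deterministically bounded number $N\le\lfloor 2C_0M\rfloor+1$ of pieces) is essentially the standard proof given in that cited reference, so there is nothing to bridge between the two.
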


\vspace{.1in}
\par{\bf Acknowledgements.} This research was supported by National Natural Science Foundation of China under Grant Nos.12171317, 12331008, 12250710674 and 12161141004, and Shanghai Municipal Education Commission under grant 2021-01-07-00-02-E00087.

\end{document}